\begin{document}

\title{PFH spectral invariants on the two-sphere and the large scale geometry of Hofer's metric.}

\date{\today}
\author{Daniel Cristofaro-Gardiner, Vincent Humili\`ere, Sobhan Seyfaddini }
\maketitle

\begin{abstract} 
We resolve three longstanding questions 
related to the large scale geometry of the group of Hamiltonian diffeomorphisms of the two-sphere, equipped with Hofer's metric.
Namely: (1) we resolve the Kapovich-Polterovich question by showing that this group is not quasi-isometric to the real line; (2) more generally, we show that the kernel of Calabi over any proper open subset is unbounded; and (3) we show that the group of area and orientation preserving homeomorphisms of the two-sphere is not a simple group.   We also obtain, as a corollary, that the group of area-preserving diffeomorphisms of the open disc,  equipped with an area-form of finite area, is not perfect.  Central to all of our proofs are new sequences of spectral invariants over the two-sphere, defined via periodic Floer homology.
\end{abstract}

\tableofcontents

\bigskip

\section{Introduction}
It is a remarkable fact that  the group of Hamiltonian diffeomorphisms of a symplectic manifold admits a bi-invariant Finsler metric, known as {\bf Hofer's metric}.  The existence of such a metric on an infinite dimensional Lie group is highly unusual, due to the lack of compactness, and stands in contrast to the fact that a simple\footnote{$\Ham(M, \omega)$ is simple for closed $M$, by a theorem of Banyaga
 \cite{Banyaga}.} finite dimensional Lie group admits a bi-invariant Finsler metric only if it is compact; see \cite[Prop.\ 1.3.15]{Polterovich-Rosen}.

 \sloppy The theme of this article is the large-scale geometry of Hofer's metric, on $\Ham(\S^2, \omega)$, the Hamiltonian diffeomorphisms of the $2$--sphere\footnote{It is known that the group $\Ham(\S^2, \omega)$  is in fact the set of all diffeomorphisms of $\S^2$ which preserve the area form $\omega$.}.  Our 
 first result, Theorem \ref{theo:QI-ker-Cal}, settles two longstanding questions, presented below, about the quasi-isometry type of $\Ham(\S^2, \omega)$. 
The first of the two questions was posed by Kapovich and Polterovich in 2006.
 
\begin{question}\label{que:Kap-Pol}
Is $\Ham(\S^2, \omega)$ quasi-isometric to the real line $\R$? 
\end{question}

The second question is due to Polterovich and dates back to the 2000s.
To state it, consider a connected, proper open set $U \subset \S^2$ and  denote by $\Ham_U(\S^2, \omega)$ the subgroup of $\Ham(\S^2,\omega)$ consisting of Hamiltonian diffeomorphisms supported in $U$.  This subgroup carries a well-known group homomorphism called the \textbf{Calabi homomorphism}:
 \[\Cal:\Ham_U(\S^2,\omega)\to \R,\]
 whose definition we recall in Section \ref{sec:prelim_symp}; see Equation \eqref{eqn:def_calabi}. 
 
 \begin{question}\label{que:Ker_Cal}
 Suppose\footnote{ If $\area(U) > \frac 12 \area(\S^2)$, then the question is known to have an affirmative answer by Polterovich \cite{Polterovich98}.} that $\area(U) \le \frac 12 \area(\S^2)$. Is the kernel of $\Cal:\Ham_U(\S^2,\omega)\to \R$ an unbounded subset of $\Ham(\S^2, \omega)$? 
 \end{question}

    The Hofer geometry of the two-sphere has long remained mysterious, and these two basic questions have received much attention over the past years.  This is especially the case for Question \ref{que:Kap-Pol}, which appears as Problem 21 on the list of open problems of McDuff-Salamon \cite[Sec. 14.2]{McDuff-Salamon};  it is mentioned as one of the motivations behind the influential article of Polterovich and Shelukhin \cite[Sec.\ 1.3]{Pol-Shel}; and it is highlighted in several articles such as  \cite{Py, EPP, Kislev-Shelukhin, Brandenbursky-Shelukhin}. 

    We also continue the direction of research initiated in our recent article \cite{CGHS}.  In particular, we answer the following question from the 1980 article of Fathi \cite{fathi} on the algebraic structure of $\Homeo_0(\S^2, \omega)$, the group of all area and orientation preserving homeomorphisms of the 2-sphere\footnote{The group $\Homeo_0(\S^2, \omega)$ can alternatively be described as the connected component of the group of area preserving homeomorphisms of $\S^2$. For any transformation group, the simplicity question is only interesting for the component of the identity because it forms a normal subgroup of the larger group.}.  
\begin{question}\label{que:sphere}
Is the group $\Homeo_0(\S^2, \omega)$ simple? 
\end{question} 
 Although at first glance this question might appear unrelated to Hofer's geometry, we will see that the large scale geometry of Hofer's metric plays a crucial role in the solution.  The two-sphere is the only closed manifold for which the question of simplicity of the component of the identity in the group of volume-preserving homeomorphisms remained open; for other closed manifolds this was settled by Fathi in the late 1970s.

 \subsection{The large-scale geometry of the kernel of Calabi}
\label{sec:large}

Let $d_H$ denote the Hofer metric on  $\Ham(M, \omega)$, the group of Hamiltonian diffeomorphisms of a closed and connected symplectic manifold $(M, \omega)$; we will review the definition of $d_H$, and other basic notions from symplectic geometry, in Section \ref{sec:prelim_symp}.  

A fundamental notion in large-scale geometry is that of {\bf quasi-isometry},   which we now recall.  A {\bf quasi-isometric embedding} is a mapping $\Phi :(X_1, d_1)  \rightarrow (X_2, d_2)$ of metric spaces for which there exist constants $A \ge 1,  B \ge 0$ such that 
\begin{equation}\label{def:quasi_isom}
\frac{1}{A}d_1(x, y) -B \leq d_2(\Phi(x), \Phi(y)) \leq A \,d_1(x, y) + B.
\end{equation} 
The map $\Phi$, satisfying the above, is said to be a quasi-isometry if it is {\bf quasi-surjective}, i.e.\ if  there exists a constant $C>0$ such that every point in $X_2$ is within distance $C$ of the image $\Phi(X_1)$. 

The large-scale geometry  of Hofer's metric, on general symplectic manifolds, has been studied extensively ever since Hofer's discovery of the metric in 1990 \cite{Hofer-metric}; see for example  \cite{Ostrover, EP03, Usher13,  Py, Khanevsky09, Humiliere, Sey14, Khanevsky16, Pol-Shel, Alvarez-Gavela}.  
Usually, $(\Ham, d_H)$ is a ``large" metric space.   For example, it is conjectured to be always  unbounded, and this has been proven for many manifolds \cite{Lalonde-McDuff2, Polterovich98, Schwarz, Ostrover, EP03, McDuff10, Usher13}.  Moreover, Usher \cite{Usher13} has proven that, for a large class of manifolds, including closed surfaces of positive genus,\footnote{As observed in \cite{Py}, such results for surfaces of positive genus can be deduced from the arguments in \cite{Lalonde-McDuff2,  Polterovich98}.}  it admits a quasi-isometric embedding of infinite-dimensional normed vector spaces; see also Py's article \cite{Py}.    
 
Despite all the above progress, a famous case that has been difficult to understand is that of the two-sphere.  All that is known is that $\Ham(\S^2, \omega)$, and the subgroup $\Ham_U(\S^2,\omega)$, are unbounded and admit a quasi-isometric embedding of the real line $\R$; this was proven by Polterovich  \cite{Polterovich98}.  As for the kernel of $\Cal:\Ham_U(\S^2,\omega)\to \R$, with $\area(U) \le \frac 12 \area(\S^2)$, it is not even known if it is unbounded, i.e.\ whether it is quasi-isometric to the point.    It is our understanding that when Question~\ref{que:Kap-Pol} and Question~\ref{que:Ker_Cal} were posed, there were not even clear conjectures about what their answers should be. 

Our first point in the present work is that the kernel of Calabi is indeed rather big, which we illustrate in two different ways. 

\begin{theo}\label{theo:QI-ker-Cal}   Let $U\subset \S^2$, with $U\neq \S^2$.  Then:
\begin{enumerate}[(a)]
\item For any $n\in\N$, there exists a quasi-isometric embedding of $\R^n$ into $(\Ham(\S^2, \omega), d_H)$ whose image is contained in 
 the kernel of the Calabi homomorphism $\Cal:\Ham_U(\S^2,\omega)\to \R$. 
\item  The kernel of  $\Cal:\Ham_U(\S^2,\omega)\to \R$ is not coarsely proper. 
\end{enumerate}
\end{theo}

To review the terminology here, recall that a metric space $(X,d)$
is said to be {\bf coarsely proper} if there exists $R_0 >0$ such that every bounded subset of $(X,d)$ can be covered by finitely many balls of radius $R_0$; see \cite[Definition 3.D.10]{Cornulier-delaHarpe}.  Examples of coarsely proper spaces include the Euclidean space $\R^n$ or any bounded spaces ---  in particular, part (b) of Theorem~\ref{theo:QI-ker-Cal} resolves Question~\ref{que:Kap-Pol} and Question~\ref{que:Ker_Cal} --- but on the other hand, an infinite-dimensional Banach space is not coarsely proper.    Recall also that a {\bf quasi-flat} in a metric space $(X,d)$ is the image of a quasi-isometric embedding of $\R^n$; moreover,  the {\bf quasi-flat rank} of a metric space $(X,d)$ is the supremum, over all $n$, such that there exists a quasi-isometric embedding of $\R^n$ into $X$. Thus, part (a) of Theorem~\ref{theo:QI-ker-Cal} is equivalent to the statement that the metric space $(\Ham(\S^2, \omega), d_H)$ and  the subset given by the kernel of the Calabi homomorphism $\Cal:\Ham_U(\S^2,\omega)\to \R$  have infinite quasi-flat rank.   Now,  it is known that the quasi-flat rank of $\R^n$ is $n$ and so we see that part (a) of Theorem~\ref{theo:QI-ker-Cal} also answers Questions \ref{que:Kap-Pol} and \ref{que:Ker_Cal}.  In fact, we will see in Example~\ref{ex:main} below that Theorem \ref{theo:QI-ker-Cal} tells us quite a bit more about the quasi-isometry type of the metric spaces in question.

\begin{example}
\label{ex:main}
  Let $(G,d)$ be a finite dimensional connected Lie group, with a left invariant Finsler metric induced from a norm on its Lie algebra; we call such a $d$ a {\bf compatible metric.}  As was explained above, the existence of Hofer's metric dramatically contrasts the situation for finite dimensional Lie groups; one might hope that the large-scale geometry also sees this.  Indeed it is known that any such $(G,d)$ both has finite quasi-flat rank, and is coarsely proper.  So, our main theorem precludes this as a quasi-isometry type for $(\Ham(\S^2), d)$ or for the kernel of Calabi.  Similarly,  any finitely generated group, or more generally, any locally compact and compactly generated group (here we refer the reader to \cite{Cornulier-delaHarpe} for the precise definition) is coarsely proper, see  \cite[Proposition 3.D.29]{Cornulier-delaHarpe}.   It would be interesting to understand to what degree the quasi-isometry type of $\Ham(\S^2)$ is unique, for example whether it differs\footnote{We have learned in recent conversation with Polterovich that this question is wide open.} from that of $\Ham(S)$ for other surfaces $S$.    
\end{example}

\begin{remark}
\label{rmk:ps}
Contemporaneously with our work, Polterovich-Shelukhin have shown \cite{Pol-Shel2}, using very different methods, that there is an isometric embedding of the space of even compactly supported functions on $(-\frac18,\frac18)$ into $\Ham(\S^2,\omega).$  This clearly answers the Kapovich-Polterovich question and, moreover, implies that $\Ham(\S^2, \omega)$ is neither coarsely proper nor of finite quasi-flat rank.  It would be very interesting to relate our methods here to the methods in \cite{Pol-Shel2}.  
\end{remark}

\subsection{Non-simplicity of $\Homeo_0(\S^2, \omega)$}
   We turn now to continuous symplectic geometry.
   
   In our recent article \cite{CGHS}, we proved that the group of compactly supported area-preserving homeomorphisms of the disc is not simple.  Our next theorem settles the simplicity question for the sphere.  Recall that $\Homeo_0(\S^2, \omega)$ denotes the identity component in the group of area-preserving homeomorphisms of the two-sphere.

\begin{theo}\label{theo:non-simplicity}
  $\Homeo_0(\S^2, \omega)$ is not simple.
\end{theo}

In fact, as in our previous article \cite{CGHS}, this theorem implies a stronger statement by appealing to a beautiful argument of Epstein and Higman \cite{Epstein, Higman}.  Recall that a group is {\bf perfect} if it is equal to its commutator subgroup.

\begin{corol}\label{cor:non-perfect}
$\Homeo_0(\S^2, \omega)$ is not perfect.
\end{corol}

Theorem~\ref{theo:non-simplicity} answers a question of Fathi\footnote{In fact, Theorem \ref{theo:FHomeo_proper}, stated below, answers Fathi's question for all compact genus zero surfaces; see Remark \ref{rem:genus_zero_surfaces}.} from the 70s \cite[Appendix A.6]{fathi}, whose history we now briefly review.  The question of simplicity of groups of homeomorphisms and diffeomorphisms was studied extensively in the 50s, 60s, and 70s and is fairly well-understood in most scenarios.  However, area-preserving homeomorphisms of surfaces have remained mysterious.  For example, in the case of closed manifolds, the simplicity question  had been answered by the late 70s for all of the following groups: homeomorphisms, diffeomorphisms\footnote{We are considering $C^\infty$ diffeomorphims here. For $C^k$ diffeomorphisms, simplicity is known for all $k$ except when $k = \mathrm{dim(M)}+1$ which remains open to this date.},  volume-preserving diffeomorphisms and symplectomorphisms.  And in the case of volume preserving homeomorphisms it was answered by Fathi \cite{fathi} for every closed manifold other than the two sphere.  Fathi asked the aforementioned question answered by Theorem~\ref{theo:non-simplicity} in the work \cite{fathi}.

We remark that non-simplicity of $\Homeo_0(\S^2, \omega)$ is surprising as it stands in dramatic contrast to the fact that on closed simply connected manifolds, such as spheres of dimension greater than one, this is the only example of the ``usual" transformation groups known to be non-simple.  For example, it is known that for simply connected manifolds the identity component in any of the groups mentioned in the previous paragraph is  simple except, of course, in our case of area-preserving homeomorphisms of the sphere. 

The simplicity of the aforementioned groups was established through the works of a long list of mathematicians who studied  the question from the 30s to the late 70s. For a  
summary of the long history of the simplicity question, we refer the interested reader to \cite[Sec.\ 1]{CGHS}.

\subsubsection{The perfectness question for volume-preserving diffeomorphisms of $\R^n$}


We now explain an application to the study of the algebraic structure of diffeomorphism groups.

Let $\Omega$ be a volume form on $\R^n$ and denote by $\Diff(\R^n,\Omega)$ the group of all diffeomorphisms of $\R^n$ which preserves $\Omega$.    McDuff proved in 1980 that although $\Diff(\R^n,\Omega)$ is non-simple\footnote{In this case, non-simplicity follows from the fact that the  compactly supported volume-preserving diffeomorphisms form a proper normal subgroup.}, it is  always perfect for $n\geq 3$; see \cite{McDuff-perfectness-volume-preserving}.   There are two distinct cases of McDuff's theorem, namely the finite volume case, which is the same as the case of an open ball with its standard volume form, and the infinite volume case.   In both cases, however, the 2-dimensional case has remained open.   
Theorem \ref{theo:non-simplicity} allows us to settle this question in the finite area case.

\begin{corol}\label{non-perfectness-R2} Assume that $\int_{\R^2}\Omega<+\infty$. Then, $\Diff(\R^2,\Omega)$ is not perfect.  
\end{corol}
We prove this corollary in Section \ref{sec:non-perfectness-R2}.

\subsection{New spectral invariants} \label{sec:intro_tools}
 
We now discuss the main tools that we use and develop here for proving the above theorems.  We henceforth view $\S^2$ as the unit sphere in standard $\R^3$ and equip it with the symplectic form  $\omega := \frac{1}{4 \pi} d \theta \wedge dz,$ where $(\theta,z)$ are cylindrical coordinates.  Note that this gives the sphere a total area of $1$.

\subsubsection*{Periodic Floer homology and spectral invariants}

To prove our results we use a version of Floer homology for area-preserving diffeomorphisms called {\bf periodic Floer homology} (PFH) which was introduced by Hutchings  \cite{Hutchings-Sullivan-Dehntwist}; we will review PFH in Section \ref{sec:prelim_PFH}.  As will be reviewed in Section \ref{sec:newspec}, one can use PFH to define a collection of invariants of Hamiltonians on the sphere
$$c_{d,k} : C^\infty(\S^1 \times \S^2) \rightarrow \R$$
which are indexed by $d\in \N$ and $k \in \Z$ with $k$ having the same parity as $d$.  
 
We show in Section \ref{sec:newspec} that these invariants have various useful properties; see Proposition \ref{prop:more}.  In particular, we show that they can be used to define invariants 
\begin{gather*}c_{d,k} :  \widetilde{\Ham}(\S^2, \omega) \rightarrow \R, \\
c_d:  \widetilde{\Ham}(\S^2, \omega) \rightarrow \R,
\end{gather*}   
where $c_d := c_{d,-d}$, which are well-defined on the universal cover of $\Ham(\S^2, \omega)$.  Moreover, we show in Proposition \ref{prop:cdeven}, that if $d$ is even then $c_{d,k} :  \widetilde{\Ham}(\S^2, \omega) \rightarrow \R$ descends to $\Ham(\S^2, \omega)$ and so in particular we obtain
$$c_d :  \Ham(\S^2, \omega) \rightarrow \R,$$
defined for even $d$.  

\subsection*{Homogenization}

As is evident from the works of Entov-Polterovich \cite{EP03, EP09}, for the purposes of applications to Hofer's geometry, it is often beneficial to homogenize spectral invariants.  This is true in our work as well and, in fact, we prove Theorem \ref{theo:QI-ker-Cal} using the homogenizations of the invariants $c_d$ which we now introduce.  More precisely, we can define for $\varphi \in \Ham(\S^2, \omega)$, and for all $ d \in \N$, 

\begin{equation}\label{eq:mu_d}
 \mu_d(\varphi) := \limsup_{n \to \infty} \frac{ c_d(\tilde \varphi^n)}{n},
 \end{equation}
 where $\tilde \varphi \in \widetilde{\Ham}(\S^2, \omega)$ is any lift of $\varphi$; we show in Proposition \ref{prop:mu_d-well-defined} that the above $\limsup$ is well defined and that $\mu_d(\varphi)$ does not depend on the choice of $\tilde \varphi \in \Hamtilde(\S^2, \omega)$.  We also define the related invariant
$\zeta_d : C^{\infty}( \S^2) \rightarrow \R$ by
\begin{equation}\label{eqn:zeta_d_def}
\zeta_d(H):= \limsup_{n \to \infty}\frac{c_{d}(nH)}{n}.
\end{equation}
We will see that these two homogenized invariants are related by the formula 
$$\mu_d(\varphi^1_H) = \zeta_d(H) - d \int_{\S^2} H \, \omega.$$

A useful property of any $\mu_d$ is that it coincides with (a multiple of) the Calabi invariant for Hamiltonian diffeomorphisms with small supports.  More precisely, suppose that $\supp(\varphi)$,  the support of $\varphi \in \Ham(\S^2, \omega)$, is contained in a topological disc $D$ with $\area(D) < \frac{1}{d+1} $.  Then, the following equality holds
\begin{equation} \label{eq:energy_cap}
\frac{1}{d} \mu_d(\varphi) = -\Cal(\varphi).
\end{equation}

The above properties of $\mu_d, \zeta_d$ will be proven in Section \ref{sec:newspec}.

\begin{remark}
The properties of the $\mu_d$ are reminiscent of the {\bf Calabi quasimorphism} of Entov-Polterovich \cite{EP03}.  It is an open question whether $\Ham(\S^2, \omega)$ admits any Hofer continuous (homogeneous) quasimorphisms other than the one constructed by Entov-Polterovich.  We plan to investigate in future work whether the invariants $\mu_d$ are quasi-morphisms. 
\end{remark}

\subsection*{The Hofer Lipschitz property and monotone twists}
A critical fact which we will show, and which is at the heart of all applications to Hofer's geometry is the {\bf Hofer Lipschitz} property.  For the invariants $\mu_d$ this means that  the following holds: 
$$\vert \mu_d(\varphi) - \mu_d(\psi) \vert \leq C_d \, d_H(\varphi, \psi)$$ for all $\varphi, \psi \in \Ham(\S^2, \omega)$.  The Lipschitz constant  is $C_d = d$.  In particular, these invariants can be used to bound the Hofer distance from below.  

In view of the Hofer Lipschitz property, to prove our results, we will have to produce examples of Hamiltonian diffeomorphisms 
 whose invariants we can compute.  This will be done by studying  {\bf monotone twist Hamiltonians}, that is autonomous Hamiltonians $H : \S^2 \rightarrow \R$ of the form $$H(z, \theta) = \frac 12 h(z),$$  
where $h'\ge 0, h'' \ge 0, h(-1) = h'(-1) = 0$;  we developed a combinatorial model in our previous work \cite{CGHS} which can be used to compute the $c_d$ for Hamiltonians like this under the additional technical assumption that $h'(1) \in \N.$
For monotone twist Hamiltonians, the invariant $\zeta_d$ has a beautiful expression.

\begin{prop}\label{prop:zeta-d} For any  Hamiltonian  $H$ as above we have
\[\zeta_d(H)=\tfrac 12\sum_{i=1}^dh\left(-1+\frac{2i}{d+1}\right).\]
\end{prop}

In other words, $\zeta_d$ is the sum of the values of $H$ on $d$ equally distributed  horizontal circles.    
 We  learn from the above proposition that $\zeta_d(H)$ is at least as large as the value $H$ takes on each of the $d$ circles $\mathcal{C}_i = \{(z,\theta) : z = -1 + \frac{2i}{d+1}\}$, where $ i \in \{1, \ldots, d\}$.  This bears some resemblance to the notion of {\bf heaviness} of equators introduced in the works of Entov-Polterovich \cite{EP09}.  What is surprising is that the circles $\mathcal{C}_i$ are all displaceable for $d \ge 2$, while heaviness of a set, as defined in \cite{EP09}, implies that the set is not diplaceable by Hamiltonian diffeomorphisms.  Sensitivity to the  displaceable circles $\mathcal{C}_i$ is the 
distinguishing feature of our invariant $\mu_d, \zeta_d$ which powers our applications to the Hofer geometry of the kernel of Calabi.

 \subsection*{$C^0$ continuity and non-simplicity of $\Homeo_0(\S^2, \omega)$} 

 To prove Theorem \ref{theo:non-simplicity}, we need invariants which are continuous with respect to the $C^0$ topology.  The invariants $c_d$ and  $\mu_d$, while useful, are not in general $C^0$ continuous.  We remedy this by taking certain linear combinations of the $c_d$ to define $C^0$ continuous invariants
$$\eta_d :  \Ham(\S^2, \omega) \rightarrow \R.$$

Not only are these invariants $C^0$ continuous, but also they extend continuously to $\Homeo_0(\S^2, \omega)$.  Moreover, they are also Hofer Lipschitz.  We summarize the properties of the $\eta_d$ in Proposition \ref{prop:eta_d_properties}.   

\subsection{Relationship with previous work}
As mentioned above, in our previous work we used PFH to define spectral invariants for compactly supported area-preserving diffeomorphisms and homeomorphisms of the two-disc.  For all of the applications discussed here, we need to rework this theory over the two-sphere.  In the disc case, we could assume that the maps were generated by a Hamiltonian that vanishes near the boundary of the disc.  This is no longer possible, so new ideas are needed.

One idea here, familiar to specialists, see for example  \cite{EP03, Oh05, Schwarz}, is to attempt to work with mean-normalized Hamiltonians.  A careful analysis shows that this gives invariants which are well-defined on $\widetilde{\Ham}$; then, after homogenization as in the previous section, we can obtain invariants of $\Ham$.  These invariants would be enough to prove the theorems in \ref{sec:large}.  However, as stated above, they are not $C^0$ continuous, and so can not be used to study the algebraic structure of the homeomorphism group.  This is where the $\eta_d$, defined by taking a difference of spectral invariants, come in.  The crucial insight for this, which was initially surprising to us, is that the $c_d$ for even $d$ descend from $\widetilde{\Ham}$ to $\Ham$. 

\subsection*{Acknowledgments}
We thank Yves de Cornulier, Bertrand R\'emy, and Rich Schwartz for patiently responding to our questions concerning the beautiful subject of large-scale geometry.  We also thank Leonid Polterovich and Egor Shelukhin for very helpful correspondence concerning their work \cite{Pol-Shel2}, see Remark~\ref{rmk:ps}.  We thank Mohammed Abouzaid and Fr\'ed\'eric Le Roux for their comments on an earlier version of the article.  We also thank Dusa McDuff for very helpful communications about the perfectness question and Corollary \ref{non-perfectness-R2}. Lastly, we thank the anonymous referee for 
a careful reading and valuable remarks.

This article was written while DCG was at the Institute for Advanced Study, supported in part by the Minerva Research Foundation and the National Science Foundation.  DCG is extremely grateful to the institute for providing such a fantastic environment for conducting this research.  DCG also thanks the NSF for their support under agreement DMS $1711976$.  This project is an outgrowth of research that started in the summer of $2018$ when DCG was an ``FSMP Distinguished Professor" at the Institut Math\'{e}matiques de Jussieu-Paris Rive Gauche (IMJ-PRG). DCG is grateful to the Fondation Sciences Math\'{e}matiques de Paris (FSMP) and IMJ-PRG for their support.
 
This project has received funding from the European Research Council (ERC) under the European Union’s Horizon 2020 research and innovation program (grant agreement No. 851701) and from Agence Nationale de la Recherche (ANR project ``Microlocal'' ANR-15-CE40-0007).

 \section{Preliminaries}\label{sec:prelim}
 In this section we fix our notation and  introduce the necessary background on symplectic geometry and periodic Floer homology.
 
 \subsection{Recollections}\label{sec:prelim_symp}

Here we recall some basic facts about symplectic geometry and the Hofer distance.

Let $(M,\omega)$ be a symplectic manifold.   Let $H \in C^{\infty}(\S^1 \times M)$ be a Hamiltonian; if $M$ happens to be non-compact, then we consider only compactly supported Hamiltonians.  We can think of such  $H$ as a family of functions $H_t$ on $M$, depending on time; we think of $\S^1$ as parametrized by $0 \le t \le 1$.  Such $H$ gives rise to a possibly time-varying vector field $X_{H_t}$ on $M$, called the {\bf Hamiltonian vector field}, defined by 
\[ \omega(X_{H_t},\cdot) = d H_t.\]
The flow of $X_{H_t}$ is called the {\bf Hamiltonian flow} and is denoted $\varphi_H^t$.  The set of time-$1$ maps of Hamiltonian flows is called the set of {\bf Hamiltonian diffeomorphisms} of $M$ and denoted $\Ham(M,\omega);$ it forms a subgroup of the symplectomorphisms of $(M, \omega)$.   We can define the {\bf Hofer norm} $||\varphi||$ of any $\varphi \in \Ham(M,\omega)$ as follows.  First, to a Hamiltonian $H \in C^{\infty}(\S^1 \times M)$, we associate the norm
\[ \| H \|_{1,\infty} := \int_0^1 \left( \max_{M}(H_t) - \min_{M}(H_t) \right) dt.\]
We then define
\[ \| \varphi \| := \inf \lbrace \| H \|_{1,\infty} : \varphi = \varphi^1_H \rbrace.\]
The above quantity is invariant under conjugation, i.e. $\| \psi^{-1} \varphi \psi  \| = \| \varphi \|$.  This follows from the fact that $\varphi^t_{H\circ\psi} = \psi^{-1} \varphi^t_H \psi$; see \cite[Sec. 5.1, Prop. 1]{hofer-zehnder}, for example. 

Finally, we can define a metric on $\Ham(M,\omega)$, the {\bf Hofer metric}, by
\[ d_H(\varphi,\psi) = \| \varphi^{-1} \circ \psi \| .\]
As mentioned above, this yields a nondegenerate, bi-invariant metric, which is quite remarkable given the noncompactness of $\Ham$.  Non-degeneracy is what is difficult to prove and it was established by  Hofer for $\R^{2n}$ \cite{Hofer-metric}, by Polterovich for rational symplectic manifolds \cite{Polterovich93}, and by Lalonde-McDuff in full generality \cite{Lalonde-McDuff}.

The bi-invariance of Hofer's distance also implies the following identities 
\begin{align}
  d_H(\varphi_1 \varphi_2, \psi_1 \psi_2) &\leq d_H(\varphi_1, \psi_1) + d_H(\varphi_2, \psi_2),\label{eq:Hofer-identity1}\\
  d_H(\varphi, \psi^{-1} \varphi \psi) &\leq 2 d_H(\psi, \id).\label{eq:Hofer-identity2}
\end{align}
Indeed (\ref{eq:Hofer-identity1}) follows from (\ref{eq:norm-and-join}) below and (\ref{eq:Hofer-identity2}) is proved as follows:
  \[ d_H(\varphi, \psi^{-1} \varphi \psi) = \|\varphi^{-1}\psi^{-1} \varphi \psi\|\leq \|\varphi^{-1}\psi^{-1} \varphi\|+\| \psi\|= 2 d_H(\psi, \id).\]
Now let $M = \S^2 = \lbrace (x,y,z) \in \mathbb{R}^3 : x^2 + y^2 + z^2 = 1 \rbrace.$  This has a symplectic form $\omega := \frac{1}{4 \pi} d \theta \wedge dz,$ where $(\theta,z)$ are cylindrical coordinates.  We let $\Diff(S^2,\omega)$ denote the set of smooth diffeomorphisms $\varphi$, such that $\varphi^* \omega = \omega$.  In fact, $\Diff(\S^2,\omega) = \Ham(\S^2,\omega).$  The Hofer geometry of $\Diff(S^2,\omega)$, with this identification implied, will be the topic of study in the present work.  
We recall, for later use,  that the fundamental group of $\Ham(\S^2, \omega)$ is $\Z/2\Z$
and is generated by $\mathrm{Rot}$, the full rotation around the North-South axis of the sphere; for a proof of this see, for example, \cite{Polterovich2001}; the Hamiltonian $H(\theta, z) = \frac{1}{2} z$ generates this full rotation.

We will denote the universal cover of $\Ham(\S^2, \omega)$ by $\widetilde{\Ham}(\S^2, \omega)$.  This can be described as the set of 
Hamiltonian paths, considered up to homotopy relative to endpoints; here, by a {\bf Hamiltonian path}, we mean a path of Hamiltonian diffeomorphisms 
$\{\varphi^t, 0\le t \le 1\}$. This is a two-fold covering, by  the discussion in the previous paragraph.

We next recall the {\bf displacement energy} of a subset $A \subset \S^2$.  This is by definition the quantity
\[e(A) :=\inf\{\|\phi\|: \phi(A)\cap \overline{A}=\emptyset\}.\]
It is known that for a disjoint union of closed discs, each with area $a$ and 
whose union covers less than half the area of the sphere, the displacement energy is $a$. We will need the following lemma in Section \ref{sec:proof_embedding}.

\begin{lemma}\label{lem:transport_energy}
Let $D, D' \subset \S^2$ be two disjoint closed discs of equal area.  Then,
$$\inf\{\Vert \phi\Vert : \phi(D) = D'\} = \area(D).$$
\end{lemma}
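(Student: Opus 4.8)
The plan is to prove the two inequalities separately. For the upper bound, $\inf\{\|\phi\| : \phi(D) = D'\} \le \area(D)$, I would construct an explicit Hamiltonian isotopy carrying $D$ to $D'$. Since $D$ and $D'$ are disjoint closed discs of equal area, a Moser-type argument produces a symplectomorphism sending $D$ to $D'$, but to control the Hofer norm I would argue more carefully: choose a thin ``tube'' neighbourhood connecting a slightly shrunk copy of $D$ to $D'$ and push $D$ across it. Concretely, one can first use an isotopy of arbitrarily small Hofer norm to shrink $D$ to a disc $D_\epsilon$ of area $\area(D) - \epsilon$ sitting inside $D$; then one can displace $D_\epsilon$ (it has area less than half the sphere) — actually displacement costs $\area(D)-\epsilon$ — and finally the reverse shrinking places it onto $D'$. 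A cleaner route: it is classical (see Polterovich) that two closed discs of equal area $a$ whose union has area $< 1$ can be interchanged, up to $\epsilon$ in shrinking, by a Hamiltonian of energy $a + \epsilon$; letting $\epsilon \to 0$ gives the bound. I expect this direction to be essentially standard, modulo care with the shrinking trick to handle the closedness of the discs.

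For the lower bound, $\inf\{\|\phi\| : \phi(D) = D'\} \ge \area(D)$, I would exploit the displacement-energy machinery already recalled in the excerpt. Suppose $\phi(D) = D'$ with $D \cap D' = \emptyset$. The key observation is that $D$ and $D'$ are disjoint, so $\phi$ displaces $D$ in a strong sense: $\phi(D) = D'$ is disjoint from $D$, hence $\phi(D) \cap \overline{D} = \emptyset$. By the definition of displacement energy, $\|\phi\| \ge e(D)$. Now I invoke the stated fact that a closed disc of area $a$ covering less than half the sphere has displacement energy exactly $a$; here $\area(D) = \area(D') $ and the two are disjoint, so $\area(D) < \frac12$, and therefore $e(D) = \area(D)$. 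Combining, $\|\phi\| \ge \area(D)$, and taking the infimum over all such $\phi$ gives the lower bound.

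Putting the two inequalities together yields the equality. The main obstacle I anticipate is the upper bound: one must produce a Hamiltonian isotopy taking $D$ \emph{exactly} to $D'$ (not merely up to $\epsilon$) while keeping the Hofer norm at most $\area(D)$, or else argue that the infimum over maps achieving exact equality still equals $\area(D)$ even if no minimizer exists. I would handle this by the standard device of conjugating: shrink $D$ to $D_\epsilon \subset \mathrm{int}(D)$ of area $\area(D) - \epsilon$ by a compactly supported isotopy in a neighbourhood of $D$ (cost $\to 0$ as $\epsilon \to 0$ is \emph{not} automatic — rather, one shrinks and later re-expands, and the net effect on a ``round trip'' costs at most $\epsilon$), then move $D_\epsilon$ to a disc $D'_\epsilon \subset \mathrm{int}(D')$ of the same area by an isotopy of energy $\area(D) - \epsilon + o(1)$ supported in a thin tube, then expand $D'_\epsilon$ to $D'$. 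The total energy is $\area(D) + o(1)$, and since we only need the infimum, letting $\epsilon \to 0$ finishes. The lower bound, by contrast, is immediate from the quoted displacement-energy computation.
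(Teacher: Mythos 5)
Your lower bound is exactly the paper's: any $\phi$ with $\phi(D)=D'$ in particular displaces $D$ (since $D$ and $D'$ are disjoint), so $\|\phi\|\ge e(D)=\area(D)$ by the quoted displacement-energy fact. That part is fine.

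The upper bound is where you and the paper part ways, and where your proposal has a genuine gap. You propose a shrink--move--expand scheme and claim the shrink and re-expand ``on a round trip'' cost at most $\epsilon$. This is not justified, and as written it does not hold. The shrink is supported near $D$ and the expand near $D'$, so the two are not literal inverses that cancel; if you try to make them cancel by defining the expansion as the conjugate $\psi\sigma^{-1}\psi^{-1}$ of the shrink $\sigma$ by the transport map $\psi$, the composition collapses to $\psi$ alone, which only carries $D_\epsilon$ to $D'_\epsilon$ and not $D$ to $D'$. A radial shrinking of $D$ by $\epsilon$ in area is, in general, \emph{not} $O(\epsilon)$ in Hofer norm, so you have no control on the cost of those two legs. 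Your own fallback --- citing a ``classical'' Polterovich statement that the discs can be interchanged up to $\epsilon$-shrinking --- still only places $D_\epsilon$ onto $D'_\epsilon$, so the exact-match issue you flag remains unresolved.

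The missing idea is the conjugation trick the paper uses. Take $\psi$ displacing $D$ with $\|\psi\|<\area(D)+\epsilon$. Now $\psi(D)$ and $D'$ are two discs of equal area, both contained in $\S^2\setminus D$, so there is a Hamiltonian diffeomorphism $\chi$ supported in $\S^2\setminus D$ with $\chi(\psi(D))=D'$. Since $\chi$ is supported away from $D$, we have $\chi^{-1}(D)=D$, so $\phi:=\chi\psi\chi^{-1}$ satisfies $\phi(D)=\chi\psi(D)=D'$, and by conjugation-invariance $\|\phi\|=\|\psi\|<\area(D)+\epsilon$. No shrinking or re-expanding is needed, so there is no cost to control, and exact matching of $D$ to $D'$ comes for free. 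This is the step you would need to replace your shrink--move--expand argument with.
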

\begin{proof} Let us denote $a:=\area(D)$ and $E:=\inf\{\Vert \phi\Vert : \phi(D) = D'\}$. It follows from the above discussion on displacement energy that $E\geq a$.

  For the reverse inequality, note that the same discussion also implies that for any $\eps>0$, there exists $\psi\in\Ham(\S^2,\omega)$, with $\|\psi\|<a+\eps$ and $\psi(D)\cap D=\emptyset$.  Since $\psi(D)$ and $D'$ have the same area and are both contained in $\S^2\setminus D$, there exists a Hamiltonian diffeomorphism $\chi$, supported in $\S^2\setminus D$ which maps $\psi(D)$ onto $D'$. The assumption on the support implies that $\chi\inv(D)=D$. We now pick $\phi=\chi\psi\chi\inv$. We see that $\phi(D)= D'$ and by conjugation invariance 
of the Hofer norm we have $\|\phi\|=\|\psi\|<a+\eps$.  Since such a diffeomorphism $\phi$ may be found for any $\eps>0$, this shows the reverse inequality $E\leq a$.  
\end{proof}

Next, we review the definition of the {\bf Calabi homomorphism}  \[\Cal:\Ham_U(\S^2,\omega)\to \R,\]
alluded to in the introduction.  Recall that, for proper open $U \subsetneq \S^2$, we denote by $\Ham_U(\S^2,\omega)$ the subgroup of $\Ham(\S^2, \omega)$ consisting of Hamiltonian diffeomorphisms which are supported in $U$.  Given $\varphi \in \Ham_U(\S^2,\omega)$, define
\begin{equation}\label{eqn:def_calabi}
\Cal(\varphi) = \int_{\S^1} \, \int_{\S^2} H(t, \cdot) \, \omega \, dt,
\end{equation}
where $H\in C^\infty(\S^1 \times \S^2)$ is any Hamiltonian supported in $U$ whose time--1 flow is $\varphi$.  It is well-known that $\Cal(\varphi)$ does not depend on the choice of $H$ and, moreover, $\Cal:\Ham_U(\S^2,\omega)\to \R$ is a group homomorphism; see \cite{calabi, McDuff-Salamon} for further details.

In Section \ref{sec:nonsimp}, we will also want to consider the group $\Homeo_0(\S^2,\omega)$ of {\bf area and orientation preserving homeomorphisms} of $\S^2$.  This is defined to be the group of homeomorphisms of $\S^2$, preserving the measure induced by $\omega$, in the component of the identity.  It has a distance $d_{C^0}$, called the $C^0$ distance, defined by picking a Riemannian metric $d$ on $\S^2$, and defining $$\displaystyle d_{C^0}(\varphi,\psi) = \sup_{x\in M} d(\varphi(x), \psi(x) ).$$ We remark for later use that $\Diff(\S^2, \omega)$ sits densely in the $C^0$ distance in $\Homeo_0(\S^2, \omega)$.

\subsection{The spectrum}
\label{sec:spec}
We now recall the {\bf action spectrum}, defined in \cite[Section 2.5]{CGHS}.  Let $H \in C^{\infty}(\S^1 \times \S^2).$ Recall the {\bf action functional} associated to $H$
\begin{equation}
\label{eqn:afun} 
\mathcal{A}_H(z,u) = \int^1_0 H(t,z(t)) dt + \int_{D^2} u^* \omega,
\end{equation}   
defined for capped loops $(z,u)$.  The critical points of $\mathcal{A}_H$ are pairs $(z,u)$, where $z$ is a $1$-periodic orbit of $\varphi_H^t,$ and the  
set of 
associated critical values 
is called the {\bf action spectrum} $\text{Spec}(H)$ of $H$.  The forthcoming PFH spectral invariants will take values in the {\bf order $d$} action spectrum of $H$, defined by
\[ \text{Spec}_d(H) := \cup_{k_1 + \ldots + k_j = d} \, \Spec(H^{k_1}) + \ldots + \Spec(H^{k_j}),\]
where $H^k$ denotes the $k$-fold composition of $H$ with itself.  Here, the {\bf composition} is defined by
  \[   (G \# H)(t,x)=
     \begin{cases}
       2\rho'(2t)H_{\rho(2t)}(x),&\quadif t\in[0,\tfrac12],\\
       2\rho'(2t-1)G_{\rho(2t-1)}(x),&\quadif t\in[\tfrac12, 1],
     \end{cases}
 \]  
 where $\rho: [0,1] \to [0,1]$ is a fixed non-decreasing smooth function which is equal to $0$ near $0$ and equal to $1$ near $1$. Note that we do not need $H$ and $G$ to be one-periodic to define the composition, 
 and even if they are not one-periodic, $G \# H$ will still be, since it is zero for $t$ close to $0$ and $1$.
The time $1$-map of $G \# H$ is $\varphi_G^1 \circ \varphi^1_H.$  %
 Note that for any Hamiltonians $G_1, G_2, H_1, H_2$, we have
   \begin{equation}
     \label{eq:norm-and-join}
    \|G_1\# H_1-G_2\# H_2\|_{1,\infty}= \|G_1-G_2\|_{1,\infty}+\| H_1- H_2\|_{1,\infty}.
   \end{equation}

We state here some of the properties of the order $d$ action spectrum which will be used in the following sections. Recall that $H \in C^\infty(\S^1 \times \S^2)$ is said to be {\bf mean-normalized} if $\int_{\S^2} H(t, \cdot) \omega = 0$ for all $t \in \S^2$.  Two Hamiltonians $H_0, H_1$ are said to be {\bf homotopic} if there exists a smooth path of Hamiltonians connecting $H_0$ to $H_1$ such that $\varphi^1_{H_0}= \varphi^1_{H_s} = \varphi^1_{H_1}$ for all $s\in[0,1]$.  In other words, the Hamiltonian paths 
$\{\varphi^t_{H_0}\}$ and $\{\varphi^t_{H_1}\}$, for $0 \leq t \leq 1$, coincide as elements of the universal cover $\Hamtilde(\S^2, \omega)$. Here is a list of properties of $\spec_d$ which will be needed.
\begin{enumerate}[(i)]
\item {\bf Symplectic invariance:} $\spec_d(H \circ \psi) = \spec_d(H)$, for all $H \in C^\infty(\S^1 \times \S^2)$ and $\psi \in \Ham(\S^2, \omega)$.
\item {\bf Homotopy invariance:} If $H_0, H_1$ are mean-normalized and homotopic, then $\spec_d(H_0) = \spec_d(H_1)$.%
\item {\bf Measure zero:} $\spec_d(H)$ is of measure zero.
\end{enumerate}

The above properties are well-known in the case of $\spec(H)$, that is when $d=1$; see for example \cite{Oh05}.  It is not difficult to see 
that the two initial properties follow from the case $d=1$: Symplectic invariance follows from the identity $(H\circ\psi)^k = H^k \circ \psi$, for any $k\in \N$, and Homotopy invariance is a consequence of the fact that $H_0^k, H_1^k$ are mean-normalized and homotopic, for any $k\in \N$, if $H_0$ and $H_1$ are.
 As we will now explain, the third property also follows from the $d=1$ case.  As a consequence of the definition of $\spec_d(H)$, it is sufficient to prove that the set $\Spec(H^{k_1}) + \ldots + \Spec(H^{k_j})$ is of measure zero, for any choice of $k_1, \ldots, k_j$ with the property that $k_1 + \ldots +k_j =d$.  To that end, let $(M, \omega \oplus \ldots \oplus \omega)$ be the symplectic manifold obtained by taking the $j-$fold product of $(\S^2, \omega)$ and consider the Hamiltonian  $F:\S^1 \times M \rightarrow\R$ defined by
$$F(t,x_1, \ldots, x_j) = H^{k_1}(t,x_1) +\ldots + H^{k_j}(t,x_j).$$  We conclude that   $\Spec(H^{k_1}) + \ldots + \Spec(H^{k_j})$ has measure zero by observing that it coincides with the set $\spec(F)$ which we know has measure zero.  %

\subsection{Definition of PFH}\label{sec:prelim_PFH}

We now recall the definition of periodic Floer homology (PFH), from for example \cite{Hutchings-Sullivan-Dehntwist}, which is a tool that will be central in our work.  While PFH can be defined over any surface, for simplicity we consider the case where our surface is $\S^2$,
which is the only case that is relevant for the present work.

We start with some preliminaries.  Let $\varphi \in \Diff(\mathbb{S}^2,\omega)$. Given $\varphi$, we can define the {\bf mapping torus}
\[ Y_{\varphi} := \mathbb{S}^2 \times [0,1]_t / \sim, \quad (x,1) \sim (\varphi(x),0).\]
This has a natural vector field $R := \partial_t$, which we call the {\bf Reeb} vector field, a natural one form $dt$, and a natural two-form $\omega_{\varphi}$ induced from the area form $\omega$.  The pair $(dt,\omega_{\varphi})$ is a {\bf stable Hamiltonian structure} in the sense of for example \cite{BEHWZ, CiMo, HT-stable, Wendl-Notes}.  The manifold $Y_{\varphi}$ has a plane field $\xi$ defined to be the vertical tangent bundle for the fibration $\pi: Y_{\varphi} \to \S^1$. 

We will be interested in closed integral curves
\[ \alpha: \mathbb{R}/T\mathbb{Z} \to Y_{\varphi},\]
of $R$, modulo reparametrization of the domain, which we call {\bf closed orbits}; we can identify an embedded closed orbit with its image.    A closed orbit $\alpha$ has an integral {\bf degree} $d(\alpha) := \pi_*[\alpha] \in H_1(\S^1) = \mathbb{Z}.$   The {\bf linearized return map} $P_{\alpha}$ for a closed orbit $\alpha$ is defined for any $p \in \alpha$ as the linearization of the time $T$ flow of $R$ on $\xi|_p$.  A closed orbit is called {\bf nondegenerate} if $1$ is not an eigenvalue of the linearized return map; a nondegenerate closed orbit is called {\bf hyperbolic} if the eigenvalues of $P_{\alpha}$ are real and {\bf elliptic} if the eigenvalues lie on the unit circle; these definitions do not depend on the choice of $p$.  
  
Define an {\bf orbit set} $\alpha := \lbrace (\alpha_i,m_i) \rbrace$ to be a finite set, where the $\alpha_i$ are distinct embedded closed orbits of $R$, and the $m_i$ are positive integers.   The {\bf degree} of the orbit set $\alpha$ is the sum of the degrees of the $\alpha_i$.    
The map $\varphi$ is {\bf $d$-nondegenerate} if every closed orbit %
with degree at most $d$ is nondegenerate; this is a generic condition.  %
A degree $d$ orbit set for a $d$-nondegenerate $\varphi$ is called {\bf admissible} if $m_i = 1$ whenever $\alpha_i$ is hyperbolic.  

Let $X = \mathbb{R}_s \times Y_{\varphi}$.  This has a natural symplectic form $$\omega = ds \wedge dt + \omega_{\varphi}.$$  
The pair $(X, \omega)$ is called the {\bf symplectization} of $Y_{\varphi}.$  Recall that an {\bf almost complex structure} on $X$ is a smooth bundle map $J: TX \to TX$ such that $J^2 = -1$.  A {\bf $J$-holomorphic curve} in $X$ is a map $u: (\Sigma,j) \to (X,J)$, satisfying the equation
\[ du \circ j = J \circ du.\]
Here, $\Sigma$ is a closed (possibly disconnected) Riemann surface, minus a finite number of punctures, and the map $u$ is assumed asymptotic to Reeb orbits near the punctures, see for example \cite{Hutchings-Notes} for the precise definition. 

The {\bf periodic Floer homology} $PFH(\S^2,\varphi,d)$ is the homology of a chain complex $PFC(\S^2,\varphi,d)$.  The chain complex $PFC(\S^2,\varphi,d)$ is freely generated over $\mathbb{Z}_2$ by admissible orbit sets $\alpha$ of degree $d > 0$.  The chain complex differential $\partial$ counts $J$-holomorphic curves in $X$, for generic admissible $J$; here, an almost complex structure is called {\bf admissible} if it preserves $\xi$, is $\mathbb{R}$-invariant, sends $\partial_s$ to $R$, and its restriction to $\xi$ is tamed by $\omega_{\varphi}.$  More precisely, 
\[ \langle \partial \alpha, \beta \rangle = \# \mathcal{M}_J^{I = 1}(\alpha,\beta),\]
where $I$ denotes the ECH index, defined below, we are considering curves in $X$ up to equivalence of currents and modulo translation in the $\mathbb{R}$ direction, and $\#$ denotes the mod $2$ count.

It is shown in \cite{Hutchings-TaubesI,Hutchings-TaubesII} that\footnote{More precisely, \cite{Hutchings-TaubesII} 
proves that the differential in embedded contact homology squares to zero.  As pointed out in \cite{Hutchings-TaubesI} and \cite{Lee-Taubes} this proof carries over, nearly verbatim, to our setting.}  that $\partial^2 = 0$, so the homology is well-defined; it is shown in \cite{Lee-Taubes} that it agrees with a version of Seiberg-Witten Floer cohomology and in particular is independent of $\varphi$.%

To define spectral invariants, we will want to use a {\bf twisted} version of PFH, denoted $\widetilde{PFH}(\mathbb{S}^2,\varphi,d)$; 
as we will see in \ref{sec:action}, the twisted PFH carries a natural action filtration which we will use to define the spectral invariants.  To define twisted PFH, let $\gamma$ be any degree $1$ cycle in $Y_{\varphi}$, transverse to $\xi$;
choose a homotopy class of trivializations $\tau_0$ on $\xi|_{\gamma}$.  The twisted PFH chain complex $\widetilde{PFC}$ is generated by pairs $(\alpha,Z)$, called {\bf twisted PFH generators}, where $\alpha$ is a degree $d$ admissible orbit set, and $Z \in H_2(Y_{\varphi},\alpha,d \gamma)$.
The differential counts $I = 1$ curves $C$ from $(\alpha,Z)$ to $(\beta,Z')$, namely curves $C \in \mathcal{M}_J^{I = 1}(\alpha,\beta)$, such that 
\[ [C] + Z' = Z.\]
For each $d$, there is a grading, defined below, which we call the {\bf $k$-grading}.  The homology is an invariant, and so can be computed, with the result that for $d \ge 0$ we have  
\begin{equation}\label{eq:PFH_sphere}
\widetilde{PFH}_*(\S^2,\varphi,d) = 
\begin{cases}
  \mathbb{Z}_2, & \text{if } *=d \text{ mod } 2, \\
  0 & \text{otherwise.}
\end{cases}
\end{equation}
The above identity can be proven via a direct  computation when $\varphi$ is taken to be an irrational rotation of the sphere; for more details see, for example,  \cite[Sec.\ 3.3]{CGHS}.  
We now define the ECH index $I$, and the grading $k$.   

The ECH index $I$ depends only on the relative homology class $A \in H_2(Y_{\varphi},\alpha,\beta)$ between two orbit sets.  We have
\begin{equation}
\label{eqn:echindex}
I(A) = c_{\tau}(A) + Q_{\tau}(A) + CZ_{\tau}^I(A),
\end{equation}
where $\tau$ denotes a homotopy class of trivializations of $\xi$ over all Reeb orbits, $c_{\tau}(A)$ denotes the {\bf relative Chern class} of $\xi$ restricted to $A$, $Q_{\tau}(A)$ denotes the {\bf relative self-intersection}, and $CZ^I_{\tau}$ denotes the total Conley-Zehnder index.  We will not need the precise definitions of these terms in the present work, so we omit them for brevity, referring the reader to \cite{Hutchings-index} for the details.  

We can define the promised $k$ grading.  The definitions of the relative Chern class and relative self-intersection extend verbatim to relative homology classes $A \in H_2(Y_{\varphi},\alpha,d \gamma)$, once a trivialization $\tau$ over the simple orbits in $\alpha$ and a trivialization $\tau_0$ over $\gamma$ has been chosen.  With the preceding understood, 
we now define   
\[ k(\alpha,Z) := c_{\tau,\tau_0}(Z), + Q_{\tau,\tau_0}(Z) + CZ^I_{\tau}(\alpha).\]
To simplify the notation, we will denote $k(\alpha,Z)$ by $I(Z)$ below.

\section{The spectral invariants}
\label{sec:newspec}

We now use the twisted PFH to define various invariants.  We begin by summarizing for the reader what will be done in this section.  

To set the stage for what is coming, it is helpful to recall what was done in \cite[Sec.\ 3.4]{CGHS}.  There, we defined spectral invariants $c_{d,k}(H)$ for $H\in \mathcal{H}$ where
\begin{align*}
\mathcal{H} := \{H \in C^{\infty}(\S^1 \times \S^2) : &\  \varphi^t_H(p_-) = p_- , H(t, p_-) = 0,  \forall t \in [0,1],
\\  & -\tfrac14 < \rot( \{\varphi^t_H\}, p_- ) < \tfrac14  \},
\end{align*}
where $\rot( \{\varphi^t_H\} , p_-)$ is the rotation number of the isotopy $\{\varphi^t_H\}_{t \in [0,1] }$ at $p_-$.  It was shown in addition that these invariants depend only on the time $1$-map.  Spectral invariants for compactly supported disc maps were then defined by identifying the disc with the northern hemisphere.  

Our goal now is to define spectral invariants for all $H \in C^{\infty}(\mathbb{S}^1 \times \mathbb{S}^2)$  and to find invariants that depend only on $\varphi \in \Ham(\S^2,\omega)$, rather than on a choice of generating Hamiltonian.  Here is how we do this.  First we extend the procedure in \cite{CGHS} from $H \in \mathcal{H}$ to arbitrary $H$ to get invariants $c_{d,k}$, defined when $k$ and $d$ have the same parity.  These $c_{d,k}$ extend the $c_{d,k}$ from our previous work: that is, if $H \in \mathcal{H} \subset C^{\infty}(\S^1 \times \S^2)$, then the definition of $c_{d,k}(H)$ here agrees with that in \cite{CGHS}.   Similarly to our previous work, we can then define $c_d := c_{d,-d}$.  This choice of $k = -d$ is not quite canonical, see Remark~\ref{rmk:noncanonical}, but is convenient and suffices for our purposes: what is crucial is that $c_d(0) = 0$.

As alluded to in the introduction, these $c_d$ are in general {\em not} invariants of the time $1$-map, and so are not well-suited on their own for proving our main theorems.  
However, we can use the $c_{d}$ to form new invariants.  First, we show that the $c_d$ for even $d$ are invariants when we restrict to mean-normalized Hamiltonians; similarly, the homogenizations %
$\mu_d, \zeta_d$  are also invariants restricted to mean-normalized Hamiltonians.  None of these invariants are $C^0$ continuous, so we use a linear combination of the $c_d$ for $d$ even to define another sequence $\eta_d$.

Thus, to summarize for the ease of the reader, the main product of this section are invariants $c_d$ and $\eta_d$ defined for $d$ even, and $\mu_d, \zeta_d$ defined for all $d$, together with proofs of their properties that we will need.  The $\mu_d$ and $\zeta_d$ are related by the formula \eqref{eqn:mudzeta}.  The $\mu_d$ are used to prove Theorem~\ref{theo:QI-ker-Cal}, while the $\eta_d$ are used to prove Theorem~\ref{theo:non-simplicity}; the $c_d$ are used to construct the $\mu_d$ and the $\eta_d$.

\subsection{Invariants for Hamiltonians}
We begin by introducing PFH spectral invariants $c_{d,k}(H)$ for Hamiltonians $H \in C^\infty(\S^1 \times \S^2)$.   This requires first recalling a construction of Hutchings for assigning a spectral invariant to every nonzero twisted PFH class. 

\subsubsection{The nondegenerate case}
\label{sec:action}

A Hamiltonian $H \in C^{\infty}(\mathbb{S}^1 \times \mathbb{S}^2)$ is called {\bf $d$-nondegenerate} if its time-$1$ flow $\varphi = \varphi^1_H$ is $d$-nondegenerate.
We now explain how to define PFH spectral invariants for $d$-nondegenerate Hamiltonians by extending the definition in \cite{CGHS} in a natural way.

 We begin by explaining the aforementioned construction of Hutchings for assigning a spectral invariant to a nonzero twisted PFH class.  A twisted PFH generator has an {\bf action} defined by
\[ \mathcal{A}(\alpha,Z) = \int_Z \omega_{\varphi}.\]
The differential decreases the action, see for example \cite[Sec 3.3]{CGHS}, so the action induces a filtration on the twisted PFH chain complex: we can define $\widetilde{PFC}^L$ to be the subcomplex generated by twisted PFH generators with action no more than $L$.  %
Denote the homology of this complex by $\widetilde{PFH}^L$.  For  any nonzero class  $\sigma \in \widetilde{PFH}(\S^2,\varphi, d)$, we can now define $c_{\sigma}(\varphi,\gamma,\tau_0)$ to be the smallest $L$ such that $\sigma$ is in the image of the inclusion induced map
\[ \widetilde{PFH}^L \to \widetilde{PFH}.\]
We can think of this as the minimum action required to represent $\sigma$.

The number $c_{\sigma}(\varphi,\gamma,\tau_0)$ depends on the choice of reference cycle $\gamma$ and trivialization $\tau_0$ over $\gamma$; we will now define the {\bf PFH spectral invariants} associated to a $d$-nondegenerate Hamiltonian $H$ by using the Hamiltonian flow to fix a natural reference cycle.

To make this precise, let $H$ be a $d$-nondegenerate Hamiltonian and write $\varphi = \varphi_H^1.$
Consider the trivialization
\begin{equation} \label{eq:trivialization}
\begin{split}
  \Psi_H :\;  &  \S^1 \times \S^2 \rightarrow Y_{\varphi} \\
 (t,x) \mapsto &  \left( (\varphi^{t}_H)^{-1} (x), t \right).
\end{split}
\end{equation}
Define $\gamma_H = \Psi_H ( \S^1 \times \{p_-\} )$.  This is trivialized by the pushforward $\tau_H$ of an $\mathbb{S}^1$-invariant trivialization over $p_-$.  We will now use the twisted PFH chain complex for $Y_{\varphi}$, with respect to the reference cycle $\gamma_H$, 
to define the spectral invariants.  

Assume first that $H$ vanishes at $p_-$ for all time.  For each $d\in \N$, we define
\[ c_{d,k}(H)   := c_{\sigma}(\varphi^1_H,\gamma_H,\tau_H), \quad d \equiv k \quad \text{mod 2},\]
where $\sigma$ is the unique nonzero class in $\widetilde{PFH}_k(\S^2,\varphi,d)$.  We emphasize that, even fixing the Hamiltonian diffeomorphism, this  can and will depend on $H$, since the trivialized reference cycle $\gamma_H$ does.
We note that for such an $H$, 
\begin{equation}
\label{eqn:spec1} 
c_{d,k}(H) = \mathcal{A}(\alpha,Z),
\end{equation}
for some twisted PFH generator $(\alpha,Z).$  Indeed, as explained in \cite[Sec.\ 3.3]{CGHS} this follows from the fact that the subset $\lbrace \mathcal{A}(\alpha,Z)  : (\alpha,Z) \in \widetilde{PFC}(\varphi,d) \rbrace$ $\subset \mathbb{R}$ is discrete, as under our nondegeneracy assumption there are only finitely many orbit sets of degree $d$. 

Finally, for arbitrary $H$ we reduce to the case of $H$ vanishing at $p$ by demanding that the {\bf Shift property}, stated in Proposition~\ref{prop:more} below, hold.  This says that
\begin{equation}
\label{eqn:shift1} 
c_{d,k}(H+h) = c_{d,k}(H) + d \int^1_0 h(t) dt,
\end{equation}
when $h: \mathbb{S}^1 \to \mathbb{R}$ is any function.  %

In principle, $c_{d,k}(H)$  could depend on the choice of admissible $J$, but we will see by the Monotonicity property below that it does not.

\subsubsection{Key properties }

We now prove that the PFH spectral invariants have the following key properties and extend to all, possibly degenerate, Hamiltonians.

\begin{theo}
\label{thm:PFHspec_initial_properties}
The PFH spectral invariant $c_{d,k}(H)$ admits a unique extension 
to all $H \in C^{\infty}(\S^1 \times \S^2)$ such that the extended spectral invariant $$c_{d,k} : C^{\infty}(\S^1 \times \S^2)  \rightarrow \R$$ satisfies the following properties.
\begin{enumerate}
\item Continuity:  For any $H, G \in C^{\infty}(\S^1 \times \S^2)$, we have 
$$ d \int_{\S^1} \min(H_t -G_t) \, dt \le c_{d,k} (H) - c_{d, k} (G)  \leq  d \; \int_{\S^1} \max(H_t -G_t) \, dt.$$ 
\item Spectrality: $c_{d, k}(H) \in \Spec_d(H)$.
\end{enumerate}
\end{theo}

Before giving the proof, we note that the second item of the theorem implies that if $H, G$ vanish at $p_-$,  then
\begin{equation}
\label{eqn:hofcont}
| c_{d,k}(H) - c_{d,k}(G) | \le d \|H - G\|_{1,\infty},
\end{equation} 
which is an alternative variant of the Hofer continuity property.  

\begin{proof}

The proof proceeds along similar lines as [CG-H-S, Thm. 3.6].  

{\em Step 1: Reducing to the $d$-nondegenerate case.} We now assume that the theorem has been proved for $d$-nondegenerate $H$, and explain how this implies the result for all $H$.  Given any $H$,
take any sequence of $d$-nondegenerate $H_i$ which $C^2$ converges to $H$, and define
\begin{equation}
\label{eqn:degdefn}
c_{d,k}(H) = \lim_{i \to \infty} c_{d,k}(H_i).
\end{equation}
This limit exists, and does not depend on the choice of approximating $H_i$, due to the Continuity property with $H = H_i$ and $G = H_j$.
   The same inequality implies that the extension from $d$-nondegenerate $H$ is unique as claimed; the Continuity and Shift properties for $d$-nondegenerate $H$ imply these properties for all $H$.   Spectrality for $d$-nondegenerate $H$ implies Spectrality for all $H$ by Arzela-Ascoli. 

\medskip

{\em Step 2: Reducing to Hamiltonians that vanish at $p_-$.}  %

It remains to prove Continuity
and Spectrality 
in the nondegenerate case.  

We now show that by using the Shift property \eqref{eqn:shift1}, it suffices to prove these properties for Hamiltonians vanishing at $p_-$. 
We begin with Continuity.  %
Consider arbitrary $H, G$.  Then, we can write
\begin{equation}
\label{eqn:decomp}
H = \tilde{H} + h, \quad G = \tilde{G} + g,
\end{equation}
where $h$ and $g$ are defined as the restriction of $H, G$ to $p_-$, and $\tilde{H}, \tilde{G}$ vanish on $p_-$.  Then, by the Shift property,
\[ c_{d,k}(H) - c_{d,k}(G) = c_{d,k}(\tilde{H}) - c_{d,k}(\tilde{G}) + d \int_{\S^1} (h(t) - g(t)) \, dt.\]
Thus, if Continuity holds for $\tilde{H}$ and $\tilde{G}$, then we have
\[ c_{d,k}(H) - c_{d,k}(G) \le d \int_{\S^1} \max( \tilde{H}_t - \tilde{G}_t )\,dt+ d \int_{\S^1} (h(t) - g(t)) \,dt.\]

 Now, since $h, g$ only depend on $t$, we have
 \[ \max( \tilde{H}_t - \tilde{G}_t ) = \max( H_t - G_t ) + g(t) - h(t).\]
 
Combining this equality with the previous inequality proves the rightmost inequality required for Continuity.    Similarly, if Continuity holds for $\tilde{H}$ and $\tilde{G}$, then we have
\[ c_{d,k}(H) - c_{d,k}(G) \ge d \int_{\S^1} \min( \tilde{H}_t - \tilde{G}_t )\,dt+ d \int_{\S^1} (h(t) - g(t))\,  dt,\]
and we know that
 \[ \min( \tilde{H}_t - \tilde{G}_t ) = \min( H_t - G_t ) + g(t) - h(t),\]
hence the leftmost inequality required for Continuity to hold.
 
 Similarly, if Spectrality holds for $\tilde{H}$ in \eqref{eqn:decomp}, then it holds for $H$ by the Shift property, because the addition of $h$ does not change the set of critical points of $\mathcal{A}_H$, hence by \eqref{eqn:afun}, $\text{Spec}_d(H) = \text{Spec}_d(\tilde{H}) + d \int_{\S^1} h(t) \, dt.$ %

Thus, we can assume $H$ and $G$ vanish at $p_-$.  

\medskip

{\em Step 3.  Continuity when $H$ and $G$ vanish at $p_-$}. 

Under \eqref{eq:trivialization}, the stable Hamiltonian structure $(dt,\omega_{\varphi})$ is of the form $(dt, \omega + dH\wedge dt),$ and $R = \partial_t + X_H$.  The natural symplectic form on the symplectization $X = \mathbb{R} \times Y_\varphi$ under \eqref{eq:trivialization} is
\[ \omega_H = ds \wedge dt + \omega + dH \wedge dt,\]
where $s$ is the coordinate on $\mathbb{R}$.  We henceforth identify $Y_{\varphi}$ with $\S^1 \times \S^2$ using \eqref{eq:trivialization}, we implicitly identify orbit sets on $Y_{\varphi}$ with the corresponding orbit sets on $\S^1 \times \S^2$, and we identify the trivialized reference cycle $(\gamma_H,\tau_H)$ with the $\S^1$-invariant trivialized cycle $\gamma$ over $p_-$.  %

Given $H$ and $G$, we pick a function $\beta$, which is $0$ for sufficiently small $s$, $1$ for $s$ sufficiently large, and satisfies $1 + \beta'(H-G) > 0$, we define $K = G + \beta(s) (H - G)$ and we consider the form
\[ \omega_X = ds \wedge dt + \omega + d(K dt),\]
which is symplectic and agrees with $\omega_H$ for sufficiently positive $s$ and $\omega_G$ for sufficiently negative $s$.

The general theory of (twisted) PFH cobordism maps, as developed by Chen \cite{Chen}, guarantees a chain map $\Psi_{H,G}$ between the twisted PFH chain complexes for $H$ and $G$, counting ECH index zero
$J_X$-holomorphic buildings from $(\alpha,Z)$ to $(\beta,Z')$, and inducing an isomorphism, where $J_X$ is a fibration compatible almost complex structure on $X$, in the sense that it preserves the vertical tangent bundle and its $\omega_X$-orthogonal complement.  

So, given 
$d\geq 1$ and $k\in \Z$ of the same parity, let $(\alpha_1,Z_1) + \ldots + (\alpha_m,Z_m)$ be a cycle in $\widetilde{PFC}(\varphi^1_H,d)$ representing $\sigma_{d,k}$ with
\[ c_{\sigma_{d,k}}(\varphi^1_H) = \mathcal{A}(\alpha_1,Z_1) \ge \ldots \ge \mathcal{A}(\alpha_m,Z_m)\]
and let $(\beta,Z')$ be a generator in $\widetilde{PFC}(\varphi^1_G,d)$ with maximal action among the support of $\Psi_{H,G}( (\alpha_1,Z_1) + \ldots + (\alpha_m,Z_m) ).$ 

Thus, we have a $J_X$-holomorphic building $C$ from some $(\alpha_i,Z_i)$, which we will denote by $(\alpha,Z)$, to $(\beta,Z')$.  Since, just as in \cite{CGHS}, our argument only involves action and index considerations,  we can assume that $C$ consists of a single level, and we know that 
\[ Z' + [C] = Z,\]
as elements of $H_2(\mathbb{S}^1 \times \mathbb{S}^2, \alpha, d \gamma)$.  Hence, as $I([C]) = 0$, we must have $I(Z) = I(Z') = k$, so that 
\begin{equation}
\label{eqn:diff3}
c_{d,k}(\varphi^1_H) - c_{d,k}(\varphi^1_G) \ge \mathcal{A}(\alpha,Z) - \mathcal{A}(\beta,Z').
\end{equation}

We now claim the identity
\begin{equation}
\label{eqn:diff}
\mathcal{A}(\alpha,Z) - \mathcal{A}(\beta,Z') = \int_C \omega + dK \wedge dt + K' ds \wedge dt,
\end{equation}
where $K'$ denotes the derivative with respect to $s$, and for the rest of this section $dK$ denotes the derivative in the $\S^2$ direction.

The proof of this is just as in \cite[Lem. 3.8]{CGHS}. 
Indeed, as in the proof of \cite[Lem. 3.8]{CGHS},  we have
\[ \mathcal{A}(\alpha,Z) = \int_Z \omega + d(Hdt), \quad \mathcal{A}(\beta,Z') = \int_{Z'} \omega + d(Gdt),\]
and
\[ \int_C \omega = \int_Z \omega - \int_{Z'} \omega.\]
Moreover, $\int_C d(Kdt) = \int_Z d(Hdt) - \int_{Z'} d(Gdt)$, since $H, G$ vanish on $\gamma$. So, putting this all together, we have
\[ \mathcal{A}(\alpha,Z) - \mathcal{A}(\beta,Z') = \int_C \omega + d(Kdt),\]
hence \eqref{eqn:diff}.  

Moreover, we have $\int_C \omega + dK \wedge dt \ge 0$, since as in the proof of \cite[Lem. 3.8]{CGHS}, the form $\omega + dK \wedge dt$ is pointwise nonnegative along $C$, and so in fact we obtain
\begin{equation}
\label{eqn:diff2}
\mathcal{A}(\alpha,Z) - \mathcal{A}(\beta,Z') \ge \int_C K' ds \wedge dt,
\end{equation}
The argument in \cite[Lem. 3.8]{CGHS} also shows that $ds \wedge dt$ is pointwise nonnegative on $C$. 

Now we have
\[ \int_C K' ds \wedge dt = \int_C \beta'(s) (H-G) ds \wedge dt \ge \int_C \beta'(s) \min(H_t - G_t) ds \wedge dt,\]
since $ds \wedge dt$ is pointwise nonnegative along $C$.  We can evaluate the rightmost integral in the above equation by projecting to the $(s,t)$ plane; this projection has degree $d$, and $\int \beta' = 1$, so the above inequality in combination with \eqref{eqn:diff2} and \eqref{eqn:diff3} give the leftmost inequality required for Continuity.

To prove the other inequality, we switch the role of $H$ and $G$ in the above argument, and again combine the corresponding versions of \eqref{eqn:diff3} and \eqref{eqn:diff2} to get
\[ c_{d,k}(\varphi^1_G) - c_{d,k}(\varphi^1_H) \ge \int_C \beta'(s) (G - H) ds \wedge dt,\]
hence
\[c_{d,k}(\varphi^1_H) - c_{d,k}(\varphi^1_G) \le \int_C \beta'(s) (H-G) ds \wedge dt \le  \int_C \beta'(s) \max (H_t-G_t) ds \wedge dt,\]
where in the rightmost inequality we have used the fact that $ds \wedge dt$ is pointwise nonnegative.  We then project to the $(s,t)$ plane as above to obtain the rightmost inequality required for Continuity.

\medskip
{\em Step 4.  Spectrality when $H$ vanishes at $p_-$.}

Since $H$ vanishes at $p_-$, we know by \eqref{eqn:spec1} that any $c_{d,k}(H) = \mathcal{A}(\alpha,Z)$ for some twisted PFH generator $(\alpha,Z)$.  

Recall that  $\mathcal{A}(\alpha,Z)$ is the action of some relative homology class. %
We first construct a particular homology class $Z_{\alpha}$ from a periodic orbit $\alpha$, and show that the action of this class lies in the action spectrum.  More precisely,      
let $x$ be a $q$ periodic point of $\varphi=\varphi_H^1$, and pick a capping disk $u$ for the orbit $\gamma(t)=(\varphi_H^t(x))_{t\in[0,q]}$, such that $u(0,0) = p_-$. 
Equip the disc with polar coordinates $(\theta,\rho)$ with $\theta \in \mathbb{R}/q\mathbb{Z}$, $\rho \le 1$, and then consider the homology class $Z_{\alpha}$ represented by
\[ \mathbb{R}/q\mathbb{Z} \times [0,1] \to \S^1 \times \S^2, \quad (\theta,\rho) \to (\theta \hspace{1 mm} \text{mod 1}, u(\theta,\rho)).\]
We now compute %
  \begin{align*}
    \cal A(\beta, Z_\alpha)&=\int_{Z_\alpha}(\omega+dH\wedge dt) = \int_{Z_\alpha}(\omega+d(H dt) )\\
                          &= \int u^*\omega +\int_{\partial Z_\alpha} Hdt\\
                          &= \int u^*\omega +\int_0^q H_t(\gamma(t))dt \\
                          &=\cal A_H(\gamma, u) \in \spec_q(H),
  \end{align*}
where, in the third equality above, we have used the fact that $H$ vanishes at $p_-$.

Now, given an arbitrary $(\alpha,Z)$, write $\alpha = \lbrace (\alpha_i,q_i) \rbrace$.  We can write $Z = \sum Z_{\alpha_i} + y [\mathbb{S}^2].$  Then
  \begin{align*}
    \cal A(\alpha, Z)=y+\sum \cal A(\alpha_i, Z_{\alpha_i}). 
   \end{align*}
The right hand side of the above formula is an element of $\spec_d(H)$, since we can for example absorb the $y$ into the capping of any particular orbit.
Hence, $c_{d,k}(H)\in \spec_d(H)$.
\end{proof}

 We now collect some additional useful properties of the $c_{d,k}$.
 
 \begin{prop}
 \label{prop:more}
 
The spectral invariant $$c_{d,k} : C^{\infty}(\S^1 \times \S^2)  \rightarrow \R$$ satisfies:

\begin{enumerate}
\item Normalization: $c_{d,k}(0) = 0$ for $-d \le k \le d$ 
\item Monotonicity: Suppose that $H \leq G$.
Then,
\[ c_{d, k}(H) \leq c_{d, k} (G).\]

\item Shift: Let $h: \S^1 \to \R$ be a function of time.  Then, $$c_{d,k}(H + h) = c_{d,k}(H) + d \int_{\S^1} h(t) dt.$$ 

\item Symplectic invariance: $c_{d,k}(H\circ \psi) = c_{d,k}(H)$ for any $\psi \in \Ham(\S^2, \omega)$.

\item Homotopy invariance:  If  $H_0, H_1$ are mean-normalized and homotopic, 
then $c_{d,k}(H_0) = c_{d,k}(H_1)$ 

\item Support-control: If the support of $H$ is contained in a topological disc $D$ with $\area(D) < \frac{1}{d+1} $, and $-d \le k \le d$, then $|c_{d,k}(H)|\leq 2d \; \area(D)$. 
\end{enumerate}
\end{prop}

\begin{proof} Normalization follows from our previous work \cite[Thm. 3.6]{CGHS}, since as mentioned previously the $c_{d,k}$ extend the spectral invariants we defined there.  The Shift property is immediate from the definition.   The Monotonicity property follows formally from Continuity: indeed, by Continuity we have
\[ c_{d,k}(H) - c_{d,k}(G) \le d \int_{\S^1} \max(H_t - G_t)\, dt,\]
and so if $H \le G$ then the integrand in the above inequality is nonpositive, so that we obtain Monotonicity.

To prove Symplectic invariance, let $\psi_t$ be a Hamiltonian isotopy such that $\psi_0 = \id, \psi_1= \psi$. It is sufficient to show that the function $t \mapsto c_{d,k}(H\circ \psi_t)$ is constant.   To see this, recall from Section \ref{sec:spec} that $\spec_d(H\circ \psi_t) = \spec_d(H)$ and so the function $t \mapsto c_{d,k}(H\circ \psi_t)$, which is continuous by the Continuity property of Theorem \ref{thm:PFHspec_initial_properties}, takes values in the measure-zero set $\spec_d(H)$ and so it must be constant. 

The proof of Homotopy invariance is analogous.  Let $H_s, 0 \le s \le 1$, be a smooth path of mean-normalized Hamiltonians connecting $H_0$ to $H_1$.  Note that, by the Homotopy invariance of the action spectrum from Section \ref{sec:spec}, we have  $\spec_d(H_s) = \spec_d(H_0)$ for all $d\in \N$ and $s \in [0,1]$.  Then, the continuous function $s \mapsto c_{d,k}(H_s)$ is constant because it takes values in the measure zero set $\spec_d(H_0)$.  We conclude that $c_{d,k}(H_0) = c_{d,k}(H_1)$.

It therefore remains to prove Support-control.  The proof will rely on the following lemma.  We will say that a set $U$ is {\bf $d$-displaced} by a map $\Psi$ if the sets $U, \Psi(U),\ldots, \Psi^{d}(U)$ are all disjoint. 

\begin{lemma}\label{lemma:eta-d-displacement_v1} Let $F$ be a Hamiltonian and let $B$ be an open topological disc which is $d$-displaced by $\varphi^1_F$. Then, for any Hamiltonian $G$ which is supported in $B$, we have $c_{d,k}(G \# F)=c_{d,k}(F)$. 
\end{lemma}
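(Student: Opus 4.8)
The plan is to reduce the statement to a comparison of action-filtered twisted PFH complexes for $\varphi^1_{G\#F}$ and $\varphi^1_F$, and to show that the $d$-displacement hypothesis forces these filtered complexes, and the relevant reference cycles, to agree at the chain level in the degrees that matter. First I would choose the Hamiltonian $G$ to be $1$-nondegenerate and then choose $G$ and $F$ (or a nondegenerate approximation of them) so that $\varphi^1_{G\#F}$ and $\varphi^1_F$ are both $d$-nondegenerate; by the Continuity property of Theorem~\ref{thm:PFHspec_initial_properties} and the definition \eqref{eqn:degdefn} of the spectral invariants in the degenerate case, it suffices to prove the identity under this nondegeneracy assumption. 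I would also use the Shift property to normalize $G$ and $F$ to vanish at $p_-$, noting that since $G$ is supported in $B$ (which we may assume does not contain $p_-$, as it is $d$-displaced and $p_-$ is fixed) the composition $G\#F$ also vanishes at $p_-$ and $\int_{\S^1}(G\#F)(t,p_-)\,dt = \int_{\S^1}F(t,p_-)\,dt$, so the two sides of the claimed equality shift in the same way.

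The heart of the argument is the following geometric observation: write $\varphi = \varphi^1_F$ and $\phi = \varphi^1_{G\#F} = \varphi^1_G\circ\varphi^1_F$. A degree-$d$ periodic orbit set $\alpha$ for $\phi$ corresponds to a collection of fixed points of iterates $\phi^q$ with total multiplicity $d$. I would show that because $B$ is $d$-displaced by $\varphi$, i.e.\ $B, \varphi(B),\ldots,\varphi^d(B)$ are pairwise disjoint, and $\varphi^1_G$ is supported in $B$, any periodic orbit of $\phi$ of total degree $\le d$ never enters $B$: once the orbit avoids $B$, the maps $\phi$ and $\varphi$ agree along it, so the admissible degree-$d$ orbit sets of $\phi$ and of $\varphi$ are literally the same, and moreover their linearized return maps, Conley--Zehnder data, and relative homology classes agree. (The point is that an orbit passing through a point $x\in B$ would have to return to $x$ after $q\le d$ steps, but $\phi$ moves $x$ to $\varphi^1_G(\varphi(x))$; tracking which iterates of $\varphi$ land back in $B$ and using disjointness of $B,\ldots,\varphi^d(B)$ rules this out — this is the standard displacement argument, essentially as in the proof that displaceable sets contribute nothing, cf.\ the discussion of displacement energy in Section~\ref{sec:prelim_symp}.) Granting this, the twisted PFH chain complexes $\widetilde{PFC}(\phi,d)$ and $\widetilde{PFC}(\varphi,d)$ have the same generators; one then checks that the reference cycles $\gamma_{G\#F}$ and $\gamma_F$ together with their trivializations are identified, since both come from pushing forward the $\S^1$-invariant trivialized cycle over $p_-$ under $\Psi_{G\#F}$ resp.\ $\Psi_F$, and $\varphi^t_{G\#F}$ and $\varphi^t_F$ agree near $p_-$ up to reparametrization. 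Finally, the action $\mathcal{A}(\alpha,Z) = \int_Z\omega_\varphi$ of a generator is computed, exactly as in Step~4 of the proof of Theorem~\ref{thm:PFHspec_initial_properties}, as $\mathcal{A}_F(\gamma,u)$; since the orbit $\gamma$ and its capping avoid $B$ and $G$ is supported in $B$, we get $\mathcal{A}_{G\#F}(\gamma,u) = \mathcal{A}_F(\gamma,u)$, so the action filtrations coincide on the nose. A parallel argument shows the differentials agree: an $I=1$ $J$-holomorphic curve between two such orbit sets, after a suitable choice of admissible $J$ that is ``standard'' near the region swept by $B$, stays away from $B$, so it is simultaneously a differential-contributing curve for $\phi$ and for $\varphi$.

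Putting these together: $\widetilde{PFC}^L(\phi,d)$ and $\widetilde{PFC}^L(\varphi,d)$ are identical filtered complexes for every $L$, hence their spectral numbers in $k$-grading $k$ agree, which is precisely $c_{d,k}(G\#F) = c_{d,k}(F)$. The main obstacle I anticipate is the careful verification that the $d$-displacement hypothesis really does keep all degree-$\le d$ orbits (and the index-$1$ curves between them) out of $B$ — one has to be a little careful that a periodic orbit of period $q$ can pass through $B$ at several times, and that it is the disjointness of the full list $B,\varphi(B),\ldots,\varphi^d(B)$, not merely $B\cap\varphi(B)=\emptyset$, that is used; and one must make sure the almost complex structure can be chosen compatibly so that the curve-level statement, not just the generator-level statement, goes through. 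The rest is bookkeeping with the action formula and the definitions of the reference cycle and trivialization, all of which parallels Section~\ref{sec:newspec} and \cite{CGHS}.
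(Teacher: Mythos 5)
Your approach is genuinely different from the paper's, and unfortunately it has a gap at exactly the step you flag as the "main obstacle."

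Your analysis of the generators is essentially right: the $d$-displacement hypothesis does force every degree-$\le d$ periodic orbit set of $\phi=\varphi^1_G\circ\varphi^1_F$ to coincide with one of $\varphi=\varphi^1_F$ and to avoid $B$, and the corresponding actions agree. But the spectral number $c_{d,k}$ is a min-max over the action \emph{filtration}, and the filtration is determined not just by the generators and their actions but by the differential, i.e.\ by the counts of $I=1$ $J$-holomorphic curves in the two symplectizations $\R\times Y_\phi$ and $\R\times Y_\varphi$. For the filtered complexes to be "identical," you would need a bijection between these curve counts, and the assertion that a curve "after a suitable choice of admissible $J$ that is `standard' near the region swept by $B$, stays away from $B$" is not justified: there is no maximum-principle or monotonicity argument in this setting that confines a PFH curve to the complement of the tube over $B$. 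A curve between two orbit sets that both avoid $B$ can still pass through $B$, and a priori the curve counts for $\phi$ and for $\varphi$ can differ, so the filtered complexes need not agree. Without controlling the differential, knowing the generators and actions agree only tells you that the two spectral numbers lie in the same finite set of action values; it does not tell you they are equal.

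The paper avoids this entirely. It interpolates by a one-parameter family $K^s=G^s\#F$ with $G^s(x,t)=sG(x,st)$, so that $\varphi^1_{K^s}=\varphi^s_G\varphi^1_F$. One shows (by the same orbit-level displacement argument you used, not any curve-level one) that $\spec_d(K^s)=\spec_d(F)$ for all $s$. Then one invokes two soft facts already established: $s\mapsto c_{d,k}(K^s)$ is continuous (Hofer continuity), and it takes values in the fixed measure-zero set $\spec_d(F)$ (Spectrality); hence it is constant. Finally $c_{d,k}(K^0)=c_{d,k}(F)$ by Homotopy invariance and the Shift property. This "spectrality $+$ continuity $+$ measure zero" device is precisely what substitutes for the chain-level comparison you were attempting, because the action spectrum depends only on periodic orbits (which you can control) and not on holomorphic curves (which you cannot). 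If you want to keep your route, the natural fix is to stop trying to identify the filtered complexes and instead run the interpolation argument — but then you have reproduced the paper's proof.
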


A similar lemma was established %
in \cite[Lemma 4.4]{CGHS} but only for maps supported in the northern hemisphere. The argument presented here is essentially the same and so we will be rather brief.

\begin{proof}[Proof of Lemma \ref{lemma:eta-d-displacement_v1}]
  Let $(K^s)_{[0,1]}$ be a smooth one parameter family of Hamiltonians such that for any $s\in[0,1]$, the time-one map of $K^s$ is $\varphi_G^s \varphi_F^1$ and such that the isotopy of $K^s$ consists in following first the isotopy generated by $F$ and then that generated by $sG(st,x)$. More precisely, we may take  $K^s = G^s \# F,$ where $G^s(x,t) := s G(x,st)$.  
It generates the isotopy
\[\varphi_{K^s}^t=
  \begin{cases}
\varphi_F^{\rho(2t)},&\quadif t\in[0,\tfrac12],\\
\varphi_G^{s\rho(2t-1)}\varphi^1_F   ,&\quadif t\in[\tfrac12, 1]. 
  \end{cases}
\]
  
Then, for all $s\in[0,1]$, $\spec_d(K^s)=\spec_d(F)$: the argument for this is exactly the same as the argument\footnote{To orient a reader who reads \cite[Lem. 4.4]{CGHS}, note that the argument there refers to the spectrum of the time $1$ maps of $K^s$ and $F$ rather than to the Hamiltonians themselves; this is because in that proof, the Hamiltonians are all assumed zero on the southern Hemisphere so we can refer to the spectrum in terms of the time $1$-map; however the argument for that Lemma extends to the case here with no changes.} in \cite[Lem. 4.4]{CGHS} and so we will omit it.  This implies that for any $(d,k)$ the continuous map $s\mapsto c_{d,k}(K^s)$ take values in $\spec_d(F)$; the fact that this map is continuous is a consequence of Hofer continuity of $c_{d,k}$, see \eqref{eqn:hofcont}. Since this set is totally discontinuous, we 
deduce that these functions are all constant, and so $c_{d,k}(K^0) = c_{d,k}(K^1)$.

Since $K^1=G\# F$, it is sufficient to show that $c_{d,k}(K^0) =  c_{d,k}(F)$ to finish the proof. 
To see this, note that the Hamiltonians flows $\varphi^t_{K^0}$ and $\varphi^t_{F}$ are homotopic rel.\ endpoints and, moreover,  $\int_{\S^1} \int_{\S^2} K^0 \omega \, dt =  \int_{\S^1} \int_{\S^2} F \omega \, dt$.  It then follows from the Homotopy invariance and Shift properties that $c_{d,k}(K^0) =  c_{d,k}(F)$.
\end{proof}

We will now use Lemma \ref{lemma:eta-d-displacement_v1} to establish the Support-Control inequality. 
\begin{proof}[Proof of the Support-control inequality] Fix $d>0$. Let $H$ be a Hamiltonian supported in a disc $D$ of area smaller than $\frac1{d+1}$. This area condition implies that we can find a Hamiltonian $F$ such that the disc $D$ is $d$-displaced by $\varphi^1_F$.  %

Furthermore, for any $\eps>0$, we may assume that $\Vert F \Vert_{1, \infty} \leq  \mathrm{Area}(D)+\eps$. To see this, note that we can find an area preserving diffeomorphism $\psi$ such that $\psi(D)$ is sandwiched between two meridians (that is, curves with $\theta = $ constant) of $\S^2$ which enclose a region of area $\mathrm{Area(D)}+\frac\eps2$. Suppose that $	\eps$ is so small that ${\mathrm{Area(D)}+\eps}<\frac1{d+1}$.  Then, consider the Hamiltonian $K = \frac{\mathrm{Area(D)}+\eps}2z$ whose time-1 map $\varphi^1_K$ is the horizontal rotation of angle $\mathrm{Area(D)}+\eps$, which $d$-displaces $\psi(D)$.  Then, we may set $F = K \circ \psi$, whose time-1 map, $\psi^{-1} \varphi^1_K \psi$, d-displaces the disc $D$. Clearly, $\Vert F \Vert_{1, \infty} = {\mathrm{Area(D)}+\eps}$.

By Lemma \ref{lemma:eta-d-displacement_v1}, we have $c_{d,k}(H \# F)=c_{d,k}(F)$.  Using this, and Equation \ref{eqn:hofcont}, we obtain
$$ | c_{d,k}(H) - c_{d,k}(F) | = | c_{d,k}(H\# 0) - c_{d,k}(H \# F) |  \leq d \Vert H\# 0 - H \# F \Vert_{1, \infty} = d \Vert F \Vert_{1, \infty}. $$
Hence, we have $$| c_{d,k}(H)| \leq |c_{d,k}(F)| + d \Vert F \Vert_{1, \infty} \leq 2d \Vert F \Vert_{1, \infty} = 2d \, \mathrm{Area(D)}+2\eps.$$
This completes the proof of support-control inequality.
\end{proof}

We have now completed the proof of Proposition \ref{prop:more}.
\end{proof}

 \subsection{Invariants for Hamiltonian diffeomorphisms}\label{sec:eta_d}

The goal of this section is to introduce PFH spectral invariants, and other related invariants, for Hamiltonian diffeomorphisms.

 \subsubsection{The $c_d$}
We begin by noting that we can now define the PFH spectral invariants on the universal cover $\widetilde{\Ham}(\S^2, \omega)$ as follows.  Given $\tilde{\varphi} \in \widetilde{\Ham}(\S^2, \omega)$, let $H$ be a mean-normalized Hamiltonian such that the Hamiltonian path $\{\varphi^t_H\}, 0 \le t \le 1$, is  a representative for $\tilde \varphi$.  Define
 \begin{equation}\label{eq:c_d_universal_cover}
  c_{d,k}(\tilde{\varphi}) := c_{d,k}(H).
 \end{equation}
 
 The mapping $$c_{d,k}: \widetilde{\Ham}(\S^2, \omega) \rightarrow \R$$ is well-defined as a consequence of the Homotopy invariance property in Proposition \ref{prop:more}.
Note that for any (not necessarily normalized) Hamiltonian $H$, the Shift property yields 
\begin{equation}
c_{d,k}(\tilde\varphi)=c_{d,k}(H)-d\int_{\S^1}\int_{\S^2}H_t\,\omega\,dt,\label{eq:relation-cd_cd-d}
\end{equation}
where $\tilde\varphi$ is the lift of $\varphi$ given by the isotopy $(\varphi_H^t)_{t\in[0,1]}$.

However, to prove our main theorems, we will want invariants of $\Ham(\S^2,\omega)$ rather than $\widetilde{\Ham}(\S^2,\omega)$.

To produce such invariants, we start by showing that, as mentioned above, it turns out that we can use the $c_{d,k}$ to define invariants that are independent of the choice of mean normalized Hamiltonian.  To get started with this, let $\tilde\varphi \in \widetilde{\Ham}(\S^2,\omega).$ Define
\[ c_d(\tilde\varphi) := c_{d,-d}(\tilde\varphi).\]

Next, we will prove that, for even $d$, the map $c_d: \widetilde{\Ham}(\S^2,\omega)  \rightarrow \R$ descends to $\Ham(\S^2, \omega)$.  In other words, we will show that there is a well-defined map 
\[ c_d: \Ham(\S^2, \omega) \rightarrow \R.\]
This is the content of Proposition~\ref{prop:cdeven}.

\begin{remark}
\label{rmk:noncanonical}
The $c_d$ as defined here are not canonical.  We could equally well define
\[ c_d(\varphi) := c_{d,k}(\varphi_H^1)\]
for any $-d \le k \le d$ with the same party as $d$.  What is important for the applications in our paper is to choose a $k$ such that the $c_{d,k}$ satisfy the Normalization property.  It is also instructive to note that because addition of 
the homology class of a fiber of the map $Y_{\varphi} \to S^1$ 
induces a canonical bijection of the twisted PFH chain complex, we have
\begin{equation}
\label{eqn:sphere}
c_{d,k+2d+2}(H) = c_{d,k}(H) + 1.
\end{equation}  
In particular, as a function on $C^{\infty}(\mathbb{S}^1 \times \mathbb{S}^2)$, any $c_{d,k'}$ differs by a constant function from some $c_{d,k}$ with $-d \le k \le d$.

To summarize, then, there are essentially $d+1$ possible spectral invariants corresponding to degree $d$, and we have made a non-canonical choice of one of them moving forward, with the main goal of simplifying the notation.
\end{remark}

For future use, we also define
\[c_d(H):=c_{d,-d}(H), \]
for any $H\in C^\infty(\S^1 \times \S^2)$.

\begin{prop}
\label{prop:cdeven} 
For any positive even integer $d$ and any even integer $k$, the invariant $c_{d,k}: \widetilde{\Ham}(\S^2, \omega) \rightarrow \R$ descends to $\Ham(\S^2, \omega)$.  In other words, it does not depend on the choice of mean-normalized $H$.
\end{prop}

In particular, we obtain a well-defined invariant $c_d:\Ham(\S^2,\omega)\to\R$ for any positive even integer $d$.

\begin{proof} Let $H$ be any Hamiltonian and $K = \frac{1}{2} (z+1).$  Note that the Hamiltonian $K$ vanishes at $p_-$ and its time $1$ flow is rotation by $2 \pi$ about the $z$-axis.   We will show below that for any positive integer $d$ and integer $k$ of the same parity as $d$, 
\begin{equation}
\label{eqn:rotcomp}
c_{d, k}(H \# K) = c_{d,k}(H) + \tfrac{d}{2}.
\end{equation}   
This implies Proposition~\ref{prop:cdeven}, by the following argument.  Let $H_1$ and $H_2$ be mean-normalized Hamiltonians generating the same time $1$-map.  We can assume that $H_1$ and $H_2$ are not homotopic,
or else the proposition holds by Proposition \ref{prop:more}, item 5.  Then, $H_1 \# (K - \frac{1}{2})$ and $H_2$ are homotopic, and $H_1 \# (K - \tfrac{1}{2})$ is mean-normalized, hence
\[ c_{d, k}(H_1 \# (K - \tfrac{1}{2}) ) = c_{d, k}(H_2).\] 
On the other hand, by the Shift property       
\[ c_{d, k}(H_1 \# (K - \tfrac{1}{2}) ) = c_{d, k}(H_1 \# K) - \frac{d}{2},\]
so that the Proposition follows from \eqref{eqn:rotcomp}.  

It remains to prove \eqref{eqn:rotcomp}.  To prove this, we first note that
\[ c_{d, k}(H \# 0) = c_{d, k}(H).\]
Indeed, $H \# 0$ and $H$ are homotopic, with the same mean.  Thus, it suffices to show that
\[ c_{d, k}(H \# K) = c_{d, k}(H \# 0) + \frac{d}{2}.\]
To prove this, by the Shift property we can assume that $H$ vanishes at $p_-$.  Then, $H \# K$ and $H \# 0$ have the same time $1$-map $\varphi$, and the same reference cycle $\gamma \subset Y_{\varphi}$.  The only difference between them is the trivialization of $V$ over $\gamma$; more precisely, if $\tau'$ denotes the trivialization over $\gamma$ induced by $H \# K$ and $\tau$ denotes the trivialization induced by $H \# 0$, then we have $\tau' = \tau - 1$.  Thus, since in this case the identity map is an isomorphism of the twisted PFH chain complexes, which shifts the grading by $d^2 + d$ by \cite[Eq. 6, Lem. 2.5.b]{Hutchings-index}, 
we have
\[ c_{d,k}(H \# K) = c_{d,k+d^2+d}(H \# 0) = c_{d,k}(H \# 0) + \frac{d}{2}\]
as desired; here, we have used \eqref{eqn:sphere} for the second equality above. 
\end{proof}

\subsubsection{Homogenized invariants}

We introduced the homogenizations $\mu_d$ and $\zeta_d$ in the introduction (see Equations (\ref{eq:mu_d}) and (\ref{eqn:zeta_d_def}). The next proposition states that they are well-defined.

\begin{prop}\label{prop:mu_d-well-defined}
  There are well defined maps $\mu_d:\Ham(\S^2,\omega)\to\R$ and $\zeta_d:C^\infty(\S^2)\to\R$ given by
  \[\mu_d(\varphi)=\limsup_{n\to\infty}\frac{c_d(\tilde\varphi^n)}n\quadandquad \zeta_d(H)=\limsup_{n\to\infty}\frac{c_{d}(nH)}n,\]
  for any  $\varphi\in\Ham(\S^2,\omega)$ and $H\in C^\infty(\S^2)$.
\end{prop}

\begin{remark} One can more generally define $\zeta_d(H):=\limsup_{n\to\infty}\frac{c_{d}(H^n)}n$ for any (non necessarily autonomous) Hamiltonian $H\in C^\infty(\S^1\times\S^2)$. However, we choose to restrict $\zeta_d$ to $C^\infty(\S^2)$ in analogy with \cite{Entov-Polterovich-Quasi-States}, where a similar map $\zeta$ was defined and was proved to satisfy the properties of a symplectic quasi-state. It would be interesting to see if our $\zeta_d$ also has these properties.
\end{remark}

\begin{proof} The fact that both of the above $\limsup$ exist follows directly from the Continuity property of $c_d:=c_{d,-d}$ in  Theorem \ref{thm:PFHspec_initial_properties}.  This shows that $\zeta_d$ is well defined on $C^\infty(\S^2)$ and that $\mu_d$ is well-defined on the universal cover $\Hamtilde(\S^2,\omega)$. It remains to show that $\mu_d$ descends to $\Ham(\S^2,\omega)$.

  To see this, let $\varphi\in\Ham(\S^2,\omega)$. Let $\tilde\varphi, \tilde\varphi'$ be two lifts of $\varphi$ to $\widetilde{\Ham}(\S^2,\omega)$. Recall that $\pi_1(\Ham(\S^2,\omega)$ has only two elements and that the non-trivial element is represented by the isotopy $\{\varphi_R^t\}$, where $R(\theta,z)=\frac z2$ is the Hamiltonian which generates a full $2\pi$ rotation around the $z$-axis. As a consequence, for any $n\in\N$, since $\tilde\varphi^n$ and $\tilde\varphi'^n$ are both lifts of the same diffeomorphism $\varphi^n$,  we have either $\tilde\varphi^n=\tilde\varphi'^n$ or  $\tilde\varphi^n=\tilde\varphi'^n\tilde\varphi_R$.
In both cases, the Continuity property of Theorem \ref{thm:PFHspec_initial_properties}
gives an upper bound which does not depend on $n$:
  \[\left|c_d(\tilde\varphi^n)-c_d(\tilde\varphi'^n)\right|\leq d\,\|R\|_{1,\infty}.\]
  It then follows that
  \[\limsup_{n\to\infty}\frac{c_d(\tilde\varphi^n)}n=\limsup_{n\to\infty}\frac{c_d(\tilde\varphi'^n)}n,\]
  which proves that $\mu_d$ descends to $\Ham(\S^2,\omega)$.
\end{proof}

We next state some of the properties of $\mu_d$ which will be used in our arguments.

\begin{prop}\label{prop:mu_d_properties}
The invariant $\mu_d : \Ham(\S^2, \omega) \rightarrow \R$ satisfies the following properties:
 \begin{enumerate}
 \item Normalization:  $\mu_d(\id) =0$. 
 \item Hofer continuity: For all $\varphi, \psi$ we have
   \[ |\mu_d(\varphi) - \mu_d(\psi) | \leq d\, d_H(\varphi, \psi). \] %
  \item Calabi property: Suppose that $\supp(\varphi)$ is contained in a topological disc $D$.  If $\area(D) < \frac{1}{d+1} $, then $$ \frac1d\mu_d(\varphi) = -\Cal(\varphi),$$
where $\Cal:\Ham_c(D,\omega)\to\R$ denotes the Calabi invariant.
\item Relationship with $\zeta_d$: For any $H \in C^{\infty}(\S^2)$, 
   \begin{equation}
   \label{eqn:mudzeta}
   \mu_d(\varphi^t_H)= \zeta_d(tH) -  t\,d \int_{\S^2} H \omega,
   \end{equation}
  for all $t\in\R$. 
\end{enumerate}
  \end{prop}
  
 \begin{proof}
 The first item follows immediately from the definition of $\mu_d$ combined with the fact that  $c_{d}(0) =0$. 

To prove the second item, let $\varphi, \psi\in\Ham(\S^2,\omega)$ and $H,K$ be mean-normalized Hamiltonians with $\varphi_H^1=\varphi$ and $\varphi_K^1=\psi$. We also denote by $\tilde\varphi, \tilde\psi\in\Hamtilde(\S^2,\omega)$ the lifts of $\varphi,\psi$ respectively given by $H,K$.
Then, by definition 
\[\tfrac1d|c_d(\tilde\varphi^n) - c_d(\tilde\psi^n)|=\tfrac1d|c_{d}(H^{\# n})-c_{d}(K^{\# n})|,\]
for any $n>0$. Here $H^{\# n}$ denotes the $n$-fold composition 
$H\#\dots\# H$.
By the Continuity property of $c_{d}$, we have
\[\tfrac1d\,|c_{d}(H^n)-c_{d}(K^n)| \leq  \|H^{\# n}-K^{\# n}\|_{1,\infty}= n\|H-K\|_{1,\infty}.\]
Note that this last equality follows from (\ref{eq:norm-and-join}).
From this inequality, we deduce 
\[|\mu_d(\varphi) - \mu_d(\psi)|\leq d \|H-K\|_{1,\infty}.\]
Since this holds for any choices of Hamiltonians $H,K$,  and since we can restrict to mean-normalized Hamiltonians in computing the Hofer norm of $\varphi^{-1}\psi$, the Hofer continuity property follows.

The Calabi property is a consequence of the Support-control property from Prop. \ref{prop:more}.
Indeed, given any Hamiltonian $H$ with support in $D$, 
\eqref{eq:relation-cd_cd-d} yields 
\begin{equation*}
c_d(\tilde\varphi)=c_{d}(H)-d \int_{\S^1}\int_{\S^2}H_t\,\omega\,dt,
\end{equation*}
where $\tilde\varphi$ is the lift of $\varphi$ given by the isotopy $(\varphi_H^t)_{t\in[0,1]}$. The integral in the second term in the right hand side above is nothing but $-\Cal(\varphi)$, while the first term is bounded from above by $2d\,\mathrm{Area}(D)$, by Support-control. Applying this to $\varphi^n$, for any $n>0$, we get
\[|\tfrac1n c_d(\tilde\varphi^n)+d\,\Cal(\varphi^n)| = \tfrac1n|c_d(\tilde\varphi^n)+d\,\Cal(\varphi^n)|=\tfrac1n|c_{d}(H^n)|\leq \tfrac1n 2d\,\mathrm{Area}(D).\]
 The Calabi property follows from this last inequality.

As for the last item, it follows from the definitions of $\mu_d$ and $\zeta_d$, and 
 \eqref{eq:relation-cd_cd-d} that
 $$\mu_d(\varphi^t_H) = \mu_d(\varphi^1_{tH}) = \zeta_d(tH) - d\int_{\S^1}\int_{\S^2}tH\,\omega\,dt.$$ 
 \end{proof}

\subsubsection{The invariants $\eta_d$}

 Although we can use the invariants $c_d$ or $\mu_d$ to get invariants of the time-$1$ map, these invariants will not in general be $C^0$-continuous, as they require mean normalizing the Hamiltonian.  
 We obtain $C^0$-continuous invariants by defining, for even $d\in \N$, the numbers
\begin{equation} \label{eq:eta_d}
\begin{split}
 \eta_d : \;    \Ham(\S^2, \omega) \rightarrow \R \\
  \varphi \mapsto c_d(\varphi) - \frac{d}{2} c_2(\varphi).
\end{split}
\end{equation}

The fact that $\eta_d$ is well-defined is an immediate consequence of Proposition \ref{prop:cdeven}. Observe that, by Proposition \ref{prop:cdeven} and the Shift property of Proposition \ref{prop:more}, we have 
\begin{equation}\label{eqn:eta_d2}
\eta_d(\varphi) = c_d(H) - \frac{d}{2}c_2(H),
\end{equation}
where $H$ is any Hamiltonian with time-$1$ flow $\varphi$.

 We now prove that $\eta_d$ satisfies various properties, most notably $C^0$--continuity.

\begin{prop}\label{prop:eta_d_properties}
The invariant $\eta_d$ is well-defined and satisfies the following properties for all $\varphi, \psi \in \Ham(\S^2, \omega)$.
\begin{enumerate}
\item Normalization: $\eta_d(\id) = 0$.
\item Hofer continuity:   
 $ \vert  \eta_d(\varphi) - \eta_d(\psi) \vert  \leq  d \; d_H( \varphi, \psi) .$
\item Support-control: If the support of $\varphi \in \Ham(\S^2, \omega)$, is contained in a topological disc $D$ with $\area(D) < \frac{1}{d+1} $, then $\vert \eta_d(\varphi)|\leq 2d \; \area(D)$.
\item $C^0$ continuity: The mapping $\eta_d: \Ham(\S^2, \omega)\rightarrow \R$ is continuous with respect to the $C^0$ topology on $\Ham(\S^2, \omega)$ and, moreover, it extends continuously to $\Homeo_0(\S^2, \omega)$.
\end{enumerate}
\end{prop}

We recall in relation to the support-control inequality that the total area of the sphere is assumed to be one.

\begin{proof}[Proof of Proposition~\ref{prop:eta_d_properties}]
The first and third properties are immediate consequences of the same properties of the invariants $c_{d,k}$; see  Theorem \ref{thm:PFHspec_initial_properties} and Proposition \ref{prop:more}.  
To prove the second, note first of all that if $H$ and $G$ are any Hamiltonians, then by the Continuity property of Theorem \ref{thm:PFHspec_initial_properties}, we have
\[ (c_d(H) - c_d(G)) - \frac{d}{2}\left( c_2(H) - c_2(G)\right) \le d || H - G ||.\]
Now let $K$ be any Hamiltonian generating $\varphi^{-1}\psi$ and let $G$ generate $\varphi$.  Then $H := G \# K$ generates $\psi$ and hence by the above inequality    
\[ \eta_d(\psi) - \eta_d(\varphi) \le d || K ||, \quad \eta_d(\varphi) - \eta_d(\psi) \le d || K ||, \]
hence the Hofer continuity property, since $K$ was arbitrary.

We only have to establish the $C^0$-Continuity property.  This takes up the remainder of this subsection. Our proof will follow the lines (and use some of the intermediate steps) of \cite[Section 4]{CGHS}, which established a similar result for the invariant $c_{d}$ restricted to maps supported in the northern hemisphere. We fix some degree $d>0$. The result will follow from the next proposition.

\begin{prop}\label{prop:C0-continuity-elsewhere} For any $h\in\Homeo_0(\S^2,\omega)$ and $\eps>0$, there exists  $\delta>0$ such that for all $f,g\in \Ham(\S^2, \omega)$ satisfying $d_{C^0}(f, h)<\delta$ and $d_{C^0}(g,\id)<\delta$,  the inequality $|\eta_d(gf)-\eta_d(f)|<\eps$ holds. 
\end{prop}

Let us temporarily assume this proposition and explain how it implies the $C^0$-Continuity property of $\eta_d$. Let $(f_i)_{i\in\N}\in\Ham(\S^2,\omega)$ be a sequence which $C^0$ converges to $h\in\Ham(\S^2,\omega)$. We may write $f_i$ in the form $g_ih$, with $d_{C^0}(g_i,\id)\to 0$ as $i$ goes to $\infty$. By the proposition we have $|\eta_d(g_ih)-\eta_d(h)|\to 0$, which proves the $C^0$-continuity. To prove extension to $\Homeo_0(\S^2, \omega)$ let $h\in\Homeo_0(\S^2,\omega)$ and let $(f_i)_{i\in\N}\in\Ham(\S^2, \omega)$ be a sequence which $C^0$-converges to $h$. Then, $d_{C^0}(f_i,f_j)=d_{C^0}(f_if_j\inv, \id)$ becomes arbitrarily small when $i,j$ are large enough and so Proposition \ref{prop:C0-continuity-elsewhere} implies that 
$|\eta_d(f_i)-\eta_d(f_j)|=|\eta_d((f_if_j\inv)f_j)-\eta_d(f_j)|$ becomes arbitrarily small for $i,j$ large enough so that 
$\eta_d(f_i)$ converges. 
Proposition \ref{prop:C0-continuity-elsewhere} also similarly implies that if $(f_i')_{i\in\N}$ is another sequence converging to $h$, then $|\eta_d(f_i)-\eta_d(f_i')|\to 0$, hence the limit does not depend on the choice of limiting sequence. This allows us to consistently define $\eta_d(h)$ for any $h\in\Homeo_0(\S^2,\omega)$ by setting
\[\eta_d(h):=\lim_{i\to\infty}\eta_d(f_i),\]
for any sequence $f_i$ which $C^0$-converges to $h$.
\end{proof}

We now prove Proposition \ref{prop:C0-continuity-elsewhere}.

\begin{proof}[Proof of Proposition \ref{prop:C0-continuity-elsewhere}]
We give the proof of this proposition in two steps.
 
 {\em Step A.  Continuity in the non-finite order case.}

 We first assume that $h$ is not of finite order in the group $\Homeo_0(\S^2, \omega)$.  Then, there exists a point $x\in\S^2$ such that $h^{d!}(x)\neq x$.  For such a point and for any integers $0\leq p<q\leq d$, we have $h^{q-p}(x)\neq x$. By composing with $h^p$, we also have $h^q(x)\neq h^p(x)$. Therefore, the points $x, h(x), \dots, h^d(x)$ are pairwise distinct. Let $B$ be a small ball centered at $x$, such that the closure $\overline{B}$ of $B$ is $d$-displaced by $h$. 

Let $\eps>0$.  We choose $\delta'>0$ so small that any map $f$ such that $d_{C^0}(f,h)<\delta'$ must also $d$-displace  $\overline{B}$. 

The next lemma says roughly that a $C^0$-small element of $\Ham(\S^2,\omega)$ is Hofer-close to being supported in $B$. 

\begin{lemma}\label{lemma:hofer-close-to-supported-in-ball} Let $B$ be any open topological disc. For all $\eps'>0$, there exists $\delta>0$, such that for all $g\in\Ham(\S^2, \omega)$ with $d_{C^0}(g,\id)<\delta$, there is $\phi\in\Ham(\S^2, \omega)$ supported in $B$ such that $d_H(\phi,g)\leq\eps'$.
\end{lemma}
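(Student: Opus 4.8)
The plan is to prove Lemma~\ref{lemma:hofer-close-to-supported-in-ball} by a ``fragmentation'' argument: decompose a $C^0$-small area-preserving diffeomorphism $g$ into a product of area-preserving diffeomorphisms, each supported in a small displaceable disc, and then conjugate each factor into $B$ at a bounded Hofer cost. First I would fix, once and for all, a finite open cover $\{U_1,\dots,U_N\}$ of $\S^2$ by embedded topological discs, each of area less than $\tfrac14$ say, together with a subordinate partition-of-unity-type device producing a smooth isotopy-theoretic fragmentation. Concretely, if $d_{C^0}(g,\id)$ is small enough then $g$ is isotopic to the identity through $C^0$-small maps, and by the standard fragmentation lemma for area-preserving diffeomorphisms (in the form used, e.g., in Fathi's work on the Calabi invariant and in our earlier article) one can write $g = g_1 g_2 \cdots g_N$ where each $g_j \in \Ham(\S^2,\omega)$ is supported in $U_j$ and, crucially, each $g_j$ is $C^0$-small: $d_{C^0}(g_j,\id) \le \varepsilon''(\delta)$ with $\varepsilon''(\delta)\to 0$ as $\delta\to 0$. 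The $C^0$-smallness of the factors is the point that requires care; it follows from the quantitative version of fragmentation where the pieces are obtained by a conjugation/interpolation scheme that does not amplify displacement.

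The next step is to convert $C^0$-smallness of each $g_j$ into Hofer-smallness \emph{after} moving its support. A $C^0$-small map supported in $U_j$ need not have small Hofer norm; but it does have small \emph{Calabi invariant} and, more usefully, it displaces very little area: there is a subdisc $V_j \subset U_j$, of area at most $\varepsilon'''(\delta)$, outside of which $g_j$ agrees with a map whose support is a single small disc. More precisely I would argue: since $g_j$ moves every point by at most $\varepsilon''$, the region $\{x : g_j(x)\neq x\}$ is contained in the $\varepsilon''$-neighborhood of its topological ``essential support,'' and using an area-preserving isotopy one can write $g_j = a_j b_j$ where $a_j$ is supported in a disc $W_j$ of area $\le C\varepsilon''$ and $b_j$ is a ``shift'' whose Hofer norm is $\le C\varepsilon''$ as well (a small push can be realized by a Hamiltonian whose oscillation is controlled by the displaced area, exactly as in the proof of the energy–capacity type estimates and Lemma~\ref{lem:transport_energy}). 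Then I would apply Lemma~\ref{lem:transport_energy}: there is $\chi_j\in\Ham(\S^2,\omega)$ with $\chi_j(W_j)\subset B$ and $\|\chi_j\|\le \area(W_j)+\varepsilon \le C\varepsilon''+\varepsilon$, so that $\chi_j a_j \chi_j^{-1}$ is supported in $B$, and by conjugation-invariance $\|a_j\| = \|\chi_j a_j \chi_j^{-1}\|$ is unchanged while $d_H\big(a_j,\ \chi_j a_j \chi_j^{-1}\big)\le 2\|\chi_j\|$ by \eqref{eq:Hofer-identity2}. (One must choose the $W_j$ small enough that $\area(W_j)<\tfrac12$, which holds once $\delta$ is small; and one needs all these discs to be simultaneously pushable into $B$, which is fine since $B$ is a nonempty open disc.)

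Assembling: set $\phi := \big(\chi_1 a_1 \chi_1^{-1}\big)\big(\chi_2 a_2 \chi_2^{-1}\big)\cdots\big(\chi_N a_N \chi_N^{-1}\big) \in \Ham(\S^2,\omega)$, which is supported in $B$. Then using the triangle inequality \eqref{eq:Hofer-identity1} repeatedly,
\[
d_H(\phi, g) \le \sum_{j=1}^N \Big( d_H\big(a_j, \chi_j a_j \chi_j^{-1}\big) + \|b_j\| \Big) \le \sum_{j=1}^N \big( 2\|\chi_j\| + C\varepsilon'' \big) \le N\big(2C\varepsilon'' + 2\varepsilon + C\varepsilon''\big),
\]
which is $\le \varepsilon'$ provided $\delta$ (hence $\varepsilon''=\varepsilon''(\delta)$) and the auxiliary $\varepsilon$ are chosen small enough relative to $N$ and $\varepsilon'$. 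This yields the lemma.

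The main obstacle I anticipate is the first step: producing a fragmentation $g = g_1\cdots g_N$ in which the factors $g_j$ are genuinely $C^0$-small, not merely supported in small sets. The naive fragmentation (cutting $g$ by a partition of unity applied to a generating isotopy) produces factors supported in the $U_j$ but gives no control on their $C^0$-size. The fix is to fragment not $g$ itself but a \emph{short Hamiltonian isotopy} from $\id$ to $g$: write $g=\varphi^1_F$ with $F$ chosen so that the isotopy $\{\varphi^t_F\}$ stays $C^0$-close to $\id$ for all $t$ (possible because $g$ is $C^0$-close to $\id$ and $\Ham(\S^2,\omega)$ is locally $C^0$-contractible through small maps), then fragment the isotopy; each piece $g_j$ is then a time-one map of an isotopy that never leaves a small $C^0$-neighborhood of $\id$, hence $d_{C^0}(g_j,\id)$ is small. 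Carrying out this ``small-isotopy fragmentation'' carefully — including the measurable/smooth interpolation needed to keep everything area-preserving — is the technical heart of the argument, and it closely parallels the corresponding step in \cite[Section~4]{CGHS}.
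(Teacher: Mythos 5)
Your overall skeleton (a quantitative $C^0$ fragmentation, then push each small-support factor into $B$ at low Hofer cost) is the right shape and matches what the paper does at a high level: the paper first invokes a known two-piece $C^0$-fragmentation lemma (Lemma~\ref{lemma:C0-frag}, from \cite{Sey13}) to write $g = g_1 g_2$ with each $g_i$ supported in a fixed disc and $C^0$-small, and then cites (a mild extension of) \cite[Lemma 4.6]{CGHS}, which is precisely the statement that a $C^0$-small map supported in a disc is Hofer-close to a map supported in $B$. However, your middle step --- the heart of the argument --- contains a genuine error.

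Specifically, the claim that a $C^0$-small area-preserving map $g_j$ supported in a disc $U_j$ ``displaces very little area,'' i.e.\ can be written as $g_j=a_j b_j$ with $a_j$ supported in a disc of area $\le C\varepsilon''$ and $b_j$ Hofer-small, is not justified by the sentence about the $\varepsilon''$-neighborhood of the ``essential support.'' The set $\{x: g_j(x)\neq x\}$ of a $C^0$-small map can have area comparable to $\area(U_j)$ (e.g., $g_j$ could be a tiny rotation moving every point of $U_j$ except its center), so there is no a priori small-area disc containing the support. The passage from $C^0$-smallness to Hofer-controllability is exactly the nontrivial content of \cite[Lemma 4.6]{CGHS}, and its proof there is a genuine grid/iterated-fragmentation argument depending on carefully chosen parameters (called $N$ and $m$ in that proof, cf.\ the paper's footnote); it is not a one-line reduction to a small essential support. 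As written, your proof silently assumes what that lemma proves. To repair, you would either need to cite \cite[Lemma 4.6]{CGHS} for this middle step (as the paper does), or actually reproduce a grid-type fragmentation argument in which $C^0$-smallness is used to cut $g_j$ into many pieces each supported in a cell of a fine grid, and then control the Hofer cost of conjugating all the cells into $B$ and reassembling --- neither of which is accomplished by the ``essential support'' argument. Your first step (getting $C^0$-small factors from fragmentation) is also not automatic, but you correctly flag it as an issue; the paper handles it by citing \cite[Prop.~3.1]{Sey13}, which already gives a two-piece fragmentation with the needed $C^0$ control, so there is no need for the $N$-piece small-isotopy scheme you sketch.
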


A similar result was proved in \cite[Lemma 4.6]{CGHS} but only for maps $g$ supported in the northern hemisphere. By conjugating by an appropriate area preserving map, this particular case implies that Lemma \ref{lemma:hofer-close-to-supported-in-ball} holds for maps $g$ supported in any topological disk of area $\frac12$.  In fact, the factor $\frac{1}{2}$ here is not essential to the proof of \cite[Lemma 4.6]{CGHS}: the exact same argument, which we omit\footnote{To help the reader who reads the argument in \cite[Lemma 4.6]{CGHS}, we note that the only change is that the factors of $1/2$, which come from the fact that the northern hemisphere has area $1/2$, see the end of the second paragraph of the proof there, must be changed to some number $\ell < 1$.  This change can be accommodated by choosing what are called $N$ and $m$ in the proof to be such that $\ell/N < \text{area}(B), \ell/m < 1/2$ and $4 \ell \frac{N+1}{m}  < \epsilon.$  With the preceding changes understood, the argument can then be repeated verbatim.}     
for brevity, shows
that it also holds for maps supported in an embedded disk of any area 
The general case then immediately follows from the next fragmentation lemma: indeed, given the lemma below, and given $g$, we can first fragment $g$ into maps supported on embedded disks and then approximate each of these maps by maps supported in $B$.  

\begin{lemma}\cite[Prop 3.1]{Sey13}\label{lemma:C0-frag} There exists two open topological embedded discs $D_1,D_2$ which cover $\S^2$, such that for any $\alpha>0$, there exists $\delta>0$ such that for any $g\in\Ham(S^2, \omega)$ satisfying $d_{C^0}(g, \id)<\delta$, there exists $g_1,g_2\in\Ham(\S^2, \omega)$, with $\supp(g_i)\subset D_i$ and $d_{C^0}(g_i,\id)< \alpha$ for $i=1,2$, such that $g=g_1\circ g_2$.
\end{lemma}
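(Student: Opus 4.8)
The plan is to reduce the fragmentation of $g$ to a single $C^0$-small, area-preserving \emph{extension} problem, and then to solve that problem inside one of the two discs. First I would fix the cover: take $D_1$ and $D_2$ to be the complements in $\S^2$ of two disjoint small closed discs $K_1 := \S^2\setminus D_1$ and $K_2 := \S^2\setminus D_2$. Then the $D_i$ are open topological discs, $D_1\cup D_2=\S^2$, $K_1\subset D_2$ and $K_2\subset D_1$. Now I would use that $d_{C^0}$ is right-invariant, i.e. $d_{C^0}(\phi\rho,\psi\rho)=d_{C^0}(\phi,\psi)$, to reduce as follows: it suffices, given $\alpha>0$, to produce $\delta>0$ and, for every $g$ with $d_{C^0}(g,\id)<\delta$, a single $g_2\in\Ham(\S^2,\omega)$ with $\supp(g_2)\subset D_2$, with $d_{C^0}(g_2,\id)<\alpha/2$, and with $g_2=g$ on a neighborhood of $K_1$ large enough to contain $g^{-1}(K_1)$. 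Indeed, setting $g_1:=g\,g_2^{-1}$, one has $g=g_1\circ g_2$; moreover $g_1=\id$ on $g(\text{that neighborhood})\supset K_1$, so $\supp(g_1)\subset D_1$; and by right-invariance $d_{C^0}(g_1,\id)=d_{C^0}(g,g_2)\le \delta+\alpha/2<\alpha$ as soon as $\delta<\alpha/2$. So the whole statement comes down to this one-map extension.

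For the extension step I would fix nested closed discs $K_1\subset\mathrm{int}(K_1')\subset K_1'\subset\mathrm{int}(K_1'')\subset K_1''\subset D_2$, and shrink $\delta$ so that both $g$ and $g^{-1}$ map $K_1'$ into $\mathrm{int}(K_1'')$; then $g|_{K_1''}$ is a $C^0$-small area-preserving embedding of the disc $K_1''$ into $D_2$. The map $g_2$ is obtained by extending the germ of $g$ along $K_1$ to a diffeomorphism supported in $\mathrm{int}(K_1'')$, hence in $D_2$: one keeps $g_2=g$ on a genuine neighborhood of $K_1'$, keeps $g_2=\id$ near $\partial K_1''$, and interpolates on the annulus $K_1''\setminus\mathrm{int}(K_1')$ between $g$ near $\partial K_1'$ and the identity near $\partial K_1''$, correcting the interpolation so that it preserves $\omega$. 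Such an area-preserving interpolation exists because in dimension two there is no flux obstruction to filling an annulus between two prescribed orientation-preserving boundary diffeomorphisms by an area-preserving diffeomorphism; and it can be arranged to be $C^0$-small because $g$ is $C^0$-close to the identity, so the two pieces of boundary data, $g|_{\partial K_1'}$ and $\id|_{\partial K_1''}$, are $C^0$-close to one another. A self-contained construction of exactly this kind of extension is given in \cite[Prop.\ 3.1]{Sey13}; alternatively, one can first perform the analogous fragmentation in the category of measure-preserving homeomorphisms and then invoke the $C^0$-density of $\Diff(\S^2,\omega)$ in $\Homeo_0(\S^2,\omega)$ to smooth the piece supported in $D_2$, noting that no smoothing is needed on the region where that piece already equals the smooth map $g$.

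The main obstacle, and the reason this is not completely routine, is obtaining $C^0$-control on the interpolation: one has no a priori bound on the derivatives of $g$, so the area form $g_2^*\omega$ produced by a naive interpolation can be far from $\omega$, and a Moser-type correction of it need not be $C^0$-small (the primitive of the defect form is controlled by the $L^\infty$-size of the defect, not just its total mass, so concentration phenomena can make it large). The remedy is to keep the interpolation annulus $K_1''\setminus\mathrm{int}(K_1')$ of small area and to build the interpolation so that each point moves only a small amount, which is possible precisely because $g$ is $C^0$-close to the identity; with the $C^0$-small extension in hand, the reduction of the first paragraph finishes the proof.
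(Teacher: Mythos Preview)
The paper does not give its own proof of this lemma: it is stated with the citation \cite[Prop.\ 3.1]{Sey13} and used as a black box in the proof of Lemma~\ref{lemma:hofer-close-to-supported-in-ball}. So there is nothing in the paper to compare your argument against directly.

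Your outline is a reasonable sketch of the standard strategy for such a $C^0$-fragmentation result, and the reduction in your first paragraph is clean and correct. You are also right that the entire content of the lemma lies in the extension step, and you correctly identify the obstacle: a naive interpolation followed by a Moser correction gives no $C^0$-control, since one has no handle on derivatives of $g$. However, your resolution of this obstacle is not really an argument --- saying that one ``keeps the interpolation annulus of small area and builds the interpolation so that each point moves only a small amount'' is a restatement of the goal rather than a mechanism for achieving it, and in the end you defer to the very reference \cite[Prop.\ 3.1]{Sey13} that the paper cites. So your write-up is best read as motivation for why the cited result has the form it does, plus a correct reduction, rather than as an independent proof; if that is your intent, it would be clearer to say so explicitly and simply invoke \cite{Sey13} for the extension step, as the paper does.
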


Having established Lemma \ref{lemma:hofer-close-to-supported-in-ball}, we can now continue the proof of Proposition \ref{prop:C0-continuity-elsewhere}.
Let $\delta>0$ be as provided by 
Lemma \ref{lemma:hofer-close-to-supported-in-ball} for $\eps'=\frac{\eps}d$.
 We may assume without loss of generality that $\delta<\delta'$.
Then, by Lemma \ref{lemma:hofer-close-to-supported-in-ball}, for any Hamiltonian diffeomorphism $g$ with $d_{C^0}(g,\id)<\delta$, there exists $\phi\in\Ham(\S^2, \omega)$ supported in $B$ satisfying $d_H(\phi, g)\leq \frac{\eps}d$. Now let $f, g\in \Ham(\S^2, \omega)$ be such that $d_{C^0}(f,h)<\delta$ and $d_{C^0}(g,\id)<\delta$. By Hofer continuity of $\eta_d$ and Lemma \ref{lemma:eta-d-displacement_v1}, 
it follows that
\begin{align*}
  |\eta_d(gf)-\eta_d(f)|&\leq |\eta_d(gf)-\eta_d(\phi f)| + |\eta_d(\phi f)-\eta_d(f)|\\
                        & \leq d\, d_H(gf, \phi f) + 0= d\,d_H(g,\phi)\leq \eps.
\end{align*}
This concludes the proof in the case where $h$ is not of finite order.

\medskip

{\em Step B.  The finite-order case.}  

We will now conclude the argument by reducing to the case where $h$ is of finite order to the case where it is not. Let $h$ be of finite order and let $\eps>0$. We may pick a Hamiltonian diffeomorphism $\psi$ such that $\|\psi\|<\frac\eps3$ and $h\psi$ is not of finite order\footnote{For the benefit of the reader, we briefly sketch why such a $\psi$ exists. Since $h$ has finite order, all $x\in\S^2$ are periodic and we let $\ell$ be the maximal period of a point. Then, the set of points of period $\ell$ is open, because it is $\{x\in\S^2: h^k(x)\neq x, \forall k=1,\dots, \ell-1\}$. It follows that if we fix a point $x$ of period $\ell$, there exists an open set $U$ containing $x$ such that $h^\ell|_U=\id_U$ and $U, \dots, h^{\ell-1}(U)$ are all disjoint. Now, let $\psi$ be a $C^1$-small (hence Hofer small) map supported in $U$ which coincides with an irrational rotation around $x$ in a smaller open subset $V\subset U$. Then, $h\psi$ does not have finite order. Indeed, for any $n\in\N$, $y\in V\setminus \{x\}$, we have $(h\psi)^n(y)=\psi^{n/\ell}(y)$ if $\ell$ divides $n$ and $(h\psi)^n(y)\notin U$ otherwise. In both cases, $(h\psi)^n(y)\neq y$. Thus, such $\psi$ suits our needs.}
 Then, by Step A, there exists $\delta$ such that for any $f', g'$ satisfying $d_{C^0}(f',h\psi)<\delta$ and $d_{C^0}(g', h)<\delta$, we have $|\eta_d(g'f')-\eta_d(f')|<\frac\eps3$.

Now take $f, g$ as in the statement of the proposition.  We now apply the triangle inequality to obtain 
\[ |\eta_d(gf) - \eta_d(f)| \le | \eta_d(gf) - \eta_d(gf\psi)| + |\eta_d(gf\psi) - \eta_d(f\psi)| + |\eta_d(f \psi) - \eta_d(f) | .\]
By Hofer continuity and the above estimate on $\|\psi\|$, we have
\[ | \eta_d(gf) - \eta_d(gf\psi)| \le \frac{\eps}{3}, \quad | \eta_d(f\psi) - \eta_d(f)| \le \frac{\eps}{3}.\]
Thus, to finish the proof of the proposition, it remains to show that
\[  |\eta_d(gf\psi) - \eta_d(f\psi)| \le \frac{\eps}{3}.\]
This follows from the previous paragraph, since if $d_{C^0}(f,h)<\delta,$ then $d_{C^0}(f\psi, h\psi)<\delta$.
\end{proof}

\section{The quasi-isometry type of the kernel of Calabi}
\label{sec:kappol}

Equipped with our new spectral invariants, we now prove Theorem~\ref{theo:QI-ker-Cal}.

\subsection{Homogenization and monotone twists}

\subsubsection{A combinatorial model} 
\label{sec:comb}

We begin by recalling the combinatorial model of Theorem 6.1 of \cite{CGHS}, which gives an explicit formula for $c_d(H)$ where $H:\S^2 \rightarrow \R$ is a monotone twist, and which we will need below and for the proof of Theorem~\ref{theo:non-simplicity} as well.     

 Here and below we use the notations of \cite[Section 5.2]{CGHS}.  To summarize, recall that a {\bf lattice path} $P$ is the graph of a piecewise linear function $Y: [0,d] \rightarrow  \mathbb{R}_{\ge 0}$, such that the vertices of $P$ are at integer lattice points; the number $d$ is called the {\bf degree} of the path and is assumed an integer below.  A lattice path is called {\bf concave} if it never crosses below any of its tangent lines.  We can think of a lattice path as a collection of maximal line segments, called {\bf edges}, joined end to end.  We regard any edge as an integer multiple $m_{p,q}$ of a primitive vector $(q,p)$; these vectors are directed with the convention that $q \ge 1.$

To any concave lattice path, we associate a number $j(P)$ as follows.  We form the region $R_+$ bounded by the $x$-axis, the line $x = d$, and the part of $P$ above the $x$-axis, we form the region $R_-$ bounded by the axes and the part of $P$ below the $x$-axis, we define $j_+$ to be the number of lattice points in $R_+$, not including lattice points on $P$, and we define $j_-$ to be the number of lattice points in $R_-$, not including the lattice points on the $x$-axis.  Finally, we define $j(P) := j_+ - j_-$.  See Figure \ref{fig:combinatorial-index}.
We define the {\bf combinatorial index}  $I(P)$ by $$I(P) : = 2 j(P) -d .$$  

 \begin{figure}[h!]
 \centering 
 \def\svgwidth{0.5\textwidth} 
\begingroup%
  \makeatletter%
  \providecommand\color[2][]{%
    \errmessage{(Inkscape) Color is used for the text in Inkscape, but the package 'color.sty' is not loaded}%
    \renewcommand\color[2][]{}%
  }%
  \providecommand\transparent[1]{%
    \errmessage{(Inkscape) Transparency is used (non-zero) for the text in Inkscape, but the package 'transparent.sty' is not loaded}%
    \renewcommand\transparent[1]{}%
  }%
  \providecommand\rotatebox[2]{#2}%
  \newcommand*\fsize{\dimexpr\f@size pt\relax}%
  \newcommand*\lineheight[1]{\fontsize{\fsize}{#1\fsize}\selectfont}%
  \ifx\svgwidth\undefined%
    \setlength{\unitlength}{75.13666952bp}%
    \ifx\svgscale\undefined%
      \relax%
    \else%
      \setlength{\unitlength}{\unitlength * \real{\svgscale}}%
    \fi%
  \else%
    \setlength{\unitlength}{\svgwidth}%
  \fi%
  \global\let\svgwidth\undefined%
  \global\let\svgscale\undefined%
  \makeatother%
  \begin{picture}(1,0.69626761)%
    \lineheight{1}%
    \setlength\tabcolsep{0pt}%
    \put(0,0){\includegraphics[width=\unitlength,page=1]{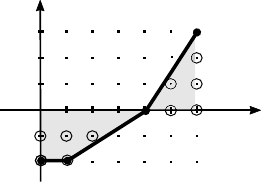}}%
    \put(0.60285862,0.57377551){\color[rgb]{0,0,0}\makebox(0,0)[lt]{\lineheight{1.25}\smash{\begin{tabular}[t]{l}$P$\end{tabular}}}}%
    \put(0.75850718,0.2900258){\color[rgb]{0,0,0}\makebox(0,0)[lt]{\lineheight{1.25}\smash{\begin{tabular}[t]{l}$d$\end{tabular}}}}%
    \put(0.10876709,0.29301464){\color[rgb]{0,0,0}\makebox(0,0)[lt]{\lineheight{1.25}\smash{\begin{tabular}[t]{l}$0$\end{tabular}}}}%
    \put(0.6675432,0.40162053){\color[rgb]{0,0,0}\makebox(0,0)[lt]{\lineheight{1.25}\smash{\begin{tabular}[t]{l}$R_+$\end{tabular}}}}%
    \put(0.26086351,0.2055008){\color[rgb]{0,0,0}\makebox(0,0)[lt]{\lineheight{1.25}\smash{\begin{tabular}[t]{l}$R_-$\end{tabular}}}}%
  \end{picture}%
\endgroup%

 \caption{The lattice points which contribute to the count for $j(P)$ are circled. Here, $j_+(P)=5$, $j_-(P)=5$, $d=6$, thus $j(P)=0$ and $I(P)=-6$.}\label{fig:combinatorial-index}
\end{figure}

Now let $H = \frac{1}{2}h$ be a monotone twist. 
Assume that $h(-1) = h'(-1) = 0, h'' > 0$, and $h'(1)$ is an integer.  We call such a monotone twist {\bf nice}.  We call a lattice path $P$ {\bf compatible} with $h$ if for every edge $m_{p,q} (q,p)$, there exists some $z_{p,q}$ such that $h'(z_{p,q} ) = p/q.$  If $P$ is a concave lattice path, compatible with a nice monotone twist, then we define the {\bf action} $\mathcal{A}(P)$ by defining
\begin{equation}
 \mathcal{A}(q,p) := \frac{1}{2} \left( p (1-z_{p,q} ) + q h(z_{p,q})\right) ,\label{eq:action-lattice-path}
\end{equation}
and extending by linearity.

We can now state the formula from     
\cite[Thm. 6.1]{CGHS} for computing the invariants $c_{d,k}$.  That is, for all degree $d\geq 1$ and all grading $k$, when $H$ is a nice monotone twist we have 
 \begin{equation}
 \label{eqn:specform}
 c_{d,k}(H)=\max\{\cal A(P):2j(P)-d=k\},
 \end{equation}
where the maximum is over concave lattice paths that are compatible with $h$.  We are justified in invoking this formula because our nice monotone twists are in $\mathcal{H}$, and the $c_{d,k}$ defined here extend the definition from \cite{CGHS}, see the discussion at the beginning of Section~\ref{sec:newspec}.

\subsubsection{$\zeta_d$ of monotone twists}

We now apply the combinatorial model from the previous section to compute the invariants $\zeta_d$ in the case of monotone twists.
In particular, we can now give the promised proof of Proposition~\ref{prop:zeta-d}.

\begin{proof}[Proof of Proposition~\ref{prop:zeta-d}]
  First note that  $\zeta_d$ is Lipschitz continuous with respect to the uniform norm on $C^\infty(\S^2)$; this follows from Hofer continuity of $c_d$. Thus, both sides of the equation in Proposition \ref{prop:zeta-d} are continuous with respect to uniform norm. Since any monotone twist $H$ can be approximated uniformly by nice monotone twists, we deduce that it is sufficient to prove Proposition \ref{prop:zeta-d} for nice monotone twists. For the rest of the proof, we therefore assume that $H$ is a nice monotone twist. 
  
 We note that the index $I(P)  = 2 j(P) - d$ from \ref{sec:comb}
 is equivalently given by the following formula
\begin{equation}
I(P)=2A+y+w-e,\label{eq:index-aire}
\end{equation}
where $y$ and $w$ are respectively the minimal and maximal vertical coordinate of $P$, $e$ is the number of edges in $P$ and $A$ is the (signed) area of the region enclosed by $P$, the $x$-axis and the vertical line $\{d\}\times\R$. Indeed, by shifting the path if necessary, it suffices 
to prove this when $y = 0$, in which case it follows from Pick's formula that this corresponds to the definition given in \ref{sec:comb}.

 Let us introduce some notation. For any $i\in\{1,\dots, d\}$, we set $a_i=Y(i)-Y(i-1)$, where $Y$ is the function $[0,d]\to\R$, such that $P=\mathrm{graph}(Y)=\{(x,Y(x)):x\in[0,d]\}$. Then,
 \begin{align*}
   2A&= 2dy + a_1 +(2a_1 +a_2)+\dots +(2a_1+\dots+2a_{d-1}+a_d)\\
   &=2dy+ (2d-1)a_1+(2d-3)a_2+\dots+a_d
 \end{align*}
 Using (\ref{eq:index-aire}) and the relation $w=y+a_1+\dots+a_d$, the condition $I(P)=-d$ becomes
\[2dy+2y+2da_1+(2d-2)a_2+\dots+2a_d-e=-d.\]
Therefore, under this condition, we may express $y$ in terms of the $a_i$ as:
  \begin{equation}
   \label{eq:formula-for-y}
y=-\tfrac{1}{d+1}(da_1+(d-1)a_2+\dots a_d+\tfrac{d-e}2)=\tfrac{e-d}{2(d+1)}+ \sum^d_{i=1} (- a_i+\tfrac{i}{d+1}a_i).   
\end{equation}

Let us now turn our attention to the action. It is given by (\ref{eq:action-lattice-path}): 
we state here a reformulated version %
 with the $a_i$, namely
 \[\cal A(P)=y+\sum^d_{i=1}\tfrac12(a_i(1-z_i)+h(z_i)),\]
where $z_i$ is the unique point such that $h'(z_i)=a_i$. 
Using (\ref{eq:formula-for-y}), we obtain 
\begin{align}
  \cal A(P)&=\tfrac{e-d}{2(d+1)} + \sum^d_{i=1} (\tfrac{i}{d+1}a_i - a_i)+\sum^d_{i=1}\tfrac12(a_i(1-z_i)+h(z_i)),\nonumber\\
  &=\tfrac{e-d}{2(d+1)}+\tfrac12\sum_{i=1}^{d}\left(h'(z_i)(-1+\tfrac{2i}{d+1}-z_i)+h(z_i)\right)\label{eq:action-comb}
\end{align}
Note that the term $\tfrac{e-d}{2(d+1)}$ belongs to $(-\tfrac12, 0]$. It vanishes when all edges in $P$ have horizontal displacement $1$. 

By \eqref{eqn:specform}, 
$c_d = c_{d,-d}$ is obtained 
by maximizing the value of $\cal A(P)$ over all possible paths with $I(P)=-d$.

To compute this maximum, 
consider the function 
\[ F(t_1, \dots, t_d)=\sum_{i=1}^{d}\left(h'(t_i)(-1+\tfrac{2i}{d+1}-t_i)+h(t_i)\right),\] 
defined on the set $E$ of tuples $(t_1, \dots, t_d)$ such that $-1\leq t_1\leq\dots\leq t_d\leq 1$. We have 
\begin{equation}
\label{eqn:A}
\cal A(P)=\tfrac12 F(z_1, \dots, z_d)+\tfrac{e-d}{2(d+1)}.
\end{equation}
We may compute the partial derivatives of $F$,
\[\frac{\partial F}{\partial t_i}=(-1+\tfrac{2i}{d+1} - t_i)h''(t_i),\]
and we see that it is positive for $t_i<-1+\tfrac{2i}{d+1}$ and negative for $t_i>-1+\tfrac{2i}{d+1}$. This implies that $F$ attains its maximum at $(t_1,\dots, t_n)$ such that $t_i=-1+\tfrac{2i}{d+1}$, for all $i$. Thus,
\begin{equation}
\label{eqn:max}
\max_EF=\sum_{i=1}^d h(-1+\tfrac{2i}{d+1}).
\end{equation}

Now it follows from the versions of \eqref{eqn:specform}, \eqref{eqn:A}, %
\eqref{eqn:max} for $nH$, and the fact that $e-d \le 0$, that for all $n$,
\begin{equation}
\label{eq:c_d(nH)2}
\frac1nc_{d}(nH)\leq \tfrac12\sum_{i=1}^d h(-1+\tfrac{2i}{d+1}).
\end{equation}
To proceed, let $a_1^n\leq \dots\leq a_d^n$ be sequences in $\frac1n\N$ which converge respectively to $-1+\tfrac{2}{d+1}, -1+\tfrac{4}{d+1},\dots, 1-\tfrac{2}{d+1}$. Set $z_i^n$ such that $h'(z_i^n)=a_i^n$. Then,
\[F(z_1^n,\dots, z_d^n)\stackrel{n\to\infty}\longrightarrow \max_EF=\tfrac12\sum_{i=1}^d h(-1+\tfrac{2i}{d+1}).\]
Moreover, we can construct a lattice path, for $nH$, such that \eqref{eqn:A} holds with the $z_i = z^n_i$, and $e = d$.    
We therefore deduce $\frac1nc_{d}(nH)\geq \tfrac12F(z_1^n,\dots, z_d^n)$ and we conclude in combination with \eqref{eq:c_d(nH)2} that
\[ \text{lim}_{n \to \infty} \frac1nc_{d, -d}(nH) = \tfrac12\sum_{i=1}^d h(-1+\tfrac{2i}{d+1}),\]
as desired.
\end{proof}

\begin{remark}
\begin{enumerate}[(i)]
\item It follows from the previous proposition that we have the following Composition property for monotone twists: For any two monotone twists $\phi, \psi$ we have \begin{equation}
\label{eqn:comp}
\mu_d(\phi \psi )= \mu_d(\phi) + \mu_d(\psi).
\end{equation}
Indeed, it follows from Proposition \ref{prop:zeta-d} that $\zeta_d(H_1 + H_2) = \zeta_d(H_1) + \zeta_d(H_2)$, hence \eqref{eqn:comp}.  
\item  For any monotone twist $\varphi \in \Ham(\S^2, \omega)$,
it can easily be shown that $\mu_d(\varphi) = \lim_{n\to \infty} \frac{c_d(\tilde\varphi^n)}{n}$, i.e.\ the $\limsup$ in \eqref{eq:mu_d} is in fact a limit for such $\varphi$.
\end{enumerate}
\end{remark}

  \subsection{A family of Hamiltonians}
  We will now construct a certain family of Hamiltonian diffeomorphisms which will be used to establish Theorem \ref{theo:QI-ker-Cal}.
Let $U\subset \S^2$ be a proper open set containing the North Pole $p_+$ and let $\iota>0$ be an integer. For all $i\in\N$, we denote 
 $$D_i :=  \{ (z, \theta) \in \S^2 :  1 - \tfrac{2}{d_i} < z \le 1\},$$
 where $d_i=2^{\iota+i+1}.$ 
 
 We choose $\iota$ large enough so that all the $D_i$'s are contained in $U$.  Note that each $D_i$ is an embedded disc and   
 $$\area(D_i) =  \frac{1}{d_i}.$$

 The next lemma states the properties of our family of maps.
  \begin{lemma}\label{lem:computation}
   There exist autonomous Hamiltonians $(H_i)_{i\in \N}$,  such that $H_i$ is supported in $D_i$ and the following properties are satisfied for all $t\geq 0$, $i\in\N$:
  \begin{enumerate}
    \item $ \varphi^t_{H_i}$ is a monotone twist, for all $t\in \R$,
\item $\Cal(\varphi_{H_i}^t)=\frac t2$,  %
    \item $d_H(\varphi^t_{H_i}, \id) \leq 2t + 2$,  
  \item If $j >i$, then $\mu_{d_i}(\varphi_{H_j}^t)=-t\frac{d_i}2$
  \item $\mu_{d_i}(\varphi_{H_i}^t)>-t\frac{d_i}2$ and if $j<i$, then
    $\mu_{d_i}(\varphi_{H_j}^t)\geq-t\frac{5d_i}{16}$.
   \item $\varphi^t_{H_i} \varphi^s_{H_j} =  \varphi^s_{H_j} \varphi^t_{H_i}$ for all $t,s \in \R$.
  \end{enumerate}
\end{lemma}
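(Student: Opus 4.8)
\emph{Proof plan.}
The plan is to realise each $H_i$ as a monotone twist supported in $D_i$ whose profile is (a smoothing of) an affine ramp, normalised so that the Calabi identity (2) holds. Properties (1), (2), (4), (5), (6) then follow quickly from results already established, while (3) requires a separate energy--displacement argument which is the conceptual heart of the lemma.

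\emph{Construction and properties (1), (6).} Write $c_i:=1-\tfrac{2}{d_i}$, so that $D_i=\{z>c_i\}$ and $\area(D_i)=1/d_i$. Let $h_i\colon[-1,1]\to\R$ be a smooth function, vanishing on a neighbourhood of $[-1,c_i]$, with $h_i'\ge 0$ and $h_i''\ge 0$, which agrees with the affine function $z\mapsto d_i^2(z-c_i)$ away from a small neighbourhood of $c_i$; since $\int_{c_i}^{1}d_i^2(z-c_i)\,dz=2$, the smoothing can be done so that $\int_{-1}^1 h_i=2$ and $h_i(1)\le 2d_i+1$. Set $H_i:=\tfrac12 h_i$. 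Then $H_i$ is supported in $D_i$ and is a monotone twist Hamiltonian, so $\varphi^t_{H_i}=\varphi^1_{tH_i}$ is a monotone twist (property (1)); and since every $H_i$ is a function of $z$ alone, all the fields $X_{H_i}$ are pointwise proportional to $\partial_\theta$, hence the flows pairwise commute (property (6)).

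\emph{Properties (2), (4), (5).} For (2), one computes directly, using $tH_i$ as the supported generating Hamiltonian, $\Cal(\varphi^t_{H_i})=\int_{\S^2}tH_i\,\omega=\tfrac t4\int_{-1}^1 h_i\,dz=\tfrac t2$. For (4): if $j>i$ then $d_j\ge 2d_i>d_i+1$, so $\area(D_j)=1/d_j<1/(d_i+1)$, and the Calabi property in Proposition~\ref{prop:mu_d_properties}(3) applies to $\varphi^t_{H_j}$, giving $\mu_{d_i}(\varphi^t_{H_j})=-d_i\,\Cal(\varphi^t_{H_j})=-t\,d_i/2$. For (5) I would use $\mu_d(\varphi^t_H)=\zeta_d(tH)-t\,d\int_{\S^2}H\,\omega$ from Proposition~\ref{prop:mu_d_properties}(4) together with Proposition~\ref{prop:zeta-d}: since $tH_j$ is again a monotone twist, $\mu_{d_i}(\varphi^t_{H_j})=t\bigl(\zeta_{d_i}(H_j)-d_i/2\bigr)$ with $\zeta_{d_i}(H_j)=\tfrac12\sum_{l=1}^{d_i}h_j(q_l)$, $q_l:=-1+\tfrac{2l}{d_i+1}$. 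For $j=i$ the only sample point in $\mathrm{supp}(h_i)$ is $q_{d_i}=\tfrac{d_i-1}{d_i+1}\in(c_i,1)$, so $\zeta_{d_i}(H_i)=\tfrac12 h_i(q_{d_i})>0$ and hence $\mu_{d_i}(\varphi^t_{H_i})>-t\,d_i/2$. For $j<i$ one estimates the Riemann‑type sum from below; because $h_j$ is the affine ramp of slope $d_j^2$, a direct computation of the (few) contributing terms gives $\zeta_{d_i}(H_j)\ge \tfrac{d_i(d_i+4)}{4(d_i+1)}\ge \tfrac{3d_i}{16}$, the worst case being $j=i-1$, whence $\mu_{d_i}(\varphi^t_{H_j})\ge t\bigl(\tfrac{3d_i}{16}-\tfrac{d_i}{2}\bigr)=-t\tfrac{5d_i}{16}$. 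It is essential here that $h_j$ be ``spread out'' (an affine ramp) rather than concentrated near $z=1$, since a right‑concentrated convex profile would make $\zeta_{d_i}(H_j)$ arbitrarily small.

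\emph{Property (3) --- the main obstacle.} The naive bound $d_H(\varphi^t_{H_i},\id)\le\|tH_i\|_{1,\infty}=\tfrac t2 h_i(1)\approx t\,d_i$ is far too weak, and correcting $H_i$ by a multiple of $z$ (the only tool of this kind, since $\pi_1(\Ham(\S^2,\omega))$ is generated by the full rotation) does not push the bound below $\sim t\,d_i$. The point instead is to exploit the smallness of $\area(D_i)$. Fix $N:=d_i-1$, so that $N\,\area(D_i)<1$, and pick pairwise disjoint closed discs $D_i^{(0)}=D_i,D_i^{(1)},\dots,D_i^{(N-1)}$ of area $1/d_i$; by Lemma~\ref{lem:transport_energy} there are $\psi_l\in\Ham(\S^2,\omega)$ with $\psi_l(D_i)=D_i^{(l)}$, $\|\psi_l\|\le 1/d_i+\eps_l$, $\psi_0=\id$. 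Writing $g_0:=\varphi^{t/N}_{H_i}$, so that $\varphi^t_{H_i}=g_0^{N}$, the maps $\psi_l g_0\psi_l^{-1}=\varphi^1_{\frac tN H_i\circ\psi_l^{-1}}$ have pairwise disjoint supports, hence commute, and their product is $\widetilde g:=\varphi^1_{G}$ with $G=\tfrac tN\sum_{l}H_i\circ\psi_l^{-1}$; since $H_i\ge 0$, this gives $\|\widetilde g\|\le\|G\|_{1,\infty}=\tfrac{t}{2N}h_i(1)\le \tfrac{t(2d_i+1)}{2(d_i-1)}\le 2t$. On the other hand, iterating \eqref{eq:Hofer-identity1} and using \eqref{eq:Hofer-identity2},
\[
 d_H\bigl(g_0^{N},\widetilde g\bigr)\le\sum_{l=0}^{N-1}d_H\bigl(g_0,\psi_l g_0\psi_l^{-1}\bigr)\le\sum_{l=0}^{N-1}2\|\psi_l\|\le \frac{2(d_i-1)}{d_i}+2\sum_l\eps_l<2+\eps .
\]
Combining, $d_H(\varphi^t_{H_i},\id)\le\|\widetilde g\|+d_H(\varphi^t_{H_i},\widetilde g)<2t+2+\eps$ for every $\eps>0$, which is (3). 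I expect the only genuine work to be the bookkeeping in the lower bound for $\zeta_{d_i}(H_j)$ in (5) and the verification (via Lemma~\ref{lem:transport_energy}) that the $\psi_l$ can be chosen with $\|\psi_l\|$ as close to $\area(D_i)$ as we wish; the energy--displacement step just displayed is the essential new idea.
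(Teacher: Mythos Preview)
Your proposal is correct and follows essentially the same route as the paper: the same affine-ramp construction for $h_i$, the same use of the Calabi property and Proposition~\ref{prop:zeta-d} for items (4) and (5), and the same Sikorav-type energy--displacement trick for item (3). The paper packages the last step as a standalone lemma (if $\mathrm{supp}(H)\subset D$ with $\area(D)<1/N$ then $d_H(\varphi^1_H,\id)\le \tfrac1N\max H+2$) and carries out the arithmetic for the $j<i$ case of (5) in full, but the ideas and the resulting bounds are the same as yours.
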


\begin{proof} 
Consider the functions $f_i : [-1, 1] \rightarrow \R$ defined by  
  \[f_i(z) :=
     \begin{cases}
       0 &   z\in [-1,  1 - \frac{2}{d_i}], \\
      d_i^2\left(z - (1 - \frac{2}{d_i})\right),&   z \in [1 - \frac2{d_i}, 1].
     \end{cases}
   \]
These functions are non smooth but to ensure that our future Hamiltonians are smooth, we approximate them by smooth functions $h_i$ satisfying the following conditions:
   \begin{enumerate}[(i)]
   \item $h_i'(z), h_i''(z) \geq 0$ and $|f_i(z) - h_i(z)| \leq  \frac1{d_i}$ for all $z \in [-1, 1]$, 
   \item  The support of $h_i$ is contained in the interior of $[1 -\frac{2}{d_i}, 1]$.
   \item $\int_{-1}^1h_i(z)\,dz=2$.
   \end{enumerate}

Let $H_i(z,\theta) = \frac12h_i(z)$ and observe that $H_i$ is supported in $D_i$ and that $\varphi_{H_i}^t$ is a monotone twist for any $t>0$ (whence item 1 of the lemma).

  The fact that $\Cal(\varphi_{H_i}^t)=\frac t2$ readily follows from Property (iii).

  To prove the third item, we will need the following lemma, whose proof we postpone to the end of this section. The idea behind this lemma goes back to Sikorav, who implemented it in the case of $\R^{2n}$ in \cite{Sikorav-Pisa}; see also \cite[Chap. 5 - 5.6]{hofer-zehnder}.

\begin{lemma}\label{lemma:divide-N}
Let $H:\S^2 \rightarrow \R$ denote an autonomous Hamiltonian such that the support of $H$ is contained in a disc $D$ with the property that $\area(D) < \frac{1}{N}$.  Then,
$d_H(\varphi^1_H, \id) \le \frac{1}{N} \max(H) + 2$.
\end{lemma}

Since the support of $H_i$ is a disk of area less than $\frac1{d_i}$, this lemma leads to
\[  d_H(\varphi_{H_i}^t,\id)\leq \frac{1}{d_i}\max (tH_i)+2\leq \left(1+\frac1{2d_i^2}\right) t +2 \leq 2t + 2,\]
which implies the third item.

Item 4 is a consequence of the Calabi property from \ref{prop:mu_d_properties}, because $\varphi_{H_j}^t$ is supported in $D_j$ and $\mathrm{Area}(D_j)<\frac1{d_i+1}$ for $j>i$.  The last item of Lemma \ref{lem:computation} is also easy to check. Indeed, since the Hamiltonians $H_i$ are functions of $z$, they all Poisson commute, hence their flow commute.

   There remains to prove item 5. Since 
   \[\zeta_{d_i}(tH_j)=\mu_{d_i}(\varphi_{H_j}^t)+d_i\,\int_{\S^2}tH_j\,\omega=\mu_{d_i}(\varphi_{H_j}^t)+t\frac{d_i}2,\]
   we just need to prove that $\zeta_{d_i}(H_i) > 0$ and $\zeta_{d_i}(H_j)\geq \frac{3d_i}{16}$ when $i > j.$ 
  
As already mentioned, the above conditions (i) and (ii) ensure that $\varphi^t_{ H_j}$ is a monotone twist for all $t>0$. 
  By Proposition \ref{prop:zeta-d}, 
  $$\zeta_{d_i}(H_j) = \tfrac12\sum_{m=1}^{d_i} h_j\left(-1 + \frac{2m}{d_i+1}\right).$$
We can rewrite the above sum as 
\begin{align*}
 \zeta_d( H_j) = \tfrac12\sum_{k=0}^N  h_j\left(-1 + \frac{2(d_i-k)}{d_i + 1}\right), 
\end{align*}
where $N$ is the largest integer such that $-1 + \frac{2(d_i-N)}{d_i + 1}$ is in the support of $h_j$,  that is $-1 + \frac{2(d_i-N)}{d_i + 1} > 1 - \frac{2}{d_j}$.  A simple computation reveals that 
\begin{equation}\label{eq:N+1}
N+1 = \frac{d_i}{d_j}.
\end{equation}

Consider the (non-smooth) function $f_j(z)$.  By the definition of $h_j$, we have $h_j \geq f_j - \frac{1}{d_j}$ and so 
\begin{equation}
\zeta_{d_i}(H_j) \geq \left[\frac12\sum_{k=0}^N f_j\left(-1 + \frac{2(d_i-k)}{d_i + 1} \right) \right] - \frac12\frac{N+1}{d_j}.\label{eq:estimating-zeta}
\end{equation}
The first term on the right hand side in the above equation may be computed explicitly. Indeed,  $f_j(z)$ is linear in $z$ and so the above is just an arithmetic sum.  First, note that $-1 + \frac{2(d_i - k)}{d_i + 1} = 1 - \frac{2}{d_{i}+1}
(k+1)$, and so  $\sum_{k=0}^N f_j\left(-1 + \frac{2(d_i-k)}{d_i + 1} \right) = \sum_{k=0}^N f_j\left(1 - \frac{2}{d_{i}+1}
(k+1) \right).$  Next, one can easily check that $f_j\left( 1 - \frac{2}{d_{i}+1}
(k+1) \right) = 2d_j -\frac{2d_j^2}{d_i +1} (k+1)$.  Thus,
   \begin{align}\label{eq:arithmetic-sum}
 \sum_{k=0}^N f_j&\left( 1 - \frac{2}{d_{i}+1}
(k+1) \right) = \sum_{k=0}^N \left[2d_j -\frac{2d_j^2}{d_i +1} (k+1)\right] \nonumber\\
                 &= 2d_j(N+1)- \frac{2d_j^2}{d_i +1} \frac{(N+1)(N+2)}2\nonumber\\
     &= d_i\left(2- \frac{d_i+d_j}{d_i+1}\right).
   \end{align}

For $i=j$,  (\ref{eq:N+1}), (\ref{eq:estimating-zeta}) and (\ref{eq:arithmetic-sum}) yield 
\[\zeta_{d_i}(H_i)\geq d_i\left(1-\frac{d_i}{d_{i}+1}-\frac1{2d_i^2}\right)>0,\]
as desired, which implies the first part of item 5.

For $i>j$,  (\ref{eq:N+1}), (\ref{eq:estimating-zeta}) and (\ref{eq:arithmetic-sum}) give
\begin{align*}
\zeta_{d_i}(H_i)&\geq  \frac{1}{2} d_i\left(2-\frac{d_i+d_j}{d_{i}+1}-\frac 1{d_j^2}\right)\\ &\geq \frac{1}{2} d_i\left(2-\frac{d_i+d_j}{d_{i}}-\frac18\right)= \frac{1}{2}d_i\left(\frac78-\frac{d_j}{d_i}\right)\geq \frac{3d_i}{16}.
\end{align*}
where we used $\frac{1}{d_j^2}\leq \frac{1}{8}$ 
for the second inequality and $\frac{d_j}{d_i}\leq \frac12$ for the last inequality.
This concludes the proof of item 5.  
\end{proof}

We end this section (and the proof of Lemma \ref{lem:computation}) with the proof of Lemma \ref{lemma:divide-N}.

\begin{proof}[Proof of Lemma \ref{lemma:divide-N}] 
 Since $\area(D) < \frac{1}{N}$, 
 by Lemma \ref{lem:transport_energy}, we can find  $\psi_1, \ldots, \psi_N \in \Ham(\S^2, \omega)$ such that 
\begin{enumerate}
\item $\psi_i(D) \cap \psi_j(D) = \emptyset$,
 \item $d_H(\psi_i, \id) \le \frac{1}{N}$.
\end{enumerate}

Define the Hamiltonians $F_i = \frac{1}{N} H \circ \psi_i^{-1}$ and note that $\varphi^1_{F_i} = \psi_i \varphi^{\frac{1}{N}}_{H} \psi_i^{-1}$.  Let  $F = \sum_{i=1}^N F_i$.  Observe that,  since $F_i$ is supported in $\psi_i(D)$, the $F_i$'s have disjoint supports, and so $\max(F) = \max(F_i) = \frac{1}{N} \max(H)$.  Hence, 
$$d_H(\varphi^1_F, \id) \leq \frac{1}{N} \max(H),$$
where the last line follows from the definition of the Hofer norm.
  Therefore, to prove the claim it is sufficient to show that $d_H(\varphi^1_H, \varphi^1_F) \leq 2$. This can be proved using Identities (\ref{eq:Hofer-identity1}) and (\ref{eq:Hofer-identity2}) as follows:
\begin{align*}
d_H(&\varphi^1_H, \varphi^1_F)  =  d_H\left(\prod_{i=1}^N \varphi^{\frac{1}{N}}_{H}, \prod_{i=1}^N \psi_i \varphi^{\frac{1}{N}}_{H} \psi_i^{-1}\right)\\
& \leq \sum_{i=1}^N d_H( \varphi^{\frac{1}{N}}_{H} ,\psi_i \varphi^{\frac{1}{N}}_{H} \psi_i^{-1}) \leq \sum_{i=1}^N 2 \, d_H(\psi_i, \id) \leq 2.
\end{align*} 
\end{proof}

 \subsection{Quasi-flats in the kernel of Calabi}\label{sec:proof_embedding}
 We are now ready to present a proof of Theorem \ref{theo:QI-ker-Cal}.  First note that without loss of generality 
 we may assume that the open set $U$ contains the North pole $p_+$. This allows us to use the constructions of the preceding section.
 
 We begin by proving the first part of the theorem, regarding the quasi-flat rank.
 
 \begin{proof}[Proof of Theorem \ref{theo:QI-ker-Cal}(a)]

Let $\R^n_+ := \{(t_1, \ldots, t_n) : t_i \ge 0\}$.  We equip $\R^n_+$ with the distance induced by the sup norm, that is we define the distance between $(t_1, \ldots, t_n), (s_1, \ldots, s_n) \in \R^n_+$ to be   $$\Vert (t_1, \ldots, t_n) - (s_1, \ldots, s_n) \Vert_{\infty} = \max \{\vert t_i - s_i \vert :  i= 1, \ldots, n \}.$$
The mapping 
\begin{align*}
\Phi : \R^n_+ &\rightarrow \Ham(\S^2, \omega) \\
 (t_1, \ldots, t_n) &\mapsto  \varphi^{t_1}_{H_{1}} \circ \ldots \circ \varphi^{t_n}_{H_{n}}\circ\varphi_{H_{n+1}}^{-(t_1+\dots+t_n)},
\end{align*}
takes values $\Ham_U(\S^2,\omega)$. Moreover, since the $H_i$ all have the same integral over $\S^2$, the mapping $\Phi$ takes values in the kernel of the Calabi homomorphism.

We will show that there exists an invertible $ n\times n$ matrix $A$ such that $\Phi$ satisfies the following inequality.
\begin{equation}\label{eq:quasi_isom-matrix}
      \Vert A({\bf t} - {\bf s}) \Vert_\infty \leq d_H(\Phi({\bf t}), \Phi( {\bf s} )) \leq 2n\,  (2\Vert {\bf t} - {\bf s} \Vert_\infty +1+\tfrac1n) ,
   \end{equation}
where ${\bf t}$, ${\bf s}$ stand for $(t_1, \ldots, t_n)$ and $(s_1, \ldots, s_n)$, respectively. As a consequence, 
\begin{equation*}\label{eq:quasi_isom}
     \frac1{\|A^{-1}\|_{\mathrm{op}}} \Vert {\bf t} - {\bf s} \Vert_\infty \leq d_H(\Phi({\bf t}), \Phi( {\bf s} ) ) \leq 4n  \Vert {\bf t} - {\bf s} \Vert_\infty +2n+2 ,
   \end{equation*}
where $\|A^{-1}\|_{\mathrm{op}}$ is the operator norm of $A^{-1}$, as a linear map of the normed space $(\R^n, \|\cdot\|_\infty)$.

  The above clearly implies that $\Phi$ is a quasi-isometric embedding of $(\R^n_+, \Vert \cdot \Vert_\infty )$ into $(\Ham(\S^2, \omega),d_H)$.  This establishes Theorem \ref{theo:QI-ker-Cal} because $(\R^{n}, \Vert \cdot \Vert_\infty )$ quasi-isometrically embeds into  $(\R^{2n}_+, \Vert \cdot \Vert_\infty )$; an explicit formula for such a quasi-isometric embedding is given by  
\begin{align*}
{\bf L} :  \R^n  & \rightarrow \R^{2n}_+\\
(x_1, \ldots, x_n) & \mapsto (L(x_1), \ldots, L(x_n)),
\end{align*}
where $L : \R \rightarrow \R^2_+$ is defined as

  \[   L(x) :=
     \begin{cases}
       (0, -x),   &   x \le 0, \\
       (x, 0),  &  x \ge 0.
     \end{cases}
   \]
 For a proof of the fact that $\bf L$ is a quasi-isometric embedding see \cite[Lem.\  8.12]{Stoj-Zhang}.

 We now turn our attention to \eqref{eq:quasi_isom-matrix}, beginning with the following proof of the inequality on its right-hand side:
\begin{align*}
 d_H &\left(\Phi({\bf t}), \Phi( {\bf s}) \right)
  =  \Vert \Phi({\bf t}) \circ \Phi( {\bf s})^{-1}\Vert =  \Vert \varphi^{t_1 -s_1}_{H_1} \ldots \varphi^{t_n -s_n}_{H_n}\varphi_{H_{n+1}}^{-\sum_i(t_i-s_i)}\Vert \\
     &\leq \Vert \varphi_{H_{n+1}}^{-\sum_i(t_i-s_i)}\Vert + \sum_{i=1}^n \Vert \varphi^{t_i -s_i}_{H_i} \Vert  \leq  \sum_{i=1}^n \left( 4|t_i - s_i| + 2 \right)+2\\
  &\leq 2n \left( 2\Vert {\bf t} - {\bf s} \Vert_\infty +1 + \frac{1}{n}\right).
\end{align*}

Above, the second equality on the first line is a consequence of the last item in Lemma \ref{lem:computation}, the first inequality on the second line  follows from triangle inequality and the second inequality in the second line is a consequence of Lemma \ref{lem:computation}, item 3.   This proves the right hand side of \eqref{eq:quasi_isom-matrix}.

It remains to prove the left hand side.  Let us consider the following two families of monotone twists.
\[\alpha({\bf t})=\varphi^{t_1}_{H_{1}}\ldots \varphi^{t_n}_{H_{n}},\quad \beta({\bf t})=\varphi_{H_{n+1}}^{(t_1+\dots+t_n)}.\]
By definition $\Phi({\bf t})=\alpha({\bf t})\beta({\bf t})^{-1}$ and since monotone twists commute 
\begin{align*}
  d_H(\Phi({\bf t}), \Phi( {\bf s} ))&=\|\beta({\bf t})\alpha({\bf t})^{-1}\alpha({\bf s})\beta({\bf s})^{-1}\|
  =\|(\alpha({\bf t})\beta({\bf s}))^{-1}\alpha({\bf s})\beta({\bf t})\|\\&= d_H(\alpha({\bf t})\beta({\bf s}),\alpha({\bf s})\beta({\bf t})).
\end{align*}
Now, combining the previous equality with the second item of Proposition \ref{prop:mu_d_properties} gives 
\begin{equation}\label{eq:lower_bound1}
\max_{i=1,\dots,n} \left\vert \; \frac{\mu_{d_i}(\alpha({\bf t})\beta({\bf s}))}{d_i} - \frac{\mu_{d_i}(\alpha({\bf s})\beta({\bf t}))}{d_i} \; \right\vert \le  d_H(\Phi({\bf t}), \Phi( {\bf s} ) ).
\end{equation}
By the fourth item 
of Proposition \ref{prop:mu_d_properties} and \eqref{eqn:comp}, we can write,
$$\mu_{d}(\alpha({\bf t})\beta({\bf s})) - \mu_{d}(\alpha({\bf s})\beta({\bf t}))  = \sum_{j=1}^n  \left(\mu_{d}(\varphi^{1}_{H_j})-\mu_d(\varphi^{1}_{H_{n+1}})\right)(t_j - s_j),$$
for any $d$.
It follows from the above that the left hand side in \eqref{eq:lower_bound1} coincides with the quantity $$\Vert A ({\bf t} - {\bf s})\Vert_{\infty},$$ where $A$ is the matrix whose $ij$ entry (for $1\leq i,j\leq n$) is
$$A_{ij} = \frac{\mu_{d_i}(\varphi^1_{H_j})-\mu_{d_i}(\varphi^1_{H_{n+1}})}{d_i}.$$

The fourth item in Lemma \ref{lem:computation} tells us that $\mu_{d_i}(\varphi^1_{H_{n+1}}) = \mu_{d_i}(\varphi^1_{H_{j}})=-\frac{d_i}2$, for $j > i$.  It follows that  $A_{ij}=0$ for $j>i$, i.e. the matrix $A$ is lower triangular.  From the fifth item of the same lemma, we deduce that the diagonal entries of $A$ are non-zero.  Hence, $A$ is invertible which proves (\ref{eq:quasi_isom-matrix}). We have completed the proof of Theorem \ref{theo:QI-ker-Cal}(a).
\end{proof}

\subsection{The kernel of Calabi is not coarsely proper}
In this section, we prove the remainder of Theorem \ref{theo:QI-ker-Cal}, i.e. that the kernel of the Calabi Homomorphism defined on $\Ham_U(\S^2,\omega)$ is not coarsely proper. Recall from the introduction that a metric space $(X,d)$ is said to be coarsely proper if there exists $R_0 > 0$ such that every bounded subset of $(X,d)$ can be covered by finitely many balls of radius $R_0$. %

\begin{proof}[Proof of Theorem~\ref{theo:QI-ker-Cal}(b)] For any fixed $r >0$ consider the set  $$X_r:= \{ \varphi_{H_1}^{-r}\varphi^r_{H_{2i}} : i\geq 1\},$$ 
where the $H_{i}$ are the Hamiltonians provided by Lemma \ref{lem:computation}.  
 
 \begin{claim}\label{cl:coarsely_proper}
 The set $X_r$ is $\frac{3r}{16}$ 
 separated, i.e. for $i \neq j$ we have $$ \frac{3r}{16} \le d_H(\varphi_{H_1}^{-r}\varphi^r_{H_{2i}}, \varphi_{H_1}^{-r}\varphi^r_{H_2j}) .$$
 \end{claim}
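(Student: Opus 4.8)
The plan is to compute the Hofer distance between two elements of $X_r$ from below, using the homogenized invariants $\mu_{d_i}$. Fix $i \ne j$ and assume, without loss of generality, that $i < j$. Since left-translation by $\varphi_{H_1}^{-r}$ is a Hofer isometry, we have
\[
d_H(\varphi_{H_1}^{-r}\varphi^r_{H_{2i}}, \varphi_{H_1}^{-r}\varphi^r_{H_{2j}}) = d_H(\varphi^r_{H_{2i}}, \varphi^r_{H_{2j}}).
\]
By the Hofer continuity property of $\mu_{d}$ (Proposition~\ref{prop:mu_d_properties}, item~2), for any degree $d$ we have
\[
d_H(\varphi^r_{H_{2i}}, \varphi^r_{H_{2j}}) \ge \frac{1}{d}\left| \mu_{d}(\varphi^r_{H_{2i}}) - \mu_{d}(\varphi^r_{H_{2j}}) \right|.
\]
The natural choice is $d = d_{2i} = 2^{\iota + 2i + 1}$, and the key is that the relative sizes of $2i$ and $2j$ are controlled: since $i < j$, the index $2j$ is strictly larger than $2i$, so item~4 of Lemma~\ref{lem:computation} applies to give $\mu_{d_{2i}}(\varphi^r_{H_{2j}}) = -r\,\frac{d_{2i}}{2}$, while item~5 (first part) gives $\mu_{d_{2i}}(\varphi^r_{H_{2i}}) > -r\,\frac{d_{2i}}{2}$. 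This already shows the two are distinct, but we need the quantitative separation, which is where the explicit lower bound from the proof of Lemma~\ref{lem:computation} comes in.

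The main point is to extract a \emph{uniform} lower bound on $\mu_{d_{2i}}(\varphi^r_{H_{2i}}) + r\,\frac{d_{2i}}{2}$, i.e.\ on $\zeta_{d_{2i}}(rH_{2i})$. Tracing through the proof of item~5 of Lemma~\ref{lem:computation}: with the roles played by ``$i$'' and ``$j$'' there both equal to $2i$ (the diagonal case), one gets $\zeta_{d_{2i}}(H_{2i}) \ge d_{2i}\bigl(1 - \frac{d_{2i}}{d_{2i}+1} - \frac{1}{2d_{2i}^2}\bigr)$. But this bound tends to $0$ as $i \to \infty$, so it is \emph{not} good enough for a uniform separation. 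Instead, the right estimate to use is the off-diagonal one: we should take $d = d_{2i}$ but measure against $H_{2j}$ versus $H_{2i}$ differently — actually, the correct reading is to compare $\varphi^r_{H_{2i}}$ and $\varphi^r_{H_{2j}}$ via the invariant $\mu_{d_{2j}}$ rather than $\mu_{d_{2i}}$. With $d = d_{2j}$ and the index $2i < 2j$, item~5 (second part) of Lemma~\ref{lem:computation} gives $\mu_{d_{2j}}(\varphi^r_{H_{2i}}) \ge -r\,\frac{5 d_{2j}}{16}$, while item~5 (first part) gives $\mu_{d_{2j}}(\varphi^r_{H_{2j}}) > -r\,\frac{d_{2j}}{2} = -r\,\frac{8 d_{2j}}{16}$. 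Hmm, that goes the wrong way; the cleaner comparison is: $\mu_{d_{2i}}(\varphi^r_{H_{2j}}) = -r\,\frac{d_{2i}}{2}$ exactly (item~4, since $2j > 2i$), whereas by item~5, second part applied with roles $i \leftrightarrow 2i$, $j \leftrightarrow$ any smaller index — we need $\mu_{d_{2i}}(\varphi^r_{H_{2i}})$ bounded \emph{above} $-r\,\frac{5d_{2i}}{16}$. Rechecking the proof of Lemma~\ref{lem:computation}, item~5: the off-diagonal bound $\zeta_{d_i}(H_j) \ge \frac{3 d_i}{16}$ for $i > j$ translates to $\mu_{d_i}(\varphi^r_{H_j}) \ge -r\,\frac{d_i}{2} + r\,\frac{3 d_i}{16} = -r\,\frac{5 d_i}{16}$ (this is exactly item~5's second clause). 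So taking $d = d_{2j}$: we get $\mu_{d_{2j}}(\varphi^r_{H_{2i}}) \ge -r\,\frac{5 d_{2j}}{16}$ (off-diagonal, since $2j > 2i$) and $\mu_{d_{2j}}(\varphi^r_{H_{2j}}) = ?$ — for the diagonal we only have $> -r\,\frac{d_{2j}}{2}$, not quite enough.

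The correct index to use, then, is $d_{2i}$ paired with the observation that $H_{2j}$ is ``above'' and $H_{2i}$ is diagonal, combined with the \emph{arithmetic} from \eqref{eq:arithmetic-sum}. So the final plan: take $d = d_{2i}$, use item~4 to get $\mu_{d_{2i}}(\varphi^r_{H_{2j}}) = -r\,\frac{d_{2i}}{2}$ exactly, and for $\mu_{d_{2i}}(\varphi^r_{H_{2i}})$ revisit \eqref{eq:estimating-zeta}–\eqref{eq:arithmetic-sum} \emph{keeping} the better constant: the dominant term is $\frac{1}{2} d_{2i}(2 - \frac{d_{2i}+d_{2i}}{d_{2i}+1} - \frac{1}{d_{2i}^2}) = \frac12 d_{2i}(\frac{2}{d_{2i}+1} - \frac{1}{d_{2i}^2})$, which still decays — so genuinely the only robust separation is the \emph{off-diagonal} one. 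Hence: compare via $d = d_{2i}$ the pair $\varphi^r_{H_{2i}}$ (where now $H_{2i}$ plays the role of the smaller-index ``$H_j$'' relative to... no). I will resolve this by taking $d = d_{2i+1}$ in the comparison — wait, there is no $H$ at half-integer. Cleanest: compare $\varphi^r_{H_{2i}}$ and $\varphi^r_{H_{2j}}$ using $\mu_{d}$ for $d = d_{2j-1}$ if it exists; since all $H_k$ are defined for $k \in \mathbb{N}$, and $d_k = 2^{\iota+k+1}$, we have $2i < 2j-1 < 2j$ when $j > i$... but $H_{2j-1}$ is not among our maps, which is fine — $\mu_{d_{2j-1}}$ is still a well-defined invariant. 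Then $\mu_{d_{2j-1}}(\varphi^r_{H_{2j}}) = -r\frac{d_{2j-1}}{2}$ (item~4, since $2j > 2j-1$) and $\mu_{d_{2j-1}}(\varphi^r_{H_{2i}}) \ge -r\frac{5d_{2j-1}}{16}$ (item~5 off-diagonal, since $2j-1 > 2i$), giving $d_H \ge \frac{1}{d_{2j-1}}(r\frac{d_{2j-1}}{2} - r\frac{5 d_{2j-1}}{16}) = \frac{3r}{16}$, exactly as claimed. \textbf{The main obstacle} is precisely this bookkeeping: one must choose the auxiliary degree so that \emph{one} of the two maps falls under the exact formula of item~4 and the \emph{other} under the off-diagonal inequality of item~5, since the diagonal bound degrades in the limit; once that choice is made the computation is immediate from Lemma~\ref{lem:computation} and the Hofer continuity of $\mu_d$.
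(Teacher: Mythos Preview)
Your final argument is correct and is essentially the paper's own proof: pick an intermediate index $k$ with $2i < k < 2j$ (you take $k = 2j-1$, the paper allows any such $k$), then use item~4 of Lemma~\ref{lem:computation} to get $\mu_{d_k}(\varphi^r_{H_{2j}}) = -r\,d_k/2$ exactly and item~5 (second clause) to get $\mu_{d_k}(\varphi^r_{H_{2i}}) \ge -5r\,d_k/16$, yielding the bound $3r/16$ via Hofer continuity of $\mu_{d_k}$. The several false starts (trying $d = d_{2i}$ with the diagonal estimate, then $d = d_{2j}$) should be excised; only the last paragraph is the proof.
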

 
 The above claim implies that the set $X_r$, which is bounded by Lemma \ref{lem:computation}, 
 cannot be covered by finitely many balls of radius $\frac{r}{16}$. 
 Since this holds for every value of $r$, and since $X_r$ is included in $\ker(\Cal)$, we conclude that $\ker(\Cal)$ is not coarsely proper, hence the Theorem.

 It remains to prove Claim \ref{cl:coarsely_proper}.  
 \begin{proof}[Proof of Claim \ref{cl:coarsely_proper}]
   Suppose that $ i<j$, pick $k \in \N$ such that $2i < k <2j$ and consider $d_k$ as in Lemma \ref{lem:computation}.
   By the Hofer continuity property of $\mu_{d_k}$, from Proposition \ref{prop:mu_d_properties}, we have 
  $$\frac{1}{d_k}| \mu_{d_k}(\varphi^r_{H_{2i}}) - \mu_{d_k}(\varphi^r_{H_{2j}})| \leq d_H(\varphi^r_{H_{2i}}, \varphi^r_{H_{2j}})= d_H(\varphi_{H_1}^{-r}\varphi^r_{H_{2i}}, \varphi_{H_1}^{-r}\varphi^r_{H_{2j}}).$$
  By Lemma \ref{lem:computation}, we have  $ \mu_{d_k}(\varphi^r_{H_{2j}})=-r\frac{d_k}2$ and 
  $ \mu_{d_k}(\varphi^r_{H_{2i}}) \geq -r\frac{5d_k}{16}$. 
 Thus, 
 $\frac{1}{d_k}| \mu_{d_k}(\varphi^r_{H_{2i}}) - \mu_{d_k}(\varphi^r_{H_{2j}})|\geq \frac {3r}{16}$ 
 which completes the proof.
\end{proof}
 
 We have now proved Theorem~\ref{theo:QI-ker-Cal}(b).
  \end{proof}

 \section{Non-simplicity of $\Homeo_0(\S^2,\omega)$}
 \label{sec:nonsimp}
 
 We conclude by proving Theorem \ref{theo:non-simplicity}.
 
 \subsection{Outline of the argument}
 \label{sec:outline}
 
 To prove non-simplicity of $\Homeo_0(\S^2, \omega)$, we explicitly construct a proper normal subgroup which we call the group of {\bf finite energy homeomorphisms} and denote by $\FHomeo(\S^2, \omega)$.  We introduced these homeomorphisms in \cite{CGHS} where we proved that they form a proper normal subgroup of the compactly supported area-preserving homeomorphisms of the disc.  Here, we will give a slight variant of the definition in \cite{CGHS} which is more natural from the point of view of Hofer's geometry.

\begin{definition}\label{def:FHomeo}
We say $\varphi \in \Homeo_0(\S^2, \omega)$ is a finite-energy homeomorphism if there exists a sequence of Hamiltonian diffeomorphisms $\{\varphi_i\}_{i\in \N}$ which is bounded with respect to Hofer's distance and which converges uniformly to $\varphi$.  We denote by $\FHomeo(\S^2, \omega)$ the set of all finite-energy homeomorphisms.
\end{definition}

Theorem \ref{theo:non-simplicity} follows immediately from the following result, which will occupy the remainder of this section.

\begin{theo}\label{theo:FHomeo_proper}
$\FHomeo(\S^2, \omega)$ is a proper normal subgroup of $\Homeo_0(\S^2, \omega)$.
\end{theo}

We prove the above using arguments similar to those  given in \cite{CGHS}.  Here is a brief outline.  As we shall see, it is not hard to show that $\FHomeo(\S^2, \omega)$ forms a normal subgroup of $\Homeo_0(\S^2, \omega)$; the main challenge is proving that it is proper.

To do this, we use the invariant $\eta_d: \Ham(\S^2, \omega)\rightarrow \R$.  We showed above that this  is continuous with respect to the $C^0$ topology on $\Ham(\S^2, \omega)$ and, moreover, it extends continuously to $\Homeo_0(\S^2, \omega)$; see Proposition \ref{prop:eta_d_properties}.  A straightforward argument
shows that for any $\varphi \in \FHomeo(\S^2, \omega)$ there exists a constant $C$, depending on $\varphi$, such that for all (even) $d$ we have
\begin{equation}\label{eq:eta_d_bdd}
\frac{\eta_d(\varphi)}{d} \leq C.
\end{equation}

 We will then prove Theorem \ref{theo:FHomeo_proper} by showing that certain so-called {\bf infinite twist} homeomorphisms $\psi \in \Homeo_0(\S^2, \omega)$ satisfy the following;
\begin{equation}\label{eq:sup_linear_growth}
\lim_{d \to \infty} \frac{\eta_d(\psi)}{d} = \infty.
\end{equation}
This violates \eqref{eq:eta_d_bdd}.    This last step requires estimating $\eta_d(\psi)$ for which we rely on the combinatorial model from Section \ref{sec:comb}.

\medskip

We end this section by highlighting the differences between our proof, in this article, of non-simplicity of $\Homeo_0(\S^2, \omega)$ and the proof of non-simplicity of $\Homeo_c(\D^2, \omega) $ given in \cite{CGHS}.  In both articles we use PFH spectral invariants $c_d : C^{\infty}(\S^1 \times \S^2) \to \R$.  Given an arbitrary Hamiltonian $H$, the value of $c_d(H)$ depends on $H$ and so $c_d$ does not yield a well-defined invariant of Hamiltonian diffeomorphisms.  However, in \cite{CGHS} we overcome this problem by restricting the domain of $c_d$ to a certain class of Hamiltonians which is suitable for the purposes of that article; see \cite[Sec.\ 3.4]{CGHS}.  In the current article, we do not have the possibility of restricting the domain of $c_d$.  Instead, we work with $\eta_d$ which is well-defined for all Hamiltonian diffeomorphisms of the sphere as proved in Section \ref{sec:eta_d}.

Another difference between the two proofs is the manner in which we show properness of $\FHomeo$. In both articles this is achieved by exhibiting area-preserving homeomorphisms $\psi$ satisfying Equation \eqref{eq:sup_linear_growth}. The proof of this given in \cite{CGHS} involves verifying for certain smooth twist maps a conjecture of Hutchings, concerning recovering the Calabi invariant from the asymptotics of PFH spectral invariants, whereas our proof here, which is shorter, uses the forthcoming Claim~\ref{clm:nice}.  The proof of 
this claim, however, relies on the combinatorial model for PFH developed in \cite[Sec.\ 5]{CGHS}.  We should remark that part of the motivation for the somewhat longer argument in \cite{CGHS} was that Hutchings' conjecture is of independent interest, hence useful to verify for twist maps.

\subsection{An infinite twist is not a finite energy homeomorphism}\label{sec:an-infinite-twist}

We now carry out the above outline.  We begin by describing the infinite twist homeomorphisms $\psi$.

Denote by $p_+ \in \S^2$ the North Pole of the sphere, i.e.\ the point whose $z$-coordinate is $1$, in the cylindrical coordinate system introduced in Section \ref{sec:prelim_symp}.  We say a function $F:\S^2 \setminus \{p_+\} \rightarrow\R$ is an {\bf infinite twist Hamiltonian} if it is of the form  
\begin{equation}\label{eq:inf_twist_Hamiltonian}
F(z,\theta) = \frac 12 f(z),
\end{equation}
where $f :[-1, 1) \rightarrow \R$ is a smooth function such that $f' \ge 0, f'' \ge 0$ and
\begin{equation}\label{eq:infinite_asymptotics}
\lim_{d \to \infty}\frac1d \,f\left(1-\frac{2}{d+1}\right) = \infty.
\end{equation}
Observe that $F$ defines a smooth Hamiltonian on $\S^2 \setminus \{p_+\}$ whose flow is given by 
$$\varphi^t_F(\theta, z) = (\theta + 2 \pi t f'(z), z).$$
  We extend the flow $\varphi^t_F$ to $\S^2$ by defining $\varphi^t_F(p_+) = p_+$; this yields an area-preserving flow on $\S^2$ which is non-smooth at the point $p_+$.   We say $\psi \in \Homeo_0(\S^2, \omega)$ is an {\bf infinite twist homeomorphism} if it is of the form
\begin{equation}
\psi := \varphi^1_F
\end{equation}
for some $F$.    We will call $\psi$ an {\bf adapted} infinite twist if the corresponding $f$ satisfies the following technical hypothesis:
\[ f'\left(1 - \frac{2}{d+1}\right) \in (d+1)\N,\]
for all $d \ge 2$. 

We can now give the promised proof of the remaining theorem.

\begin{proof}[Proof of Theorem \ref{theo:FHomeo_proper}]

We begin by noting that the argument in \cite[Prop. 2.1]{CGHS}, repeated verbatim, shows that $\FHomeo(\S^2, \omega)$ forms a normal subgroup of $\Homeo_0(\S^2, \omega)$. It remains to show that it is proper.

\medskip
\emph{Step 1.  Linear growth in $\FHomeo$.}  

We first show that for any $\varphi \in \FHomeo(\S^2, \omega)$ the linear growth condition \eqref{eq:eta_d_bdd} holds.  This is an immediate consequence of the properties in Proposition \ref{prop:eta_d_properties}.  Indeed, let $\varphi \in \FHomeo(\S^2, \omega)$ and choose a sequence $\varphi_i \xrightarrow{C^0} \varphi$ that is uniformly bounded with respect to Hofer's distance.  Since the $\varphi_i$ are bounded and $\eta_d(\id) = 0$, the Hofer continuity property ensures a bound of the form $ \eta_d(\varphi_i) \le d \cdot C$ for some uniform constant $C$; then, by $C^0$ continuity, the same bound holds for $\varphi$.

\medskip
\emph{Step 2.  Superlinear growth of some infinite twists.}

It remains to prove that $\FHomeo$ is proper.  The structure of the remainder of our argument will now be to prove
properness,
assuming the technical Claim \ref{clm:whatever} below which makes use of the adapted condition, and then prove the Claim.  
From now until the end of the paper, we therefore assume that $F$ is an adapted infinite twist Hamiltonian whose support is contained in the interior of the disc $\{(z, \theta) : z \ge  \frac 78\}$ which is of area $\frac{1}{16}$.  Imposing this assumption enables us to apply the following promised technical claim.  Recall below that the $\eta_d$ are defined only for even $d$.  

\begin{claim}
\label{clm:whatever}
Fix $d \ge 4,$ define $z_0 := 1 - \frac{2}{d+1}$. Let $H$ be a smooth 
monotone twist Hamiltonian supported in a disc of area at most $1/12$.  Assume that $p := h'(z_0) \in (d+1)\N$. 
Then
\[ \eta_d(\varphi_H^1) \ge H(z_0) - \frac{d}{6}.\]
\end{claim}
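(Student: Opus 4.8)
The plan is to estimate $\eta_d(\varphi_H^1) = c_d(H) - \tfrac d2 c_2(H)$ by bounding $c_d(H)$ from below and $c_2(H)$ from above, using the combinatorial formula \eqref{eqn:specform}. Since $H$ is a monotone twist, we may assume (by uniform approximation, as in the proof of Proposition~\ref{prop:zeta-d}, using Hofer continuity of the $c_{d,k}$) that $H$ is \emph{nice}, so that the combinatorial model applies; the hypothesis $p = h'(z_0) \in (d+1)\N$ will be used to build an explicit admissible lattice path. Recall $c_d = c_{d,-d}$, and by \eqref{eqn:specform} this is the maximum of $\mathcal A(P)$ over concave lattice paths $P$ of degree $d$, compatible with $h$, with $I(P) = -d$.

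First I would lower-bound $c_d(H)$. The idea is to use the single-edge lattice path: since $p = h'(z_0) \in (d+1)\N$, write $p = (d+1)q$ for a positive integer $q$. Actually the natural choice is the path $P$ consisting of a single edge from $(0,0)$ to $(d, ?)$; to land with the right index we want an edge realizing slope near $h'(z_0)$. More carefully, I would choose the concave path whose edges have horizontal displacements all equal to $1$ and whose $i$-th slope equals $h'(-1 + \tfrac{2i}{d+1})$ — exactly the maximizing configuration from the proof of Proposition~\ref{prop:zeta-d} — but truncated/adjusted so that one edge sits at height governed by $z_0$. The cleanest route: the proof of Proposition~\ref{prop:zeta-d} shows that there is a compatible concave path $P$ with $e = d$, $I(P) = -d$, and $\mathcal A(P) = \tfrac12 \sum_{i=1}^d h(-1 + \tfrac{2i}{d+1})$. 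Since $h' \ge 0$ and $h'' \ge 0$, each term $h(-1 + \tfrac{2i}{d+1})$ is $\le h(z_0) = H(z_0)\cdot 2$... wait — I want a \emph{lower} bound on $c_d$, so I instead keep just the largest term: $\mathcal A(P) \ge \tfrac12 h(z_0) = H(z_0)$ is not quite right since $\mathcal A(P)$ is a sum of $d$ nonnegative terms the largest of which is $\tfrac12 h(1 - \tfrac{2}{d+1}) = H(z_0)$. Hence $c_d(H) \ge \mathcal A(P) \ge H(z_0)$.

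Next I would upper-bound $c_2(H)$. By \eqref{eqn:specform} with $d = 2$, $c_2(H) = c_{2,-2}(H)$ is a maximum of $\mathcal A(P)$ over concave compatible lattice paths of degree $2$ with $I(P) = -2$. Using \eqref{eqn:A} and \eqref{eqn:max} for $d=2$ (the $n=1$ application of the function $F$ in that proof), one gets $c_2(H) \le \tfrac12\big(h(-1 + \tfrac23) + h(-1 + \tfrac43)\big) = \tfrac12\big(h(-\tfrac13) + h(\tfrac13)\big)$. Now use that $H$ is supported in a disc of area at most $1/12$: this forces $h$ to be supported in an interval of length at most $2 \cdot \tfrac{1}{12} \cdot \tfrac12 \cdot \ldots$ — more precisely the support of $h$ in the $z$-variable has length $\le 2\,\mathrm{area} \le \tfrac16$, hence is contained in $[1 - \tfrac16, 1] = [\tfrac56, 1]$ (after the normalization used for the $D_i$; here one uses that monotone twists have $h$ supported near $z = 1$). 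Then $-\tfrac13$ and $\tfrac13$ lie outside the support, so $h(-\tfrac13) = h(\tfrac13) = 0$, giving $c_2(H) \le 0$?? That would make the claimed $-\tfrac d6$ term spurious, so I must have the support/normalization slightly off. The honest bound: from the Continuity property of Theorem~\ref{thm:PFHspec_initial_properties}, $c_2(H) - c_2(0) \le 2\int_{\S^1}\max(H_t)\,dt = 2\max H = h_{\max}$; but that is too crude. Instead, I would use Support-control (item~6 of Proposition~\ref{prop:more}) on a \emph{shifted} Hamiltonian. The right statement is: since $\mathrm{area}(\mathrm{supp}\,H) \le 1/12 < 1/(2+1)$, there is a function $h_0: \S^1\to\R$ (here a constant, the value of $H$ away from the disc) with $H - h_0$ supported in a disc of area $< 1/3$; then $|c_{2,k}(H - h_0)| \le 4\,\mathrm{area} \le 1/3$, and the Shift property gives $c_2(H) = c_{2,-2}(H - h_0) + 2\int h_0$. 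Choosing $h_0$ so that $H - h_0$ vanishes outside the support disc, $\int h_0$ is essentially $\int_{\S^2} H\,\omega$ up to controlled error; bounding this by $\mathrm{area}\cdot \max H \le \tfrac{1}{12}\cdot 2H(z_0)$... The main obstacle, then, is pinning down the correct normalization bookkeeping so that $\tfrac d2 c_2(H) \le \tfrac d2\cdot\big(\tfrac16 + \text{(mean term)}\big)$ and the mean term is absorbed; I expect the clean inequality to be $c_2(H) \le \tfrac13$ after using Support-control with the disc of area $\le 1/12 < 1/3$ together with $c_2(0) = 0$ (Normalization), which yields $|c_2(H)| \le 4\cdot\tfrac1{12} = \tfrac13$, whence $\eta_d(\varphi^1_H) = c_d(H) - \tfrac d2 c_2(H) \ge H(z_0) - \tfrac d2\cdot\tfrac13 = H(z_0) - \tfrac d6$. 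So the real work is: (i) justify reduction to nice monotone twists; (ii) the explicit path giving $c_d(H) \ge H(z_0)$, which is where the adapted hypothesis $p \in (d+1)\N$ enters (it guarantees the relevant vertex is a lattice point); and (iii) the Support-control estimate $|c_2(H)| \le 4\,\mathrm{area}(\mathrm{supp}\,H) \le \tfrac13$. Step~(ii) is the main obstacle, as it requires constructing a concrete concave compatible lattice path of degree $d$ with index $-d$ whose action is at least $H(z_0)$, using the integrality of $p$.
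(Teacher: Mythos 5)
Your overall framework is exactly the paper's: write $\eta_d(\varphi_H^1) = c_d(H) - \tfrac d2 c_2(H)$, bound $c_2(H)$ from above by $\tfrac13$ using Support-control (since the support area is $\le \tfrac1{12} < \tfrac1{3}$, Proposition~\ref{prop:more} gives $|c_2(H)| \le 2\cdot 2 \cdot \tfrac1{12} = \tfrac13$), and bound $c_d(H)$ from below by $H(z_0)$. Your $c_2$ bound is correct as stated. But the $c_d$ lower bound — which you yourself flag as ``the main obstacle'' — is where the argument has a genuine gap, and the ``cleanest route'' you propose is actually wrong.

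Your cleanest route invokes ``the proof of Proposition~\ref{prop:zeta-d} shows that there is a compatible concave path $P$ with $e=d$, $I(P)=-d$, and $\mathcal A(P)= \tfrac12\sum_{i=1}^d h\bigl(-1+\tfrac{2i}{d+1}\bigr)$.'' That is not what that proof shows. It constructs approximating lattice paths for $nH$ (with edge slopes $a_i^n \in \tfrac1n\N$), proves $\tfrac1n c_d(nH) \ge \tfrac12 F(z_1^n,\dots,z_d^n)$, and passes to the limit $n\to\infty$ to get a statement about $\zeta_d(H)$, not about $c_d(H)$. The exact maximizing configuration has $i$-th edge slope $h'\bigl(-1+\tfrac{2i}{d+1}\bigr)$, which is in general irrational, so there is no lattice path compatible with $h$ that realizes this action at the level of $H$ itself. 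What you can extract from that proof is the homogenized inequality $\zeta_d(H)\ge H(z_0)$, which is not the same as $c_d(H)\ge H(z_0)$. This is precisely where the integrality hypothesis $p = h'(z_0)\in (d+1)\N$ must do real work, and your first instinct (a short explicit lattice path built from $p$), which you abandoned, was the right one. The paper writes $p = a(d+1)$ and takes the concave two-edge compatible path through $(0,-a)$, $(d-1,-a)$, $(d,p-a)$: the horizontal edge is compatible because $h'$ vanishes somewhere (since $H$ is supported in a disc), the last edge has slope $p = h'(z_0)$, one computes $I(P) = 2\bigl((p-a) - da\bigr) - d = -d$, and $\mathcal A(P) = \tfrac p2(1-z_0) + \tfrac12 h(z_0) - a$, which after substituting $a = \tfrac{p}{d+1}$ and using $1-z_0 = \tfrac{2}{d+1}$ collapses exactly to $\tfrac12 h(z_0) = H(z_0)$. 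Everything else in your proposal is fine (including the reduction to nice twists by Hofer continuity), but without this explicit path the key inequality $c_d(H)\ge H(z_0)$ is unproved.
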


We defer the proof for the moment.  Assuming it, we can produce super linear growth of the $\eta_d$ as follows.  

\begin{claim}
\label{clm:nice}
$\eta_d(\varphi^1_F)  \ge \frac{1}{2} f(1 - \frac{2}{d+1}) - \frac d6$, for $d \ge 4$.
\end{claim}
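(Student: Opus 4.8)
The plan is to derive Claim~\ref{clm:nice} from Claim~\ref{clm:whatever} by a $C^0$-approximation. Although $\varphi^1_F$ is not itself the time-one flow of a smooth monotone twist Hamiltonian---since $f'(z)\to+\infty$ as $z\to 1^-$, as one sees from \eqref{eq:infinite_asymptotics}---it is a $C^0$-limit of such flows, and $\eta_d$ is $C^0$-continuous by Proposition~\ref{prop:eta_d_properties}. So fix an even $d\ge 4$ and set $z_0:=1-\tfrac2{d+1}$. For each sufficiently large $n$ I would pick $z_n\in(z_0,1)$ with $z_n\to 1$ and $f''(z_n)>0$; such a sequence exists because $f''\equiv 0$ on a left neighbourhood of $1$ would make $f'$ bounded (it is nondecreasing), contradicting \eqref{eq:infinite_asymptotics}. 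Then I would build a smooth function $h_n\colon[-1,1]\to\R$ with $h_n\equiv f$ on $[-1,z_n]$, with $h_n'\ge 0$ and $h_n''\ge 0$ everywhere, and with $h_n'(1)<\infty$: on $[z_n,1]$ one extends $f|_{[-1,z_n]}$ by prescribing $h_n''$ to be a nonnegative smooth function matching $f''$ to infinite order at $z_n$ (possible by Borel's lemma, using $f''(z_n)>0$ to keep it nonnegative near $z_n$) and then integrating twice. Put $H_n:=\tfrac12 h_n$, a smooth monotone twist Hamiltonian.

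Next I would check that each $H_n$ satisfies the hypotheses of Claim~\ref{clm:whatever} and that $\varphi^1_{H_n}\xrightarrow{C^0}\varphi^1_F$. Since $F$, hence $f$, is supported in the interior of $\{z\ge\tfrac78\}$, so is $H_n$ for $n$ large, and $\{z\ge\tfrac78\}$ is a disc of area $\tfrac1{16}\le\tfrac1{12}$. Also, for $n$ large we have $z_n>z_0$, so $h_n\equiv f$ on a neighbourhood of $z_0$; in particular $h_n'(z_0)=f'(z_0)\in(d+1)\N$ by the adapted hypothesis on $f$, and $H_n(z_0)=\tfrac12 f(z_0)$. For the convergence, observe that both $\varphi^1_{H_n}$ and $\varphi^1_F$ preserve every level circle $\{z=c\}$ and hence preserve the cap $C_n:=\{z\ge z_n\}$; on $\{z\le z_n\}$ the Hamiltonians $H_n$ and $F$ coincide, so the two flows coincide there; and for $x\in C_n$ both $\varphi^1_{H_n}(x)$ and $\varphi^1_F(x)$ lie in $C_n$. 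Since $\mathrm{diam}(C_n)\to 0$, it follows that $d_{C^0}(\varphi^1_{H_n},\varphi^1_F)\le\mathrm{diam}(C_n)\to 0$.

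Finally, applying Claim~\ref{clm:whatever} to each $H_n$ gives $\eta_d(\varphi^1_{H_n})\ge H_n(z_0)-\tfrac d6=\tfrac12 f\!\bigl(1-\tfrac2{d+1}\bigr)-\tfrac d6$, and letting $n\to\infty$, using the $C^0$-continuity of $\eta_d$ from Proposition~\ref{prop:eta_d_properties}, yields $\eta_d(\varphi^1_F)\ge\tfrac12 f\!\bigl(1-\tfrac2{d+1}\bigr)-\tfrac d6$, which is Claim~\ref{clm:nice}. The step I expect to require the most care is the construction of the approximating Hamiltonians $H_n$: one must simultaneously keep $h_n$ of class $C^\infty$ on $[-1,1]$ (this suffices for $H_n$ to be smooth on $\S^2$, since near the pole $p_+$ a function of $z$ alone is automatically a smooth function of the symplectic radial coordinate), keep it nondecreasing and convex, keep its support inside $\{z\ge\tfrac78\}$, and keep it equal to $f$ near $z_0$ so that the integrality condition $h_n'(z_0)\in(d+1)\N$ needed by Claim~\ref{clm:whatever} is inherited from $f$. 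Everything else is a formal consequence of the $C^0$-continuity of $\eta_d$ together with Claim~\ref{clm:whatever}.
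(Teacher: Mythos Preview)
Your proposal is correct and follows essentially the same approach as the paper: approximate $\varphi^1_F$ in $C^0$ by time-one maps of smooth monotone twists that agree with $f$ outside a shrinking cap around the pole, verify the hypotheses of Claim~\ref{clm:whatever} for these approximants, and pass to the limit using the $C^0$-continuity of $\eta_d$. The paper's construction is slightly less elaborate---it simply asserts the existence of smooth $f_i$ with $f_i'\ge 0$, $f_i''\ge 0$, $f_i=f$ on $[-1,1-\tfrac{1}{i}]$, without invoking Borel's lemma or singling out points where $f''>0$---but the logic is identical.
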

\begin{proof}[Proof of Claim \ref{clm:nice}]
For every $i \in \N$, let $F_i : \S^2 \rightarrow \R$ be a sequence of smooth Hamiltonians of the form

$$F_i(z, \theta) = \frac 12 f_i(z),$$
where $f_i :[-1, 1] \rightarrow \R$ is a smooth function such that $f_i' \ge 0, f_i'' \ge 0$ and 
$f_i(z) = f(z)$ for $z\in [-1, 1 - \frac 1i]$. 

Observe that $\varphi^1_{F_i} \xrightarrow{C^0} \varphi^1_F$, because $(\varphi^1_{F_i})^{-1} \circ \varphi^1_F$ is supported in the disc $\{(z, \theta) : z \ge 1 - \frac{1}{i}\}$.  Hence, by the $C^0$ continuity of $\eta_d$ established in Proposition \ref{prop:eta_d_properties}, we have
$$ \eta_d(\varphi^1_F) = \lim_{ i \to \infty} \eta_d(\varphi^1_{F_i}). %
$$
for every $d$.
Applying Claim \ref{clm:whatever} to $F_i$, for $i$ sufficiently large with respect to $d$, yields $$\eta_d(\varphi^1_{F_i}) \ge F_i\left(1 - \tfrac{2}{d+1}\right)  - \tfrac{d}{6}  = \tfrac 12 f_i \left(1 -\tfrac{2}{d+1}\right)  -\tfrac{d}{6}=  \tfrac{1}{2} f\left(1 -\tfrac{2}{d+1}\right) -\tfrac d6,$$
for $d > 3$. Hence, the claim.  
\end{proof}
It follows immediately from the previous claim that $\psi:= \varphi^1_F$ satisfies Equation \eqref{eq:sup_linear_growth} which, as explained in Step 1, implies that an adapted infinite twist $\varphi^1_F$ is not a finite-energy homeomorphism.

 To complete the proof of Theorem \ref{theo:FHomeo_proper}, it therefore remains to prove Claim \ref{clm:whatever}.

\begin{proof}[Proof of Claim \ref{clm:whatever}] Recall, from Equation \eqref{eqn:eta_d2}, that $\eta_d(\varphi) = c_{d}(H) -\frac{d}{2}c_{2}(H)$.  Hence, to prove the Claim, it is sufficient to show that the following two inequalities hold:
\begin{equation}\label{eq:bound_c_2}
 c_{2}(H) \leq \frac{1}{3}.
\end{equation}
\begin{equation}\label{eq:bound_C_d}
c_{d}(H) \ge H\left(1- \frac{2}{d+1}\right).
\end{equation}

To prove \eqref{eq:bound_c_2}, we invoke the Support-control inequality of Proposition \ref{prop:more}, which gives $c_2(H) \le 2 \cdot 2 \cdot \frac{1}{12}$, since the area of the support of $H$ is bounded by $\frac{1}{12}$.

Next, we prove \eqref{eq:bound_C_d}.  By the Continuity property of $c_d$
from Theorem \ref{thm:PFHspec_initial_properties}, we may perform a small perturbation of $h$, near $z=1$, and assume that $h'(1) \in \N$, in other words that our twist is nice.  
This allows us to apply Theorem 6.1 of \cite{CGHS} whose statement we recalled in Section \ref{sec:comb}.

Recall the notation $z_0 = 1 - \frac{2}{d+1}$.  By Theorem 6.1 of \cite{CGHS} 
we have 
$$c_{d}(H) \ge \mathcal{A}(P),$$
for any degree $d$ lattice path $P$ of combinatorial index $I(P)=-d$; see Section \ref{sec:comb}.

Recall the notation $p := h'(z_0)$, which is by assumption an integer. 
By assumption, there exists an integer $a>0$ such that $p=a(d+1)$.
Take $P$ to be the lattice path obtained by joining the lattice points $(0,-a)$, $(d-1, -a)$ and $(d,p-a)$. This is a concave lattice path made of two edges. It satisfies
\begin{align*}
  \mathcal A(P) &=\frac{p}{2} (1-z_0) + \frac12 h(z_0)-a,\\
  I(P)&=2j(P)-d= 2((p - a)-da)-d=-d.
\end{align*}
Hence, 
\begin{align*}
  c_{d}(H) &\ge  \frac{p}{2} (1-z_0) + \frac12 h(z_0) - a =  \frac{p}{2} \left( 1 -z_0 - \frac{2}{d+1}\right) + \frac12 h(z_0) \\
  &=\frac12 h(z_0) = H\left(1 - \frac{2}{d+1}\right).
\end{align*}
\end{proof}
We have completed the proof of Theorem \ref{theo:FHomeo_proper}.
\end{proof}

\begin{remark}\label{rem:quotient}
The infinite twist Hamiltonian $F$, introduced above, generates a 1-parameter subgroup $\varphi^t_F$ of $\Homeo_0(\S^2, \omega)$.  It follows immediately from the above proof that $\varphi^t_F \notin \FHomeo(\S^2, \omega)$ for $t \neq 0$.  This yields an injective group homomorphism from the real line $\R$ into the quotient $\Homeo_0(\S^2, \omega)/\FHomeo(\S^2, \omega)$.  One can show that this injection is not a surjection; see \cite{Pol-Shel2}.  However, we have not been able to determine whether $\Homeo_0(\S^2, \omega)/\FHomeo(\S^2, \omega)$ is isomorphic to $\R$ as an abelian group.
\end{remark}

\begin{remark}\label{rem:genus_zero_surfaces}
The group of finite energy homeomorphisms $\FHomeo(\Sigma, \omega)$ can be defined on any surface $\Sigma$; it  forms a normal subgroup of $\overline{\Ham}_c(\Sigma, \omega)$, the group of Hamiltonian homeomorphisms of $(\Sigma, \omega)$ which coincide with the identity near the boundary, if $\partial \Sigma \neq \emptyset$.     Recall that a Hamiltonian homeomorphism is a  homeomorphism which can be written as the $C^0$ limit of Hamiltonian diffeomorphisms.  It is well-known that $\overline{\Ham}_c(\Sigma, \omega)$ coincides with the kernel of the mass-flow homomorphism of Fathi \cite{fathi}.

Suppose now that $(\Sigma, \omega)$ is any compact surface of genus $0$, with boundary,  and view it as  embedded into $(\S^2, \omega)$.  There is an inclusion
$\FHomeo(\Sigma, \omega) \subset \FHomeo(\S^2, \omega)$.  The infinite twist $\psi$ can be placed on $(\Sigma, \omega)$ and the fact that it is not a finite-energy homeomorphism of the sphere implies that $\psi \notin \FHomeo(\Sigma, \omega)$.  We conclude that $\overline{\Ham}_c(\Sigma, \omega)$ is not simple.  Moreover, as in the case of Corollary \ref{cor:non-perfect}, one can  conclude that $\overline{\Ham}_c(\Sigma, \omega)$ is not perfect either.  This answers a question of Fathi \cite[Appendix A.6]{fathi}, concerning the simplicity of the kernel of the mass-flow homomorphism, for compact genus-zero surfaces. 

We remark that the infinite twist and $\FHomeo$ can be defined on any symplectic manifold.  However, our methods for proving properness of finite-energy homeomorphisms are currently applicable to dimension two only.
\end{remark}

\subsection{Proof of Corollary \ref{non-perfectness-R2}}\label{sec:non-perfectness-R2}
We now give the proof of Corollary~\ref{non-perfectness-R2}.  

\begin{proof} We may assume without loss of generality that $\int_{\R^2}\Omega=1$. As alluded to in the introduction, by a version of Moser's argument for non-compact manifolds, due to Greene and Shiohama \cite{Greene-Shiohama}, there exists a smooth diffeomorphism $\psi:\R^2\to \S^2\setminus\{p\}$ such that $\psi^*\omega=\Omega$. Here $p$ denotes the North pole in $\S^2$. This gives rise to an injective group homomorphism $\Psi:\Diff(\R^2,\Omega)\to \Homeo_0(\S^2,\omega)$, defined for any $h\in \Diff(\R^2,\Omega)$ by $\Psi(h)(x):=\psi h \psi\inv(x)$ for $x\neq p$ and  $\Psi(h)(p)=p$. The image of $\Psi$ is the set of elements of $\Homeo_0(\S^2,\omega)$ that fix $p$ and are smooth in the complement of $p$.

In particular, the image of $\Psi$  contains an adapted infinite twist homeomorphism $\tau$, which we showed above is not in $\text{FHomeo}(\S^2,\omega).$  By the Epstein-Higman argument cited in the introduction, the commutator subgroup of $\text{Homeo}_0$ is contained in any non-trivial normal subgroup.  In particular, $\tau$ is not a product of commutators. This implies that $\Psi^{-1}(\tau)$ is not a product of commutators in $\Diff(\R^2,\Omega)$, which is therefore not perfect. 
\end{proof}

\begin{remark} The above argument similarly shows that $\Homeo(\R^2, \Omega)$, the group of area-presrving homeomorphisms of the plane, is not perfect if $\Omega$ has finite total area.  This holds more generally if $\Omega$ is only assumed to be a good measure and not necessarily a smooth area form; being good means that $\Omega$ is non-atomic and positive on non-empty open sets. In this case, one can repeat the above argument, using the classical Oxtoby-Ulam theorem \cite{Oxtoby-Ulam} instead of \cite{Greene-Shiohama}.  
\end{remark}

 \subsection{Remarks on Hofer's geometry}
 
We close by briefly discussing the large scale geometry of $\FHomeo$.

It is possible to define Hofer's distance for area-preserving homeomorphisms as follows.  Given $\varphi \in \FHomeo(\S^2, \omega)$, we define its Hofer distance from the identity by
\begin{equation}\label{eq:Hofer_distance_FHomeo}
\tilde{d}_H(\varphi, \id) := \liminf d_H(\varphi_i, \id), 
\end{equation}
where the infimum is taken over all sequences $\{\varphi_i\} \subset \Ham(\S^2, \omega)$ which converge uniformly to $\varphi$.  Define $\tilde{d}_H(\varphi,\psi) := \tilde{d}_H(\varphi^{-1} \psi,\id)$.  

We leave it to the reader to check that this defines a bi-invariant distance on $\FHomeo(\S^2, \omega)$.  

It is a natural question to try to better understand this space.  For example, one could ask if  $\FHomeo$ has infinite quasi-flat rank.  We strongly suspect that the answer is, in fact, positive as our tools are robust with respect to the $C^0$ topology and so one can adapt the proof of Theorem \ref{theo:QI-ker-Cal} to prove that $\FHomeo$  does have infinite quasi-flat rank.  Similarly, it can be shown that $\FHomeo$ is not coarsely proper.

One could define $\tilde d_H(\varphi, \id)$, via \eqref{eq:Hofer_distance_FHomeo}, for arbitrary $\varphi \in \Homeo_0(\S^2, \omega)$.  

If $\varphi$ is not a finite energy homeomorphism, i.e.\ if $\varphi \notin \FHomeo(\S^2, \omega)$, then we get $$\tilde d_H(\varphi, \id) = \infty.$$
Hence, we may view homeomorphisms which are not finite-energy as those which are infinitely far from diffeomorphisms, in Hofer's distance. This is the point of view expressed in Le Roux's article \cite[Question 1]{LeRoux-6Questions}. Theorem \ref{theo:FHomeo_proper} tells us that such homeomorphisms do exist.

A question which arises immediately as a consequence of our definition of $\tilde d_H$ is whether $\tilde d_H(\varphi, \psi)$ coincides with the usual Hofer distance $ d_H(\varphi, \psi)$ when $\varphi, \psi \in \Ham(\S^2, \omega)$.  We do not know the answer to this question.  Note that this is equivalent to asking if the (usual) Hofer distance is lower semi-continuous with respect to the $C^0$ topology; this was posed as an open question by Le Roux in  \cite{LeRoux-6Questions}.  

 \bibliographystyle{alpha}
 \bibliography{Kap-Pol}

{\small

\medskip
\noindent Dan Cristofaro-Gardiner\\
Mathematics Department \\ 
University of California, Santa Cruz \\
1156 High Street, Santa Cruz, California, USA\\
School of Mathematics \\
Institute for Advanced Study\\
1 Einstein Drive, Princeton, NJ, USA \\
{\it e-mail}: dcristof@ucsc.edu
\medskip

\medskip
\noindent Vincent Humili\`ere \\
\noindent Sorbonne Universit\'e and Universit\'e de Paris, CNRS, IMJ-PRG, F-75006 Paris, France\\
\noindent \& Institut Universitaire de France.\\
{\it e-mail:} vincent.humiliere@imj-prg.fr
\medskip

\medskip
 \noindent Sobhan Seyfaddini\\
\noindent Sorbonne Universit\'e and Universit\'e de Paris, CNRS, IMJ-PRG, F-75006 Paris, France.\\
 {\it e-mail:} sobhan.seyfaddini@imj-prg.fr

}

\end{document}